\title{Large scale geometry of metrisable groups}
\author {Christian Rosendal}
\address{Department of Mathematics, Statistics, and Computer Science (M/C 249)\\University of Illinois at Chicago\\851 S. Morgan St.\\Chicago, IL 60607-7045\\USA}
\email{rosendal.math@gmail.com}
\urladdr{http://homepages.math.uic.edu/$~$rosendal}
\date {}
\newcommand{\norm}[1]{\lVert#1\rVert}
\newcommand{\Norm}[1]{\big\lVert#1\big\rVert}
\newcommand{\NORM}[1]{\Big\lVert#1\Big\rVert}
\newcommand{\triple}[1]{|\!|\!|#1|\!|\!|}
\newcommand {\F}{\mathbb F}
\newcommand {\N}{\mathbb N}
\newcommand {\M}{\mathbb M}
\newcommand {\Q}{\mathbb Q}
\newcommand {\R}{\mathbb R}
\newcommand {\Z}{\mathbb Z}
\newcommand {\U}{\mathbb U}
\newcommand{\normal}{\trianglelefteq}
\newcommand{\eps}{\epsilon}
\newcommand{\tom} {\emptyset}
\newcommand{\saa}{\Rightarrow}
\newcommand{\til}{\rightarrow}
\newcommand{\Lim}[1]{\mathop{\longrightarrow}\limits_{#1}}
\newcommand {\Del}{ \; \Big| \;}
\newcommand {\del}{ \; \big| \;}
\newcommand {\ku} {\mathcal}
\newcommand{\ov}{\overline}
\newcommand{\inv}{^{-1}}
\newcommand {\e} {\exists}
\newtheorem{thm}{Theorem}
\newtheorem{cor}[thm]{Corollary}
\newtheorem{lemme}[thm]{Lemma}
\newtheorem{prop} [thm] {Proposition}
\newtheorem{defi} [thm] {Definition}
\newtheorem{obs}[thm] {Observation}
\newtheorem{prob}[thm]{Problem}
\theoremstyle{definition}
\newtheorem{exa}[thm]{Example}
\begin{document}

\keywords{Large scale geometry, Affine isometric actions on Banach spaces, Metrisable topological groups}
\thanks{This work was partially supported
by a grant from the Simons Foundation (\#229959 to Christian Rosendal) and likewise by  NSF grant DMS 1201295}

\begin{abstract}
We develop a theory of large scale geometry of metrisable topological groups that, in a significant number of cases, allows one to define and identify a unique quasi-isometry type intrinsic to the topological group. 
Moreover, this quasi-isometry type coincides with the classical notion in the case of compactly generated locally compact groups and, for the additive group of a Banach space, is simply that of the corresponding Banach space. In particular, we characterise the class of separable metrisable groups admitting {\em metrically proper}, respectively, {\em maximal} compatible left-invariant  metrics. Moreover, we develop criteria for when a metrisable group admits  metrically proper affine isometric actions on Banach spaces of various degress of convexity and reflexivity.

A further study of the large scale geometry of automorphism groups of countable first order model theoretical structures is separated into a companion paper.
\end{abstract}

\maketitle

\tableofcontents

\section{Introduction}
The large scale geometry of finitely generated discrete groups or compactly generated locally compact second countable groups is by now a well-established theory (see \cite{nowak, harpe} for recent accounts). In the finitely generated case, the starting point is the elementary observation that the word metrics $\rho_\Sigma$ on a discrete group $\Gamma$ given by finite symmetric generating sets $\Sigma\subseteq \Gamma$ are mutually quasi-isometric and thus any such metric may be said to define the large scale geometry of $\Gamma$. In the locally compact setting, matters have not progressed equally swiftly even though the basic tools have been available for quite some time. Indeed, by a result of R. Struble \cite{struble} dating back to 1951, every locally compact second countable group admits a compatible left-invariant {\em proper} metric, i.e., so that the closed balls are compact. Struble's theorem was based on an earlier well-known result due independently to G. Birkhoff \cite{birkhoff} and S. Kakutani \cite{kakutani} characterising the metrisable topological groups as the first countable topological groups and, moreover, stating that every such group admits a compatible left-invariant metric. However, as is evident from the construction underlying the Birkhoff--Kakutani theorem, if one begins with a compact symmetric generating set $\Sigma$ for a locally compact second countable group $G$, then one may obtain a compatible left-invariant metric $d$ that is quasi-isometric to the word metric $\rho_\Sigma$ induced by $\Sigma$. By applying the Baire category theorem and arguing as in the discrete case, one sees that any two such word-metrics $\rho_{\Sigma_1}$ and $\rho_{\Sigma_2}$ are quasi-isometric, which shows that the compatible left-invariant metric $d$ is uniquely defined up to quasi-isometry by this procedure.

However, thus far, it seems that no one has been able to identify a well-defined large scale geometry of metrisable topological groups beyond the locally compact. Largely, this may be  due to the presumed absence of canonical generating sets in general metrisable groups as opposed to  the finitely or compactly generated ones.

In the present paper, we offer a solution to this problem, which in many cases allows one to isolate and compute a canonically defined metric on a metrisable topological group $G$ and thus to identify a unique quasi-isometry type of $G$. Moreover, this quasi-isometry type satisfies the main characterics encountered in the finitely or compactly generated settings, namely (i) that it is a topological isomorphism invariant of $G$, (ii) that it is realised on $G$ by some compatible left-invariant metric and (iii) it is non-trivial, witnessed here by capturing all possible large scale behaviour of $G$.

The theory of metrisable topological groups,  in particular, Polish groups,  has seen tremendous progress in the last twenty years. For example, in the special case of non-Archimedean Polish groups, studies have been done of their topological dynamics and of questions of amenability \cite{kpt}, the structure of conjugacy classes and topological rigidity \cite{turbulence} and their representation theory \cite{tsankov}. Our goal here is to add another facet to the theory, by providing the framework for the introduction of more geometrical tools, connecting this with the structure of affine isometric actions on Banach spaces and also by studying in detail the special case of non-Archimedean Polish groups.

The basic idea of our paper is to replace the topological notion of compactness by a metric notion, which we term the {\em relative property (OB)}. Namely, a subset $A$ of a metrisable topological group $G$ is said to have {\em property (OB) relative to $G$} if $A$ has finite diameter with respect to every left-invariant metric on $G$. Similarly, $G$ is said to have {\em property (OB)} \cite{OB} if it has property (OB) relative to itself, i.e., if $G$ has finite diameter with respect to every compatible left-invariant metric. Whereas, by the theorem of Struble, the relative Property (OB) is equivalent to relative compactness in the case of locally compact second countable groups, this is far from being the case for general metrisable groups. Indeed, a surprising number of these have property (OB) without being compact, e.g., the unitary group $U(\ku H)$ with the strong operator topology \cite{ricard}, homeomorphism groups of spheres ${\rm Homeo}(S^n)$  and of the Hilbert cube ${\rm Homeo}([0,1]^\N)$ \cite{OB}, the group of measure-preserving automorphisms of a standard probability space ${\rm Aut}([0,1]^\N)$ with the weak topology \cite{glasner}, and, more generally, automorphism groups of separably-categorical metric structures \cite{OB}.

We then define a compatible left-invariant metric $d$ on $G$ to be {\em metrically proper} if every set of finite $d$-diameter set has property (OB) relative to $G$ and show that any two such metrics on $G$ will be coarsely equivalent. Moreover,  the existence of these metrics is characterised by the following theorem.
\begin{thm}
A separable metrisable topological group $G$ admits a metrically proper compatible left-invariant metric if and only if $G$ has the {\em local property (OB)}, i.e., there is a neighbourhood of the identity having property (OB) relative to $G$.
\end{thm}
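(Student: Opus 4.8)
The plan is to prove the two implications separately, with essentially all the work in the backward direction. For the forward implication, suppose $d$ is a metrically proper compatible left-invariant metric. The open unit ball $\{g\in G: d(1,g)<1\}$ is a neighbourhood of the identity of finite $d$-diameter, so by metric properness it has property (OB) relative to $G$; hence $G$ has the local property (OB). For the backward implication, fix a symmetric open neighbourhood $V$ of $1$ with property (OB) relative to $G$ (replacing the given neighbourhood by its intersection with its inverse), and fix, by the Birkhoff--Kakutani theorem, a compatible left-invariant metric $\partial\le 1$. The idea is to build a single left-invariant metric whose small balls coincide with those of $\partial$ (forcing compatibility) and whose large balls are trapped inside sets with property (OB) (forcing metric properness), by feeding a suitable two-sided chain of subsets into the metrisation construction underlying the Birkhoff--Kakutani theorem.

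First I would produce the large-scale exhaustion. Since $V$ is open, the subgroup $H=\langle V\rangle$ is open, so its cosets form a disjoint family of open sets; as $G$ is separable this family is countable, and I fix coset representatives $\{g_n\}_{n\in\N}$ with $g_0=1$. Setting $B_n=V\cup\{g_0^{\pm 1},\ldots,g_n^{\pm 1}\}$ and $A_n=B_n^{3^n}$, each $A_n$ is symmetric, contains $1$, and, being a finite product of sets having property (OB) relative to $G$ (namely $V$ and finitely many singletons), itself has property (OB) relative to $G$, using that this class is closed under finite products and subsets. The sequence increases, satisfies $A_n^3=B_n^{3^{n+1}}\subseteq B_{n+1}^{3^{n+1}}=A_{n+1}$, and exhausts $G$ because $V\cup\{g_n\}_n$ generates $G$. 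On the small-scale side I choose symmetric neighbourhoods $1\in V_n$ decreasing to $\{1\}$ and forming a basis at the identity, with $V_{n+1}^3\subseteq V_n$ and $V_1^3\subseteq V$ (possible by continuity of multiplication).

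Next I splice the two families into one increasing chain $(C_k)_{k\in\Z}$ by setting $C_k=A_k$ for $k\ge 0$ and $C_{-n}=V_n$ for $n\ge 1$; one checks $C_k^3\subseteq C_{k+1}$ throughout, with $\bigcap_k C_k=\{1\}$ and $\bigcup_k C_k=G$. Applying the Birkhoff--Kakutani metrisation procedure to $(C_k)$ yields a left-invariant pseudo-metric $d$ obeying a two-sided sandwich $C_k\subseteq\{g: d(1,g)\le 2^k\}\subseteq C_{k+1}$ up to a fixed index shift; since $\bigcap_k C_k=\{1\}$, this $d$ separates points and is a genuine metric. The lower part of the chain is the neighbourhood basis $(V_n)$, so the small $d$-balls are interleaved with the $V_n$ and $d$ is compatible. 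The upper part consists of the sets $A_n$, so every $d$-ball, and hence by left-invariance every set of finite $d$-diameter, lies in some $A_n$ and therefore has property (OB) relative to $G$; that is, $d$ is metrically proper.

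The main obstacle is the backward direction, and within it the simultaneous control of the two scales. A word metric built from $V$ alone is not compatible, while $\partial$ alone is not proper, so a naive length metric fails: allowing arbitrarily small $\partial$-steps inside $V$ lets one traverse the open subgroup $H$ at bounded cost, and $H$ need not have property (OB) relative to $G$. The point of routing through the Birkhoff--Kakutani chain is precisely that it pins the balls between consecutive $C_k$ \emph{two-sidedly}, which neutralises this ``many tiny steps'' phenomenon and keeps bounded sets inside the (OB) sets $A_n$. The technical heart is thus verifying the sandwich estimate and the closure of property (OB) under finite products, so that each $A_n$ genuinely has property (OB); separability enters only to supply countably many cosets of $H$, and hence the exhaustion of $G$ by such sets.
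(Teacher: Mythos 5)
Your proof is correct and follows essentially the same route as the paper: the forward direction via the unit ball, and the backward direction by feeding a two-sided chain into the Birkhoff--Kakutani construction, with the positive part an exhaustion of $G$ by relatively (OB) sets of the form (powers of $V$ times countably many group elements) and the negative part a neighbourhood basis. The only cosmetic difference is that you obtain the countable set of translators from coset representatives of the open subgroup $\langle V\rangle$, whereas the paper uses an increasing sequence of finite sets with dense union (which also keeps each $C_k$ open, a detail your singletons technically lose but which plays no role in the argument).
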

Though many familiar groups do have the local property (OB), there are coun\-terexamples such as the infinite direct product of groups without property (OB), e.g., $\Z^\N$.

While the metrically proper metrics, when they exist,  uniquely define the coarse equivalence class of a metrisable group $G$, they do not suffice to define its quasi-isometry class. For that, we isolate a more restrictive class of metrics. Note first that we may define an ordering on the class of compatible left-invariant metrics on $G$ by letting $\partial\lesssim d$  if there is a constant $K$ so that $\partial\leqslant K\cdot d+K$. We then define a metric $d$ to be {\em maximal} if it maximal with respect to this ordering. Note that any two maximal metrics are necessarily quasi-isometric. Moreover, they can be characterised as follows.
\begin{prop}
The following conditions are equivalent for  a metrically proper compatible left-invariant metric $d$ on a metrisable group $G$,
\begin{enumerate}
\item $d$ is maximal,
\item $(G,d)$ is large scale geodesic,
\item there is $\alpha>0$ so that $\Sigma=\{g\in G\del d(g,1)\leqslant \alpha\}$ generates $G$ and $d$ is quasi-isometric to the word metric $\rho_\Sigma$.
\end{enumerate}
\end{prop}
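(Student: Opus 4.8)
The plan is to prove the equivalences around the cycle $(1)\Rightarrow(2)\Rightarrow(3)\Rightarrow(1)$, noting that $(3)\Rightarrow(2)$ is in any case immediate since a word metric is large scale geodesic (a chain of unit steps realises $\rho_\Sigma$) and large scale geodesy is a quasi-isometry invariant. Throughout I write $\Sigma_\alpha=\{g\in G\del d(g,1)\leqslant\alpha\}$; as $d$ is left-invariant each $\Sigma_\alpha$ is symmetric and contains $1$, and as $d$ is metrically proper each $\Sigma_\alpha$ has property (OB) relative to $G$, having $d$-diameter $\leqslant 2\alpha$. The recurring computation is this: if $g=s_1\cdots s_n$ with $s_i\in\Sigma_\alpha$, then left-invariance and the triangle inequality give $d(g,1)\leqslant\sum_i d(s_1\cdots s_{i-1},s_1\cdots s_i)=\sum_i d(1,s_i)\leqslant n\alpha$, and the identical estimate with any left-invariant metric $\partial$ in place of $d$ gives $\partial(g,1)\leqslant n\cdot\mathrm{diam}_\partial(\Sigma_\alpha)$.

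For $(2)\Rightarrow(3)$, if $(G,d)$ is large scale geodesic with constant $K$ I set $\alpha=K$: for any $g$ a coarse geodesic $1=z_0,\ldots,z_n=g$ with $d(z_{i-1},z_i)\leqslant\alpha$ exhibits $g=\prod_i(z_{i-1}^{-1}z_i)$ as a product of elements of $\Sigma_\alpha$, so $\Sigma_\alpha$ generates $G$; moreover $\rho_{\Sigma_\alpha}(g,1)\leqslant n\lesssim d(g,1)$ by the geodesy bound on the number of steps, while the displayed computation gives $d\leqslant\alpha\,\rho_{\Sigma_\alpha}$, so the two are quasi-isometric. For $(3)\Rightarrow(1)$, let $\partial$ be any compatible left-invariant metric; property (OB) of $\Sigma_\alpha$ relative to $G$ makes $\mathrm{diam}_\partial(\Sigma_\alpha)$ finite, whence $\partial\leqslant\mathrm{diam}_\partial(\Sigma_\alpha)\cdot\rho_{\Sigma_\alpha}\lesssim d$ by the displayed computation and the quasi-isometry of (3); thus $d$ is maximal.

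The crux is $(1)\Rightarrow(2)$, which I would split in two. First I claim maximality forces some ball $\Sigma_\alpha$ to generate $G$. Suppose not; then the open — hence clopen — subgroups $\langle\Sigma_\alpha\rangle$, after passing to a cofinal subsequence of radii $a_0<a_1<\cdots$, form a strictly increasing exhaustion $H_0\subsetneq H_1\subsetneq\cdots$ of $G$ with $H_k=\langle\Sigma_{a_k}\rangle$. Define $n(g)=\min\{k\del g\in H_k\}$; since each $H_k$ is clopen, $n$ is locally constant, and $n(xy)\leqslant\max(n(x),n(y))$ with $n(x^{-1})=n(x)$. Hence $\delta(x,y)=\beta(n(x^{-1}y))$ is a left-invariant continuous pseudometric for any nondecreasing $\beta$ with $\beta(0)=0$, and $\partial=d+\delta$ is a compatible left-invariant metric — compatible because $\delta$ vanishes near the diagonal, as $H_0$ is a neighbourhood of $1$. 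Picking $g_k\in\Sigma_{a_k}\setminus H_{k-1}$, which is nonempty lest $H_k\subseteq H_{k-1}$, gives $d(g_k,1)\leqslant a_k$ while $n(g_k)=k$, so taking $\beta(k)=k\,a_k$ yields $\partial(g_k,1)\geqslant k\,a_k$, which eventually exceeds $Kd(g_k,1)+K\leqslant Ka_k+K$ for every $K$. Thus $\partial\not\lesssim d$, contradicting maximality.

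With a generating ball $\Sigma_\alpha$ secured, I would geodesify: set $\rho(x,y)=\inf\{\sum_i d(z_{i-1},z_i)\del x=z_0,\ldots,z_m=y,\ d(z_{i-1},z_i)\leqslant\alpha\}$. This is finite-valued since $\Sigma_\alpha$ generates, is left-invariant, agrees with $d$ whenever $d(x,y)\leqslant\alpha$ and is therefore compatible, satisfies $\rho\geqslant d$, and is large scale geodesic by the standard argument of merging consecutive short steps. Maximality of $d$ then gives $\rho\lesssim d$, so $\rho$ and $d$ are quasi-isometric, and large scale geodesy transfers back to $d$, establishing (2). The main obstacle is exactly the generation step: it is the only place where maximality must be converted into algebraic information about $G$, and the delicate point is to build a compatible metric that blows up off a ball yet stays equal to $d$ near the identity — precisely the role of the clopen-subgroup valuation $\delta$. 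The geodesification and the compatibility and large-scale-geodesy verifications for $\rho$ are then routine.
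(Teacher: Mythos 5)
Your proof is correct, and its core is the same as the paper's: both arguments turn maximality into the statement that some ball $\Sigma_\alpha$ generates $G$, by contradiction via the chain of proper clopen subgroups $\langle\Sigma_{a_k}\rangle$ and a compatible metric that grows faster than any affine function of $d$, and both then geodesify via $\rho(x,y)=\inf\sum_i d(z_{i-1},z_i)$ over $\alpha$-chains, which is precisely Lemma \ref{construction maximal metrics}. The differences are worth recording. You run the cycle $(1)\Rightarrow(2)\Rightarrow(3)\Rightarrow(1)$ where the paper runs $(3)\Rightarrow(2)\Rightarrow(1)\Rightarrow(3)$; in particular your $(3)\Rightarrow(1)$ is the direct estimate $\partial\leqslant{\rm diam}_\partial(\Sigma_\alpha)\cdot\rho_{\Sigma_\alpha}\lesssim d$, whereas the paper gets $(2)\Rightarrow(1)$ from the general fact that a bornologous map on a large scale geodesic space is Lipschitz for large distances (Lemma \ref{facts geodecity}); both work, yours being more self-contained, the paper's isolating a reusable lemma. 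Your witness of non-maximality, the valuation pseudometric $\delta=\beta\circ n$ added to $d$, replaces the paper's re-entry into the Birkhoff--Kakutani construction (Lemma \ref{birkhoff-kakutani}) applied to the chain $V_n=\langle B_n\rangle$; your version is more explicit, at the cost of checking by hand that $d+\delta$ is a compatible left-invariant metric, which you do correctly --- the ultrametric inequality $n(xy)\leqslant\max(n(x),n(y))$ and the clopenness of $H_0$ are exactly what is needed. One small repair is required in $(2)\Rightarrow(3)$: the inequality $\rho_{\Sigma_\alpha}(g,1)\leqslant n\lesssim d(g,1)$ does not follow from the definition of large scale geodesic as stated, since that definition bounds the total length $\sum_i d(z_{i-1},z_i)$ and not the number of steps (arbitrarily many tiny steps are permitted). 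You must first merge adjacent steps whose combined length is at most $K$ --- the same merging you invoke for the geodesification --- after which consecutive pairs of steps have combined length exceeding $K$, so that $n\leqslant 2\,d(g,1)+1$. With that inserted, every implication is complete.
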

Similarly to the locally compact case, their existence is characterised by $G$ having an appropriate generating set.
\begin{thm}
A separable metrisable group $G$ admits  a maximal compatible left-invariant metric $d$ if and only if 
$G$ is generated by an open set with property (OB) relative to $G$.
\end{thm}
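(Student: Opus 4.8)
The plan is to derive both implications from the Proposition together with the earlier characterisation of metric properness, so that the real work reduces to producing a single well-behaved metric. Throughout I read \emph{property (OB) relative to $G$} as finiteness of diameter against every \emph{continuous} left-invariant pseudometric, this being the reading under which the notion is non-trivial and under which it interacts with compatible metrics.

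For the forward implication, suppose $d$ is maximal. I would first argue that $d$ is automatically metrically proper, which is what makes the Proposition applicable. The point is that the ordering $\lesssim$ is upward directed: given any continuous left-invariant pseudometric $\partial$, the sum $d+\partial$ is again a compatible left-invariant metric and clearly $d\lesssim d+\partial$. Maximality then forces $d+\partial\lesssim d$ (otherwise $d+\partial$ would sit strictly above $d$), so $\partial\lesssim d$, i.e.\ $\partial\leqslant Kd+K$ for some $K$. Hence every set of finite $d$-diameter has finite $\partial$-diameter; as $\partial$ was arbitrary, every $d$-bounded set has property (OB) relative to $G$ and $d$ is metrically proper. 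Now implication $(1)\Rightarrow(3)$ of the Proposition supplies $\alpha>0$ with $\Sigma=\{g\in G\del d(g,1)\leqslant\alpha\}$ generating $G$, and the open ball $V=\{g\in G\del d(g,1)<\alpha+1\}$ contains $\Sigma$, hence generates $G$, is open by compatibility, and has finite $d$-diameter, so is (OB) relative to $G$ by metric properness.

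For the converse, suppose $G$ is generated by an open set $V$ with property (OB) relative to $G$. Then $V$, or a left translate of it, is an identity neighbourhood witnessing the local property (OB), so by the earlier Theorem $G$ carries a metrically proper compatible left-invariant metric $\rho$. Since $V$ is (OB) and $\rho$ is metrically proper, $V$ has finite $\rho$-diameter and lies within bounded $\rho$-distance of $1$; rescaling $\rho$ by a constant I may assume $V\subseteq\Sigma:=\{g\in G\del\rho(g,1)\leqslant1\}$, so the symmetric identity neighbourhood $\Sigma$ generates $G$. I then pass to the associated length (path) metric
\[
\widehat\rho(g,h)=\inf\Big\{\textstyle\sum_{i=1}^{n}\rho(x_{i-1},x_i)\del x_0=g,\ \ldots,\ x_n=h,\ \rho(x_{i-1},x_i)\leqslant1\ \text{for all }i\Big\},
\]
which is finite, symmetric and left-invariant because $\Sigma$ generates $G$ and the admissible-step relation is left-invariant and reversible. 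Three verifications finish the argument: (i) $\widehat\rho\geqslant\rho$ always, while $\widehat\rho(g,h)=\rho(g,h)$ whenever $\rho(g,h)\leqslant1$ (a single admissible step is optimal), so $\widehat\rho$ agrees with $\rho$ on the neighbourhood $\Sigma$ of $1$ and, being left-invariant, is compatible; (ii) $\widehat\rho\geqslant\rho$ forces every $\widehat\rho$-bounded set to be $\rho$-bounded, hence (OB), so $\widehat\rho$ is metrically proper; (iii) $\widehat\rho$ is large scale geodesic, since by construction any two points are joined by a chain of admissible steps of $\widehat\rho$-length $\leqslant1$ whose total length approximates $\widehat\rho$. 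Implication $(2)\Rightarrow(1)$ of the Proposition then shows $\widehat\rho$ is maximal.

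The crux is the converse: the metric furnished by the earlier Theorem is only guaranteed to be coarsely equivalent to a word metric, and the role of the length-metric construction is precisely to upgrade this to the large scale geodesic structure that, by the Proposition, characterises maximality, while the identity $\widehat\rho=\rho$ near $1$ is exactly what preserves compatibility. I expect verifying (iii) cleanly to be the main obstacle, since it is where tiny admissible steps must be shown not to destroy the comparison with the word metric. A secondary subtlety, already used above, is that \emph{maximal} a priori means only maximal and not greatest; the directedness argument in the forward implication is what promotes it to a genuine greatest element and renders the Proposition applicable in the first place.
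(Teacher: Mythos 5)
Your proof is correct and follows essentially the same route as the paper's: your length metric $\widehat\rho$ is precisely the metric of Lemma \ref{construction maximal metrics} (substitute $g_i=x_{i-1}^{-1}x_i$), the converse then concludes via Proposition \ref{char maximal metric} exactly as the paper does, and the forward direction likewise extracts a generating ball from that proposition. The only cosmetic deviations are your directedness argument upgrading ``maximal'' to ``greatest'' --- unnecessary under the paper's official definition, where Lipschitz for large distances into every compatible metric is already bornologous and hence, by Lemma \ref{metrically proper}, metrically proper --- and your use of condition (2) (large scale geodesic) rather than (3) of the proposition to certify maximality.
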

The {\em quasi-isometry type} of a metrisable group $G$, when it exists, is then that given by a maximal compatible left-invariant metric. A reassuring fact about this definition is that it is a conservative extension of the existing theory. Namely, as the relative property (OB) in a locally compact group coincides with relative compactness, one sees that our definition of the quasi-isometry type of a compactly generated locally compact group coincides with the classical one given in terms of word metrics for compact generating sets. But, moreover, as will be shown, if $(X,\norm\cdot)$ is a Banach space, then the norm-metric will be maximal on the underlying additive group $(X,+)$, whereby $(X,+)$ will have a well-defined quasi-isometry type, namely, that of $(X,\norm\cdot)$. Thus, nothing essentially new will be said on these two classes of objects.

With the aid of the Milnor--\v{S}varc theorem \cite{milnor, svarc} adapted to our setting, we are then capable of computing the quasi-isometry type of a number of different groups. A part from the groups with property (OB) that will be quasi-isometric to the one-point metric space, and the compactly generated locally compact groups and Banach spaces discussed above, we mention the following examples (some of which are studied in the companion paper \cite{large scale}).
\begin{itemize}
\item The groups of affine isometries of $X=L^p$ and $X=\ell^p$, $1<p<\infty$, equipped with the topology of pointwise convergence are quasi-isometric to the Banach space $X$.
\item The automorphism group ${\rm Aut}(\bf T)$ of the $\aleph_0$-regular tree $\bf T$ is quasi-iso\-metric to $T$. Similarly for the group ${\rm Aut}({\bf T},\mathfrak e)$ of automorphisms of $\bf T$ fixing a given  end $\mathfrak e$.
\item The isometry group ${\rm Isom}(\U)$ of the Urysohn metric space $\U$ is quasi-isometric to $\U$.
\end{itemize}
Since the Banach spaces $L^p$ and $\ell^p$, $1<p<\infty$, are known to all belong to distinct quasi-isometry classes, the same holds for their affine isometry groups, whereby, in particular, these latter cannot be isomorphic as topological groups.

The second part of the paper concerns the interplay between the large scale geometry of a group $G$ and its affine isometric actions on Banach spaces. Possibly the seminal result in this area is due to U. Haagerup \cite{haagerup}, who showed that the free group on finitely many generators admits a proper affine isometric action on Hilbert space, but the subject has since broadened to include less regular types of Banach spaces \cite{furman}. One motivation for this study is that embeddability into Banach spaces provide an eminent domain for the calibration of various geometric properties of metric spaces, e.g., in terms of the convexity properties of the Banach space. 

Our first result uses  a well-known construction due to R. F. Arens and J. Eells \cite{arens} to show that every metrisable group $G$ carrying a metrically proper compatible left-invariant metric admits a proper affine isometric action on some Banach space. We subsequently generalise a construction due to I. Aharoni, B. Maurey and B. S. Mityagin \cite{maurey}  to show that amenable metrisable groups admit metrically proper affine isometric actions on Hilbert space if and only if they have uniformly continuous coarse embeddings into Hilbert space.

Assuming a stronger version of amenability, namely, that the group $G$ is approximately compact, we use techniques originating in work of V. Pestov \cite{pestov} to show that $G$ admit metrically proper affine isometric actions on super-reflexive spaces, respectively on  spaces  with Rademacher type $p$ and cotype $q$, if and only if $G$ has a uniformly continuous coarse embedding into a space of the same kind.

Our final result highlights the difference with the theory for locally compact groups. Namely, N. Brown and E. Guentner \cite{BG} have shown that every finitely generated group admits a proper affine isometric action on a reflexive Banach space and this was generalised in \cite{haagerup-affine} to compactly generated locally compact groups. However, the generalisation to metrisable groups fails and thus reflexivity becomes a non-trivial restriction. Though we have not been able to determine exactly which metrisable groups act properly on reflexive spaces, a sufficient criterium is given by the following result.

\begin{thm}
Suppose $d$ is a metrically proper metric on $G$ and that, for all $\alpha>0$, there is a continuous weakly almost periodic function $\phi\in \ell^\infty(G)$  with $d$-bounded support so that $\phi\equiv 1$ on the ball $D_\alpha=\{g\in G\del d(g,1)\leqslant \alpha\}$.
Then $G$ admits a metrically proper continuous affine  isometric action on a reflexive Banach space.
\end{thm}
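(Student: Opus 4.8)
The plan is to build the reflexive Banach space directly from the weakly almost periodic (WAP) functions supplied in the hypothesis, using the classical correspondence between WAP functions and reflexive representations. The key structural input is a theorem, essentially due to W.\ F.\ Eberlein and refined by Shtern and others, asserting that a continuous bounded function $\phi$ on a topological group $G$ is weakly almost periodic if and only if it is a matrix coefficient of a continuous isometric representation of $G$ on a reflexive Banach space; that is, there are a reflexive space $E_\phi$, a continuous isometric linear representation $\pi_\phi\colon G\to \lin{E_\phi}$, a vector $\xi_\phi\in E_\phi$ and a functional $\eta_\phi\in E_\phi^*$ with $\phi(g)=\langle \pi_\phi(g)\xi_\phi,\eta_\phi\rangle$.

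The approach is then to promote these \emph{linear} representations into a single \emph{affine} isometric action whose orbit is metrically proper. First I would, for each $\alpha>0$, fix the WAP function $\phi_\alpha$ from the hypothesis, with $\phi_\alpha\equiv 1$ on $D_\alpha$ and $d$-bounded support, and realise it via a reflexive representation $(\pi_\alpha, E_\alpha, \xi_\alpha, \eta_\alpha)$ as above. The natural affine action attached to a linear representation $\pi$ with a vector $\xi$ is the cocycle $b(g)=\pi(g)\xi-\xi$, giving an affine isometric action $g\cdot v=\pi(g)v+b(g)$ on $E$. The growth of $\norm{b(g)}$ controls properness: one has $\norm{b(g)}^2$ comparable, through the matrix-coefficient formalism, to the defect $\phi(1)-\operatorname{Re}\phi(g)$ of the associated WAP function. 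Thus on $D_\alpha$, where $\phi_\alpha\equiv 1$, the cocycle $b_\alpha$ stays bounded, while off the $d$-bounded support of $\phi_\alpha$ the function vanishes and the cocycle norm is bounded below. To obtain a \emph{single} action that is proper rather than merely non-degenerate at one scale, I would take a countable sequence $\alpha_n\to\infty$, form the $\ell^2$- (or suitably weighted $\ell^p$-) direct sum $E=\bigl(\bigoplus_n E_{\alpha_n}\bigr)_{\ell^2}$ with the diagonal representation $\pi=\bigoplus_n \pi_{\alpha_n}$ and cocycle $b=\bigoplus_n c_n b_{\alpha_n}$ for appropriately chosen scalars $c_n$. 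Reflexivity is preserved under $\ell^2$-sums of reflexive spaces, so $E$ is reflexive; and by choosing the $c_n$ so the tails converge while the lower bounds accumulate, the combined cocycle satisfies $\norm{b(g)}\to\infty$ as $d(g,1)\to\infty$, which is precisely metric properness of the action.

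The main obstacle I expect is the quantitative matching between the analytic data (the WAP functions and their matrix-coefficient realisations) and the metric data (the metrically proper metric $d$). Specifically, I must control the lower bound on $\norm{b_\alpha(g)}$ \emph{uniformly} in terms of $d(g,1)$: the hypothesis only tells me $\phi_\alpha\equiv 1$ on $D_\alpha$ and that its support is $d$-bounded, but turning ``$\phi_\alpha(g)$ is far from $1$'' into a quantitative lower bound on the cocycle, and then summing these bounds over $n$ without the convergence-forcing scalars $c_n$ killing the divergence, requires a careful estimate. The natural strategy is to exploit that $d$ is metrically proper, so every $d$-bounded set has property (OB) relative to $G$ and conversely every scale is eventually dominated by some $D_{\alpha_n}$; this lets me arrange that for any $g$ with $d(g,1)$ large there is some index $n$ with $g$ outside the support of $\phi_{\alpha_n}$, forcing the corresponding summand of $b$ to be bounded below, while the weights $c_n$ keep $\norm{b(g)}$ finite for every fixed $g$. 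Verifying simultaneously that $b$ is a genuine (continuous) cocycle for the diagonal action, that $E$ is reflexive, and that the resulting properness is with respect to $d$ and not merely some weaker metric, is where the bulk of the technical work lies; the continuity of the action follows from continuity of each $\phi_\alpha$ and a standard uniform-on-compacta argument, but the properness estimate is the crux.
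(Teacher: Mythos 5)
Your general architecture is the right one and is in fact the one the paper uses: realise the weakly almost periodic data on reflexive spaces, form cocycles of the shape $\pi(g)\xi-\xi$, get lower bounds from the vanishing of $\phi$ off its $d$-bounded support, and assemble an $\ell^2$-sum. But there is a genuine gap at exactly the point you flag as the crux, and the fix you propose (convergence-forcing scalars $c_n$) cannot work. First, the asserted two-sided comparison of $\norm{b(g)}^2$ with the defect $\phi(1)-\operatorname{Re}\phi(g)$ is a Hilbert-space fact about \emph{positive definite} functions (where $\eta=\xi$ and $\norm{\pi(g)\xi-\xi}^2=2\phi(1)-2\operatorname{Re}\phi(g)$). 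For a matrix coefficient $\phi(g)=\langle\pi(g)\xi,\eta\rangle$ on a general reflexive space, with $\eta\in E^*$ unrelated to $\xi$, only the one-sided estimate $|\phi(1)-\phi(g)|\leqslant \norm{\eta}\cdot\norm{\pi(g)\xi-\xi}$ survives. That still gives the lower bound you need off the support, but it gives no upper bound at all on $D_\alpha$: the identity $\phi_\alpha\equiv 1$ on $D_\alpha$ only says $b_\alpha(g)$ lies in the kernel of one functional, not that it is small. Second, and fatally for the summation step, the only upper bound then available is the trivial uniform one $\norm{b_{\alpha_n}(g)}\leqslant 2\norm{\xi_{\alpha_n}}$, valid for \emph{all} $g$. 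Any weights $c_n$ chosen so that $\sum_n c_n^2\norm{b_{\alpha_n}(g)}^2$ converges via this bound make $\norm{b(g)}$ bounded uniformly in $g$, so the resulting action is never metrically proper. Properness requires an upper bound that \emph{decays in $n$ on the ball $D_n$}, so that the unweighted (or essentially unweighted) $\ell^2$-sum converges for each fixed $g$ while the lower bounds can still accumulate as $d(g,1)\til\infty$.

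The missing idea is a preliminary averaging step. The paper does not use the hypothesised functions directly: for each $n$ it builds a nested staircase $\psi_1,\dots,\psi_{4^n}$ of plateau functions at well-separated radii and sets $\phi_n=4^{-n}\sum_{i=1}^{4^n}\psi_i$, which forces the quantitative almost-invariance $\norm{\phi_n-\lambda(g)\phi_n}_\infty\leqslant 4^{-n}$ for all $g\in D_n$, while retaining $d$-bounded support and $\phi_n(1)=1$. It then realises each $\phi_n$ on a reflexive space via the Davis--Figiel--Johnson--Pe\l czy\'nski interpolation scheme applied to $W_n=\ov{\rm conv}(\lambda(G)\phi_n\cup-\lambda(G)\phi_n)$, with $\xi=\phi_n$ itself and cocycle $b_n(g)=\phi_n-\lambda(g)\phi_n$; the explicit gauges $\norm\cdot_{n,k}$ of the DFJP construction are what convert the sup-norm smallness $4^{-n}$ into the norm estimate $\triple{b_n(g)}_n\leqslant (1/\sqrt2)^{n-1}$ on $D_n$, alongside the lower bound $\triple{b_n(g)}_n\geqslant 1/2$ off $({\rm supp}\,\phi_n)\inv$. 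With these two estimates the unweighted $\ell^2$-sum is finite for every $g$ and tends to infinity with $d(g,1)$. Without the averaging and without working with an explicit realisation whose norm you can compare to $\norm\cdot_\infty$, the abstract matrix-coefficient realisation gives you no handle on the upper bounds, and the argument does not close.
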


The hypotheses of this theorem are verified whenever the metric $d$ is moreover {\em stable} in the sense of J.-L. Krivine and B. Maurey \cite{KM}, whence the following corollary.
\begin{cor}
Suppose a topological group $G$ carries a compatible left-invariant metrically proper stable metric. Then $G$ admits a metrically proper continuous affine isometric action on a reflexive Banach space.
\end{cor}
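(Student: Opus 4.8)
The plan is to deduce the corollary directly from the preceding theorem by verifying its hypotheses. Since the given metric $d$ is assumed metrically proper, it remains only to produce, for each $\alpha>0$, a continuous weakly almost periodic function $\phi\in\ell^\infty(G)$ with $d$-bounded support and $\phi\equiv 1$ on $D_\alpha=\{g\in G\del d(g,1)\leqslant\alpha\}$. The natural candidate is a radial bump: fix a continuous $\psi\colon[0,\infty)\to[0,1]$ with $\psi\equiv 1$ on $[0,\alpha]$ and $\psi\equiv 0$ on $[2\alpha,\infty)$ (say, piecewise linear), and set $\phi(g)=\psi\big(d(g,1)\big)$. As $d$ is compatible, $g\mapsto d(g,1)$ is continuous, so $\phi$ is continuous; moreover $\phi$ is bounded, its support is contained in $D_{2\alpha}$, and $\phi\equiv 1$ on $D_\alpha$. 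Everything thus reduces to showing that $\phi$ is weakly almost periodic, and this is where stability enters.

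First I would invoke Grothendieck's double-limit criterion: a bounded function $\phi$ on $G$ is weakly almost periodic if and only if, for all sequences $(s_n)$ and $(t_m)$ in $G$ for which the two iterated limits exist, one has $\lim_n\lim_m \phi(s_nt_m)=\lim_m\lim_n \phi(s_nt_m)$. Using left-invariance of $d$, rewrite $\phi(s_nt_m)=\psi\big(d(s_nt_m,1)\big)=\psi\big(d(t_m,s_n\inv)\big)$, so that the whole question concerns the commutation of the iterated limits of the quantities $\psi\big(d(t_m,s_n\inv)\big)$.

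The heart of the argument is a case analysis according to the $d$-boundedness of $(s_n\inv)$ and $(t_m)$; after passing to subsequences (which preserves the values of the iterated limits) I may assume $d(s_n\inv,1)\to s$ and $d(t_m,1)\to t$ in $[0,\infty]$. If both $s$ and $t$ are finite, then $(s_n\inv)$ and $(t_m)$ are bounded sequences, and stability of $d$ --- after refining the subsequences to ultrafilters --- gives that the iterated ultrafilter-limits of $d(t_m,s_n\inv)$ agree; since $\psi$ is uniformly continuous on the relevant bounded interval, the iterated limits of $\phi(s_nt_m)=\psi\big(d(t_m,s_n\inv)\big)$ agree as well. If instead at least one of $s,t$ is infinite, then the reverse triangle inequality $d(t_m,s_n\inv)\geqslant \big|d(t_m,1)-d(s_n\inv,1)\big|$ together with the inclusion $\operatorname{supp}\phi\subseteq D_{2\alpha}$ forces each relevant inner limit to vanish, so that both iterated limits equal $0$. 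In either case they coincide, and $\phi$ is weakly almost periodic.

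The main obstacle I anticipate is precisely the bounded case: establishing that stability of $d$, which a priori only asserts the commutation of iterated limits of the distance function $d(t_m,s_n\inv)$ for bounded sequences, transfers to the commutation of the iterated limits of the composed quantities $\phi(s_nt_m)=\psi\big(d(t_m,s_n\inv)\big)$. This requires care in matching the ordinary iterated limits appearing in Grothendieck's criterion with the ultrafilter limits in the definition of stability, and in invoking continuity of $\psi$ at the level of those ultrafilter limits. Once $\phi$ is known to be weakly almost periodic for every $\alpha>0$, the hypotheses of the preceding theorem are met, and it delivers the desired metrically proper continuous affine isometric action of $G$ on a reflexive Banach space.
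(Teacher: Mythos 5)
Your proposal is correct and follows essentially the same route as the paper: the paper likewise deduces the corollary from the preceding theorem by composing a piecewise-linear bump $\psi$ with $g\mapsto d(g,1)$ to produce, for each $\alpha$, a radial function that is $1$ on $D_\alpha$ and has $d$-bounded support. The only difference is that the paper merely asserts that this radial function is weakly almost periodic, whereas you supply the (correct) verification via Grothendieck's double-limit criterion, splitting into the bounded case (handled by stability plus continuity of $\psi$ under ultrafilter limits) and the unbounded case (handled by the reverse triangle inequality and the bounded support).
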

For a particular instance of this,  let us just mention a result of  \cite{large scale} stating that if $\bf M$ is a countable atomic model of a stable first-order theory $T$, whose automorphism group ${\rm Aut}(\bf M)$ has the local property (OB), then ${\rm Aut}(\bf M)$ admits a metrically proper stable metric.

The third and last part of our study is separated into a companion paper \cite{large scale}, which treats the subcase of non-Archimedean Polish groups, i.e., automorphism groups of countable first-order structures.  

\

\noindent {\bf Acknowledgement.} 
I wish to thank P. de la Harpe, W. B. Johnson and A. Pillay for helpful comments on and answers to the subject presented here.


\section{Basic theory}
\subsection{The relative property (OB)}
Let us begin by recalling that a {\em metrisable topological group} is a topological group $G$ whose topology can be induced by some metric. Thus, the metric is not part of the given. Also, while separability is occasionally an issue in our study, this is not so for {\em complete metrisability}, i.e., that the topology on $G$ can be given by a complete metric. In fact, for our purposes, $G$ can be considered to be equivalent with all of its dense subgroups, as long as these are given the topology induced from $G$.

As mentioned in the introduction, the following definition is central to the rest of the paper. It is a rather trivial generalisation of the concept of groups with property (OB), which was initially considered in \cite{OB} and also independently by Y. de Cornulier.

\begin{defi}
Let $A$ be a subset of a metrisable group $G$. We say that $A$ has {\em property (OB) relative to $G$} if $A$ has finite diameter with respect to every compatible left-invariant metric on $G$.
\end{defi}
Here (OB) is an abbreviation of {\em orbites born\'ees}, i.e., bounded orbits, which refers to the fact that, if $G\curvearrowright (X,d)$ is a continuous isometric action of $G$ on a metric space $(X,d)$ and $A$ has property (OB) relative to $G$, then $A\cdot x$ has finite diameter in $X$ for all $x\in X$. This follows from the fact that, if $\partial$ denotes a compatible left-invariant metric on $G$ (which exists by the Birkhoff--Kakutani metrisation theorem discussed below), then $D(g,f)=d(gx,fx)+\partial(g,f)$ is also compatible and left-invariant.

The following observation will be used repeatedly in the following.
\begin{lemme}\label{triangle ineq}
Let $d$ be a left-invariant metric on a group $G$ and let $A,B\subseteq G$ be subsets of finite $d$-diameter. Then also $A\cdot B=\{ab\del a\in A\;\&\; b\in B\}$ and $A\inv=\{a\inv \del a\in A\}$ have finite $d$-diameter.
\end{lemme}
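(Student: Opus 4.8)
The plan is to reduce everything to the associated length function $\ell(g)=d(g,1)$ and to exploit two consequences of left-invariance: that $\ell$ is symmetric and subadditive, and that a set of finite $d$-diameter must lie inside a bounded ball about the identity. (The cases where $A$ or $B$ is empty are trivial, so assume both nonempty.)

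First I would record the elementary properties of $\ell$. Left-invariance immediately gives $d(x,y)=d(1,x^{-1}y)=\ell(x^{-1}y)$. Applying left-invariance via left translation by $g$ to the pair $(1,g^{-1})$ yields $d(g,1)=d(1,g^{-1})$, which together with the symmetry of the metric shows $\ell(g^{-1})=d(1,g^{-1})=d(g,1)=\ell(g)$. Finally, the triangle inequality combined with left-invariance gives subadditivity, $\ell(gh)=d(1,gh)\leqslant d(1,g)+d(g,gh)=\ell(g)+\ell(h)$.

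The crux is the observation that a nonempty set of finite $d$-diameter lies in a bounded ball around $1$. Fixing $a_0\in A$, every $a\in A$ satisfies $\ell(a)=d(a,1)\leqslant d(a,a_0)+d(a_0,1)\leqslant \mathrm{diam}_d(A)+\ell(a_0)=:M_A<\infty$, and similarly $\sup_{b\in B}\ell(b)\leqslant M_B<\infty$. The two conclusions then drop out of symmetry and subadditivity: for $A^{-1}$ one has $\ell(a^{-1})=\ell(a)\leqslant M_A$, so $A^{-1}$ sits in the ball of radius $M_A$ about $1$ and hence $\mathrm{diam}_d(A^{-1})\leqslant 2M_A$; for $A\cdot B$ one has $\ell(ab)\leqslant \ell(a)+\ell(b)\leqslant M_A+M_B$, so $A\cdot B$ sits in the ball of radius $M_A+M_B$ and $\mathrm{diam}_d(A\cdot B)\leqslant 2(M_A+M_B)$.

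The step I expect to be the genuine (if small) obstacle is resisting the naive decomposition $d(ab,a'b')\leqslant d(ab,a'b)+d(a'b,a'b')$: the second term is $d(b,b')\leqslant\mathrm{diam}_d(B)$, but the first is a \emph{right} translate, which left-invariance does not fix, and rewriting it as $\ell\big(b^{-1}(a^{-1}a')b\big)$ reveals a conjugate whose length is not obviously controlled. Passing to $\ell$ avoids this entirely: subadditivity absorbs the conjugation, and it is precisely the observation that finite diameter forces finite $\ell$ on the whole set that keeps the resulting bound finite.
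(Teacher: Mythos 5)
Your proof is correct and is essentially the paper's own argument: the paper bounds $d(1,ab)\leqslant d(1,a)+d(a,ab)=d(1,a)+d(1,b)$ and $d(a^{-1},1)=d(aa^{-1},a)=d(1,a)$, which are exactly your subadditivity and symmetry of the length function $\ell(g)=d(g,1)$, just not given that name. The only difference is that you spell out the (elementary) step that finite diameter forces $\sup_{a\in A}d(1,a)<\infty$ via a fixed base point $a_0$, which the paper leaves implicit when it writes $\sup_{g\in A}d(1,g)$.
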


\begin{proof}
Just note that for $a\in A$ and $b\in B$,
$$
d(1,ab)\leqslant d(1,a)+d(a,ab)=d(1,a)+d(1,b)\leqslant \sup_{g\in A}d(1,g)+\sup_{f\in B}d(1,f)
$$
so the products $ab$ have a bounded distance to $1$ and $A\cdot B$ is $d$-bounded. Similarly, 
$d(a\inv,1)=d(a a\inv,a)=d(1,a)\leqslant \sup_{g\in A}d(1,g)$.
\end{proof}

It follows immediately from the Lemma \ref{triangle ineq} that the class of subsets having property (OB) relative to $G$ is an ideal, i.e., is hereditary and closed under taking finite unions, and, moreover, is closed under taking products and inverses of sets.

We remark also that, if $d$ is a compatible left-invariant metric on $G$, then $\partial(g,h)=d(g\inv,h\inv)$ defines a compatible right-invariant metric on $G$ and vice versa. Using this observation and Lemma \ref{triangle ineq}, we see that a subset $A$ has finite diameter with respect every compatible {\em left}-invariant metric if and only if $A\inv$ has finite diameter with respect every compatible {\em left}-invariant metric, which again is equivalent to $A$ having finite diameter with respect every compatible {\em right}-invariant metric. It follows that property (OB) relative to $G$ is equivalently defined in terms of boundedness with respect to compatible right-invariant metrics on $G$.

The fundamental result on metrisability of topological groups is the metrisation theorem of G. Birkhoff \cite{birkhoff} and S. Kakutani \cite{ kakutani} stating that a Hausdorff topological group admits a compatible left-invariant metric if and only if it is first countable (henceforth all topological groups considered will be supposed  Hausdorff). In particular, every metrisable topological group admits a compatible left-invariant metric. However, not every metrisable group admits a compatible two-sided invariant metric. The class of those that do was determined by V. Klee \cite{klee} as the so called {\em SIN groups} (for {\em small invariant neighbourhoods}), namely those that furthermore admit a neighbourhood basis at $1$ consisting of conjugacy invariant sets. 

The underlying fundamental fact for Birkhoff's proof of the  metrisation theorem is the following lemma.

\begin{lemme}\label{birkhoff-kakutani} 
Let $G$ be a topological group and $(V_n)_{n\in\Z}$
a neighbourhood basis at the identity consisting of open sets satisfying, for all $n\in \Z$,
\begin{enumerate}
\item $V_n=V_n\inv$,
\item $G=\bigcup_{n\in \Z}V_n$,
\item $V_n^3\subseteq V_{n+1}$.
\end{enumerate}
Define $\delta(g_1,g_2)=\inf\big(2^n\del g_2\inv g_1\in V_n\big)$ and put
$$
d(g_1,g_2)=\inf \Big(\sum_{i=0}^{k-1}\delta(h_i,h_{i+1})\del h_0=g_1,
h_k=g_2\Big).
$$
Then
$$
\delta(g_1,g_2)\leqslant 2\cdot d(g_1,g_2)\leqslant 2\cdot\delta(g_1,g_2)
$$
and $d$ is a compatible left-invariant metric on $G$.
\end{lemme}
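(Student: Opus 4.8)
The plan is to dispose of the formal properties of $\delta$ and $d$ first, reduce everything to the single substantive inequality $\delta\leqslant 2d$, and then prove that inequality by an induction on chain length engineered around the hypothesis $V_n^3\subseteq V_{n+1}$. First I would record the features of $\delta$. Since each $V_n$ is symmetric, $g_2\inv g_1\in V_n$ holds iff $g_1\inv g_2\in V_n$, so $\delta(g_1,g_2)=\delta(g_2,g_1)$; and since $\delta$ depends only on $g_2\inv g_1$, it is left-invariant. As $1\in V_n$ for all $n$ and $(V_n)$ is a neighbourhood basis in the Hausdorff group $G$, we have $\bigcap_n V_n=\{1\}$, whence $\delta(g_1,g_2)=0$ exactly when $g_1=g_2$. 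From the definition of $d$ as an infimum over chains, $d$ then inherits nonnegativity, symmetry (reverse a chain), left-invariance (translate a chain) and the triangle inequality (concatenate two chains); taking the length-one chain $h_0=g_1,\,h_1=g_2$ gives $d\leqslant\delta$, which is precisely the right-hand inequality $2d\leqslant 2\delta$.

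The heart of the matter is the left-hand inequality $\delta\leqslant 2d$, equivalently that $\delta(g_1,g_2)\leqslant 2\sum_{i=0}^{k-1}\delta(h_i,h_{i+1})$ for every chain $g_1=h_0,\ldots,h_k=g_2$. I would prove this by strong induction on $k$, writing $S$ for the right-hand sum. The cases $k\leqslant 1$ are immediate, and $S=0$ forces all $h_i$ equal. For $k\geqslant 2$ and $S>0$, choose the largest index $m$ with $\sum_{i<m}\delta(h_i,h_{i+1})\leqslant S/2$; by maximality the tail $\sum_{i>m}\delta(h_i,h_{i+1})$ is then also at most $S/2$, while the middle step satisfies $\delta(h_m,h_{m+1})\leqslant S$. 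Since both the prefix $h_0,\ldots,h_m$ and the suffix $h_{m+1},\ldots,h_k$ are chains of length strictly less than $k$, the induction hypothesis yields $\delta(h_0,h_m)\leqslant S$ and $\delta(h_{m+1},h_k)\leqslant S$.

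Now comes the step I expect to be the crux, where the cube condition enters and where the factor $2$ (rather than a larger constant) is won. Picking the integer $n$ with $2^n\leqslant S<2^{n+1}$, each of the three estimates $\delta(h_0,h_m),\,\delta(h_m,h_{m+1}),\,\delta(h_{m+1},h_k)<2^{n+1}$ places the corresponding group element $h_m\inv h_0,\;h_{m+1}\inv h_m,\;h_k\inv h_{m+1}$ inside $V_n$; multiplying them in the order that telescopes gives $h_k\inv h_0\in V_n\cdot V_n\cdot V_n=V_n^3\subseteq V_{n+1}$, so $\delta(g_1,g_2)\leqslant 2^{n+1}\leqslant 2S$. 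The threefold split into prefix, middle step and suffix is exactly matched to the three factors of $V_n^3$, and this bookkeeping is the main obstacle: it is what forces the choice of the $S/2$ threshold and what keeps the multiplicative loss down to $2$.

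Finally I would assemble the conclusions. The sandwich $\tfrac12\delta\leqslant d\leqslant\delta$ shows $d(g_1,g_2)=0\Rightarrow\delta(g_1,g_2)=0\Rightarrow g_1=g_2$, so $d$ is a genuine left-invariant metric. For compatibility, the inclusions $\{g\del\delta(1,g)<2^n\}\subseteq V_{n-1}$ and $V_n\subseteq\{g\del\delta(1,g)\leqslant 2^n\}$ (using $V_n=V_n\inv$ and monotonicity of the basis) show that the $\delta$-balls at the identity interleave with the given basis $(V_n)$; combining this with the same sandwich, the $d$-balls at $1$ form a neighbourhood basis at the identity, and left-invariance of $d$ propagates this to every point, so $d$ induces the original topology of $G$.
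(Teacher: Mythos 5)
Your proof is correct. The paper itself states this lemma without proof (citing Birkhoff and Kakutani), and your argument — reducing to $\delta\leqslant 2\sum_i\delta(h_i,h_{i+1})$ and proving it by induction on chain length, splitting the chain at the last index where the prefix sum is at most $S/2$ so that prefix, middle step and suffix each land in $V_n$ and their product in $V_n^3\subseteq V_{n+1}$ — is exactly the classical chaining argument the paper is invoking.
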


Nevertheless, the Birkhoff construction will, in general, be too crude for our purposes, as the balls $B_\eps=\{g\in G\del d(g,1)\leqslant \eps\}$ increase too fast with $\eps\til \infty$ and, similarly, decrease to fast with $\eps\til 0_+$. We shall mention that, at least for small distances, the more elaborate construction of Kakutani gives a tighter control on the behaviour of the metric. 

Reprising this construction, R. Struble \cite{struble} showed that any metrisable (i.e., second countable) locally compact group admits a compatible left-invariant {\em proper} metric, i.e., so that all bounded sets are relatively compact. To do this, one can simply choose the $V_n$ of Lemma \ref{birkhoff-kakutani} to be relatively compact. 

However, it is not possible to combine the results of Klee and Struble to conclude that a locally compact second countable SIN group admits a two-sided invariant proper metric. For a simple counter-example, take a countable discrete group $\Gamma$ with an infinite conjugacy class $\ku C\subseteq \Gamma$. Then, if $d$ is a compatible two-sided invariant metric on $\Gamma$, pick $g\in \ku C$ and note that $d(fgf\inv,1)=d(g,1)$ for all $f\in \Gamma$, i.e., $\ku C$ lies on a sphere around $1$, showing that $d$ cannot be proper.

Finally, S. Kakutani and K. Kodaira \cite{kodaira} showed that if $G$ is locally compact, $\sigma$-compact, then for any sequence $U_n$ of neighbourhoods of the identity there is a compact normal subgroup $K\subseteq \bigcap_nU_n$ so that $G/K$ is metrisable.

From Struble's theorem, the following observation is immediate.
\begin{lemme}
A subset $A$ of a locally compact second countable group $G$ has property (OB) relative to $G$ if and only if $A$ is relatively compact in $G$.
\end{lemme}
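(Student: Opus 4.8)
The plan is to derive the statement as an immediate corollary of Struble's theorem, treating the two implications separately; only one of them will actually require Struble, the other being a soft consequence of continuity.

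First I would prove that relative compactness implies property (OB) relative to $G$. Let $A$ be relatively compact and let $d$ be an arbitrary compatible left-invariant metric on $G$. Since $d$ is compatible, the map $g\mapsto d(g,1)$ is continuous, hence bounded on the compact set $\overline{A}$; say $d(g,1)\leqslant M$ for all $g\in \overline{A}$. By left-invariance and the triangle inequality, $d(g,h)\leqslant d(g,1)+d(1,h)\leqslant 2M$ for all $g,h\in A$, so $A$ has finite $d$-diameter. As $d$ was arbitrary, $A$ has property (OB) relative to $G$. This direction uses nothing beyond continuity of the metric and does not appeal to local compactness at all.

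For the converse, suppose $A$ has property (OB) relative to $G$. Here I would invoke Struble's theorem to fix a compatible left-invariant metric $d$ that is moreover \emph{proper}, i.e., all $d$-bounded sets are relatively compact. By the defining property of (OB) relative to $G$, the set $A$ has finite diameter with respect to this particular $d$, and in particular $A$ is $d$-bounded. Properness of $d$ then yields that $A$ is relatively compact, which is exactly what is claimed.

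The only substantive ingredient is Struble's theorem, quoted above, which supplies the proper metric needed for the harder implication; granting that, both directions are entirely routine, so I do not anticipate any genuine obstacle. The one point meriting a moment's care is the role of the quantifier in the definition: property (OB) relative to $G$ refers to \emph{every} compatible left-invariant metric, and this cuts in opposite ways in the two directions — in the converse it lets us test against the single proper metric produced by Struble, whereas in the forward direction it is precisely the universal statement that must be checked, which is why continuity of an arbitrary $d$ is invoked there.
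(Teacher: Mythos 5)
Your proof is correct and follows exactly the route the paper intends: the paper gives no written proof, simply noting that the lemma is immediate from Struble's theorem, and your two directions (continuity of $g\mapsto d(g,1)$ on the compact closure for one implication, testing property (OB) against the single proper metric supplied by Struble for the other) are precisely that argument spelled out.
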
 

For non-locally compact $G$, the situation is somewhat more interesting.

\begin{lemme}\label{main OB}
Suppose $G$ is a separable metrisable group and $U$ is a symmetric neighbourhood of $1$. Then there is a compatible left-invariant metric $d$ on $G$ so that the following are equivalent for all subsets $A\subseteq G$,
\begin{enumerate}
\item $A$ has finite $d$-diameter,
\item there are a finite subset $F\subseteq G$ and some $k\geqslant 1$ so that $A\subseteq (FU)^k$.
\end{enumerate}
\end{lemme}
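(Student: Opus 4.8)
My plan is to recognise the family $\ku I=\{A\subseteq G\del \e F\text{ finite},\ \e k\geqslant 1,\ A\subseteq (FU)^k\}$ as an \emph{ideal} of subsets, and then to realise $\ku I$ as the bounded sets of a compatible left-invariant metric via the Birkhoff--Kakutani construction of Lemma \ref{birkhoff-kakutani}. First I would record the algebra of $\ku I$: arguing as in Lemma \ref{triangle ineq}, $\ku I$ is hereditary, closed under finite unions (using $1\in F$ to make the exponent monotone in $k$), and --- the facts I actually need --- closed under products and inverses, with all finite sets, all of $U$, and all powers $U^m$ lying in $\ku I$ (the absorption $(FU^m)^k\subseteq((F\cup\{1\})U)^{mk}$ being the one computation worth doing). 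Since condition (2) is exactly ``$A\in\ku I$'', the whole statement reduces to producing a compatible left-invariant metric whose bounded sets are precisely $\ku I$.

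The heart is to build a two-sided sequence $(V_n)_{n\in\Z}$ of open symmetric neighbourhoods of $1$ satisfying the three hypotheses of Lemma \ref{birkhoff-kakutani}, in two regimes. For $n<0$ I would use first countability of the metrisable group $G$ to take a decreasing neighbourhood basis at $1$ with $V_n^3\subseteq V_{n+1}$ and $V_{-1}^3\subseteq V_0$; this standard small-scale part is what makes the resulting $d$ \emph{compatible}. For $n\geqslant 0$ I would build an increasing chain inside $\ku I$ that is cofinal in $\ku I$. Set $V_0=\mathrm{int}(U)$, fix a countable dense set $\{g_n\}_{n\in\N}$ (here separability enters), and define $V_{n+1}$ as the symmetrised, ``fattened'' union of $V_n^3$ with $g_n^{\pm1}V_0\cup V_0g_n^{\pm1}$ (and, at stage $1$, also $UV_0\cup V_0U$, so that $U\subseteq V_1$). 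Each $V_{n+1}$ is then open (translates and products of opens), symmetric, contains $V_n^3$, and lies in $\ku I$, being a finite union of products of members of $\ku I$. Since $V_n^3\subseteq V_{n+1}$ and each $V_n$ is symmetric, $\bigcup_{n\geqslant 0}V_n$ is an open --- hence closed --- subgroup containing the dense set $\{g_n\}$, so it equals $G$; together with the small-scale tail this gives $G=\bigcup_{n\in\Z}V_n$.

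Cofinality of $(V_n)_{n\geqslant 0}$ in $\ku I$ is then a short chase: given $(FU)^k$, each $f\in F$ lies in some $V_m$ and $U\subseteq V_1$, so $FU\subseteq V_M^2\subseteq V_{M+1}$ for $M$ large, and iterating the cube condition in the form $V_n^{3^j}\subseteq V_{n+j}$ (using $1\in V_n$ to pass from $k$-th to $3^j$-th powers) yields $(FU)^k\subseteq V_N$ for a single $N$. I then apply Lemma \ref{birkhoff-kakutani} to get $d$, compatible and left-invariant, with $\delta/2\leqslant d\leqslant\delta$ where $\delta(1,g)=\inf(2^n\del g\in V_n)$. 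For (2)$\Rightarrow$(1): if $A\subseteq(FU)^k\subseteq V_N$, then $a\inv b\in V_N^2\subseteq V_{N+1}$ for $a,b\in A$, so $\operatorname{diam}_dA\leqslant 2^{N+1}$. For (1)$\Rightarrow$(2): finite $d$-diameter forces $A\inv A\subseteq V_n$ for some $n$, and translating by a fixed $a_0\in A$ gives $A\subseteq a_0V_n\subseteq V_mV_n\subseteq V_{\max(m,n)+1}\in\ku I$.

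The main obstacle I anticipate is not any single estimate but the \emph{simultaneous} fulfilment of all constraints on $(V_n)$: openness, symmetry, the cube relation, membership in $\ku I$, and cofinality in $\ku I$, while the negative tail independently pins down the topology. Keeping the non-negative $V_n$ open and symmetric \emph{without} leaving $\ku I$ is exactly what forces the ``fatten each new generator by $V_0$ and symmetrise'' device, and verifying $\bigcup_nV_n=G$ (rather than merely a dense subgroup) is where separability and the open-subgroup argument are essential --- this is the step I would be most careful to get right.
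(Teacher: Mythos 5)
Your proposal is correct and follows essentially the same route as the paper: both feed a two-sided chain $(V_n)_{n\in\Z}$ into Lemma \ref{birkhoff-kakutani}, with the negative part a shrinking neighbourhood basis guaranteeing compatibility and the positive part built from a countable dense set together with $U$ so that the $V_n$, $n\geqslant 0$, exhaust exactly the sets covered by some $(FU)^k$. The only difference is cosmetic --- the paper takes the closed form $V_n=(F_nUF_n)^{3^n}$ for an increasing sequence of finite sets $F_n$ with dense union, whereas you build the chain recursively inside the ideal $\ku I$ --- and your verification of the two implications matches the paper's.
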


\begin{proof}
Let $F_0=\{1\}\subseteq  F_1\subseteq F_2\subseteq F_3\subseteq \ldots \subseteq G$ be an increasing sequence of symmetric finite sets whose union $\bigcup_n F_n$ is dense in $G$.
Define $V_n=(F_nUF_n)^{3^n}$ and note that each $V_n$ is symmetric with 
$$
V_n^3=\big((F_nV_0F_n)^{3^n}\big)^3\subseteq (F_{n+1}V_0F_{n+1})^{3^{n+1}}=V_{n+1}.
$$
Define also a neighbourhood basis
$$
V_0\supseteq V_{-1}\supseteq V_{-2}\supseteq \ldots \ni 1
$$
at $1$ consisting of symmetric open sets with $V_n^3\subseteq V_{n+1}$ for all $n$. Note that since $\bigcup_nF_n$ is dense in $G$ and $V_0=U$ is non-empty open, we have $G=\bigcup_{n\in \Z} V_n$.

Let now $d$ denote the compatible left-invariant metric defined from $(V_n)_{n\in \Z}$ via Lemma \ref{birkhoff-kakutani}. Then, $U$ has finite $d$-diameter, since, by Lemma \ref{birkhoff-kakutani},  $d(g,1)\leqslant \delta(g,1)\leqslant 2^0=1$ for all $g\in U=V_0$. It follows, by Lemma \ref{triangle ineq}, that $(FU)^k$ has finite $d$-diameter for all finite $F\subseteq G$ and $k\geqslant 1$, which shows (2)$\saa$(1).

For (1)$\saa$(2), simply note that if $A\subseteq G$ has finite $d$-diameter, then, by Lemma \ref{birkhoff-kakutani}, we have $A\subseteq V_n=(F_nUF_n)^{3^n}$ for some $n\geqslant 1$. Since $F_n$ is finite, the result follows.
\end{proof}

\begin{lemme}\label{char of rel OB}
The following are equivalent for a subset $A$ of a separable metrisable group $G$,
\begin{enumerate}
\item $A$ has property (OB) relative to $G$, 
\item for every open $V\ni 1$ there are a finite subset $F\subseteq G$ and some $k\geqslant 1$ so that $A\subseteq (FV)^k$.
\end{enumerate}
\end{lemme}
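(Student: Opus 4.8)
The plan is to prove the two implications separately, leaning on Lemma \ref{main OB} for the implication (1)$\saa$(2) and on the elementary Lemma \ref{triangle ineq} for its converse. The point to keep in mind throughout is that property (OB) relative to $G$ is a statement about \emph{all} compatible left-invariant metrics at once, whereas condition (2) is a purely combinatorial covering condition; the bridge between the two is provided by the tailor-made metrics furnished by Lemma \ref{main OB}.

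For (2)$\saa$(1), I would fix an arbitrary compatible left-invariant metric $d$ on $G$ and show that $A$ has finite $d$-diameter. The natural test neighbourhood is the open ball $V=\{g\in G\del d(g,1)<1\}$, which contains $1$ and is open by compatibility of $d$. Applying (2) to this $V$ produces a finite set $F\subseteq G$ and some $k\geqslant 1$ with $A\subseteq (FV)^k$. Now $V$ has $d$-diameter at most $2$ by the triangle inequality, and $F$, being finite, certainly has finite $d$-diameter; so Lemma \ref{triangle ineq} gives that $FV$, and hence $(FV)^k$, has finite $d$-diameter. Thus $A$ is $d$-bounded, and since $d$ was arbitrary, $A$ has property (OB) relative to $G$.

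For (1)$\saa$(2), I would fix an arbitrary open $V\ni 1$ and aim to produce the covering. Since Lemma \ref{main OB} is stated for a \emph{symmetric} neighbourhood, the first step is to pass to $U=V\cap V\inv$, which is a symmetric open neighbourhood of $1$ contained in $V$. Lemma \ref{main OB} then supplies a single compatible left-invariant metric $d$ for which finite $d$-diameter is \emph{equivalent} to being contained in some $(FU)^k$. Because $A$ has property (OB) relative to $G$, it is in particular $d$-bounded for this specific $d$; feeding this into the equivalence of Lemma \ref{main OB} yields a finite $F\subseteq G$ and $k\geqslant 1$ with $A\subseteq (FU)^k\subseteq (FV)^k$, as desired.

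Since all the real work is already encapsulated in Lemma \ref{main OB}, I do not expect a serious obstacle here; the only subtlety is the routine passage from the given open $V$ to a symmetric $U\subseteq V$ so that Lemma \ref{main OB} applies, together with the crucial observation that, to verify property (OB), one need only test the single metric produced by that lemma rather than all compatible left-invariant metrics.
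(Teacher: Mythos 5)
Your proposal is correct and follows essentially the same route as the paper: the forward implication via the metric of Lemma \ref{main OB} built from $U=V\cap V\inv$, and the converse by testing against the unit ball of an arbitrary compatible left-invariant metric and invoking Lemma \ref{triangle ineq}. No gaps.
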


\begin{proof}
(1)$\saa$(2): Suppose $A$ has property (OB) relative to $G$ and that $V\ni1$ is open. Then $A$ has finite diameter with respect to the metric $d$ constructed in Lemma \ref{main OB} from $U=V\cap V\inv$ and thus, by (1)$\saa$(2) of the same lemma, we have that $A\subseteq (FU)^k\subseteq (FV)^k$ for some finite $F\subseteq G$ and $k\geqslant 1$. 

(2)$\saa$(1): Suppose that (2) holds and that $d$ is a continuous left-invariant metric. Then the open ball $V=\{g\in G\del d(g,1)<1\}$ is a neighbourhood of $1$ and so $A\subseteq (FV)^k$ for some finute $F\subseteq G$ and $k\geqslant 1$. By Lemma \ref{triangle ineq}, $A$ must have finite $d$-diameter.
\end{proof}

We remark that Lemmas \ref{main OB} and  \ref{char of rel OB} may fail when $G$ is no longer required to be separable. To see this, let ${\rm Sym}(\N)$ denote the group of all (not necessarily finitely supported) permutations of $\N$ and equip it with the discrete topology. Then $U=\{1\}$ is a symmetric neighbourhood of the identity and thus ${\rm Sym}(\N)\not\subseteq (FU)^k=F^k$ for all finite $F\subseteq {\rm Sym}(\N)$ and $k\geqslant 1$. Nevertheless, it follows from the main result of G. M. Bergman's paper \cite{bergman} that every left-invariant metric on ${\rm Sym}(\N)$ has bounded diameter and hence ${\rm Sym}(\N)$ has property (OB) relative to itself.

In a general metrisable (as opposed to locally compact) group, the class of relatively compact sets may be very small and not shed much light on the large or even small scale geometry of the group. In the course of our study we shall see that, for metric purposes, the class of subsets with property (OB) relative to $G$ largely replaces the relatively compact sets to the extent that the analogy is occasionally almost trivial.

\begin{defi}
Let $(g_n)$ be a sequence in a metrisable group $G$. We say that $(g_n)$ {\em tends to infinity} and write $g_n\til \infty$ if there is a compatible left-invariant metric $d$ on $G$ so that $d(g_n,1)\til \infty$. 
\end{defi}
Thus,  a subset $A\subseteq G$ has property (OB) relative to $G$ if and only if there is no sequence $(g_n) \subseteq A$ so that $g_n\til \infty$.

Also, in a locally compact second countable group $G$, one has $g_n\til \infty$ if and only if $(g_n)$ eventually leaves every compact set or, equivalently, eventually leaves every set with property (OB) relative to $G$. This, of course, corresponds to the standard use of the notation $g_n\til \infty$.

Similarly, in a metrisable group $G$, we note that, if $g_n\til \infty$, then $(g_n)$ has no subsequence with property (OB) relative to $G$ or, equivalently, that $(g_n)$ eventually leaves every set with property (OB) relative to $G$. It is tempting to believe that this is actually a characterisation of $g_n\til \infty$. Though, as noted above,  this is indeed the case in a locally compact second countable group, we shall  see that this is not true in general and, e.g., fails in the product $\Gamma_1\times \Gamma_2\times\ldots$ of infinite discrete groups.

\begin{prop}\label{products}
Let $H=G_1\times G_2\times\ldots$ be the product of  metrisable groups $G_n$.  Then
\begin{enumerate}
\item a subset $A\subseteq H$ has property (OB) relative to $H$ if and only if $A$ is contained in a product $B_1\times B_2\times \ldots$ of subsets $B_i\subseteq G_i$ with property (OB) relative to $G_i$,
\item a sequence $(h_n)$ in $H$ satisfies $h_n\til \infty$ if and only if there is some $k$ so that 
$$
{\rm proj}_{G_1\times \ldots\times G_k}(h_n)\Lim{n\til \infty}\infty
$$
in $G_1\times \ldots\times G_k$.
\end{enumerate}
\end{prop}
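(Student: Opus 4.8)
My plan is to treat the two parts separately, relying on Lemma \ref{char of rel OB} for part (1) and on a direct truncation argument for part (2). Throughout, write $P_k=G_1\times\dots\times G_k$ for the finite head product, let ${\rm proj}_{P_k}\colon H\to P_k$ and ${\rm proj}_{G_i}\colon H\to G_i$ be the canonical projections, and let $T_m=\{g\in H\del g_1=\dots=g_m=1\}$ denote the tail subgroups.

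For part (1), the reverse implication is a routine packaging argument. Given a basic open $V=V_1\times\dots\times V_m\times G_{m+1}\times\dots\ni 1$ and sets $B_i$ with property (OB) relative to $G_i$, I would apply Lemma \ref{char of rel OB} inside each of $G_1,\dots,G_m$ to get $B_i\subseteq (F_iV_i)^{k_i}$, absorb the differing exponents into a common $k$ (using that we may take $1\in F_iV_i$), and assemble $F=F_1\times\dots\times F_m\times\{1\}\times\dots$ so that $\prod_iB_i\subseteq (FV)^k$; Lemma \ref{char of rel OB} then yields property (OB) relative to $H$. For the forward implication I would set $B_i={\rm proj}_{G_i}(A)$, so that trivially $A\subseteq\prod_iB_i$, and show each $B_i$ inherits property (OB): given open $W\ni1$ in $G_i$, its preimage ${\rm proj}_{G_i}\inv(W)$ is open in $H$, Lemma \ref{char of rel OB} gives $A\subseteq (F\cdot {\rm proj}_{G_i}\inv(W))^k$, and applying the continuous surjective homomorphism ${\rm proj}_{G_i}$ to this inclusion produces $B_i\subseteq({\rm proj}_{G_i}(F)\cdot W)^k$, which is exactly condition (2) of Lemma \ref{char of rel OB} for $G_i$.

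For part (2), the easy direction is $(\Leftarrow)$. If ${\rm proj}_{P_k}(h_n)\to\infty$ is witnessed by a compatible left-invariant metric $d_k$ on $P_k$, then $D(g,h)=d_k({\rm proj}_{P_k}(g),{\rm proj}_{P_k}(h))$ is a continuous left-invariant pseudometric on $H$, and for any compatible left-invariant $\rho$ on $H$ the sum $D+\rho$ is a genuine metric, still compatible (it dominates $\rho$, so its topology is at least the product topology, while it is continuous for the product topology, so its topology is at most the product topology), and it satisfies $(D+\rho)(h_n,1)\geq d_k({\rm proj}_{P_k}(h_n),1)\to\infty$; hence $h_n\to\infty$.

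The substance lies in $(\Rightarrow)$, which I expect to be the main obstacle, as this is precisely where the product structure is used. Suppose $d$ is a compatible left-invariant metric on $H$ with $d(h_n,1)\to\infty$. The key structural observation is that the open ball $\{g\del d(g,1)<1\}$ contains a basic neighbourhood of $1$, and therefore contains some entire tail $T_m$; thus every $g\in H$ whose first $m$ coordinates are trivial lies within distance $1$ of the identity. Writing $h_n=s_nt_n$ with head $s_n=(h_n^{(1)},\dots,h_n^{(m)},1,1,\dots)$ and tail $t_n\in T_m$, left-invariance and the triangle inequality give $|d(h_n,1)-d(s_n,1)|\leq d(t_n,1)<1$, so $d(s_n,1)\to\infty$ for this fixed $m$. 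Finally, the embedding $\iota\colon P_m\to H$ sending $(g_1,\dots,g_m)$ to $(g_1,\dots,g_m,1,1,\dots)$ is a topological isomorphism onto a closed subgroup, so $d_m(x,y)=d(\iota x,\iota y)$ is a compatible left-invariant metric on $P_m$ with $d_m({\rm proj}_{P_m}(h_n),1)=d(s_n,1)\to\infty$, witnessing ${\rm proj}_{P_m}(h_n)\to\infty$ with $k=m$. The only delicate point is verifying that truncation interacts correctly with $d$, namely that tails are uniformly $d$-small, which is exactly the neighbourhood-basis observation above; everything else is bookkeeping.
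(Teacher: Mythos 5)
Your part (2) is correct and is essentially the paper's own argument: both directions proceed by splitting $H$ into the head $G_1\times\dots\times G_k$ and the tail subgroup, observing that the tail lies inside a $d$-ball of radius $1$ (hence has $d$-diameter at most $1$ by left-invariance), and transferring the metric along the inclusion of the head; the converse direction likewise builds a compatible metric on $H$ dominating a pullback of a metric on the head.

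Part (1) is where you diverge, and where there is a genuine gap relative to the statement as given. You route everything through Lemma \ref{char of rel OB}, but the direction of that lemma you invoke --- property (OB) relative to $G$ implies $A\subseteq (FV)^k$ --- requires $G$ to be \emph{separable}: it rests on Lemma \ref{main OB}, whose construction needs a countable dense set, and the paper explicitly exhibits ${\rm Sym}(\N)$ with the discrete topology as a group where this implication fails. Proposition \ref{products} imposes no separability hypothesis on the $G_n$, yet your forward implication applies Lemma \ref{char of rel OB} to $H$ and your reverse implication applies it to each $G_i$, so your proof only establishes the result for separable $G_n$ (equivalently, separable $H$). The paper avoids this entirely by deducing (1) from (2): for the reverse implication, a sequence $h_n\til\infty$ in $B_1\times B_2\times\dots$ has ${\rm proj}_{G_1\times\dots\times G_k}(h_n)\til\infty$ for some $k$, and the subadditivity $d(h_n,1)\leqslant\sum_{i\leqslant k}d(b_{n,i},1)$ forces some coordinate sequence to go to infinity, contradicting property (OB) of $B_m$; for the forward implication, one takes $B_i={\rm proj}_{G_i}(A)$ as you do, but certifies property (OB) of $B_i$ by noting that a sequence $f_n\til\infty$ in ${\rm proj}_{G_i}(A)$ would lift to a sequence in $A$ tending to infinity. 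Your argument is repairable either by adding the separability hypothesis or by replacing the appeals to Lemma \ref{char of rel OB} with direct metric arguments (e.g., pulling a metric witnessing unboundedness of $B_i$ back along ${\rm proj}_{G_i}$ and adding a compatible metric on $H$); as written, it proves a weaker statement than the one claimed.
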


\begin{proof}
(2) Let $(h_n)$ be a sequence in $H$. Assume first that $h_n\til \infty$, as witnessed by some compatible left-invariant metric $d$. Since $d$ is compatible with the topology of $H$, we can find a basic neighbourhood 
$$
U=V_1\times \ldots\times V_k\times G_{k+1}\times G_{k+2}\times \ldots
$$  
of the identity in $H$ so that $U\subseteq \{h\in H\del d(h,1)\leqslant 1\}$. In particular, the subgroup  $G_{k+1}\times G_{k+2}\times \ldots$ has finite $d$-diameter. 
Write now $h_n=a_nb_n$, where $a_n\in G_1\times \ldots\times G_k$ and $b_n\in G_{k+1}\times G_{k+2}\times \ldots$. Then 
$$
d(h_n,1)=d(a_nb_n,1)\leqslant d(a_n,1)+d(b_n,1)\leqslant d(a_n,1)+ {\rm diam}_d(G_{k+1}\times G_{k+2}\times \ldots),
$$ 
whereby, as $d(h_n,1)\til \infty$,  we have $d(a_n,1)\til \infty$ and thus also ${\rm proj}_{G_1\times \ldots\times G_k}(h_n)=a_n\til\infty$. 

Conversely, suppose that ${\rm proj}_{G_1\times \ldots\times G_k}(h_n)\Lim{n\til \infty}\infty$ for some $k$, as witnessed by some compatible left-invariant metric $\partial_1$ on  $G_1\times \ldots\times G_k$. Fix also a compatible left-invariant metric $\partial_2$ on $G_{k+1}\times G_{k+2}\times \ldots$ and let $d$ be defined on $H$ by $d(g_1g_2,f_1f_2)=\partial_1(g_1,f_1)+\partial_2(g_2,f_2)$ for $g_1,f_1\in G_1\times \ldots \times G_k$ and $g_2,f_2\in G_{k+1}\times G_{k+2}\times \ldots$. Then $d(h_n,1)\til \infty$, showing that $h_n\til \infty$ in $H$.

(1) Suppose first that $B_i\subseteq G_i$ for all $i$ and assume that $B_1\times B_2\times \ldots$ fails property (OB) relative to $H$. Then there is a sequence $(h_n)$ in $B_1\times B_2\times \ldots$ so that $h_n\til \infty$ and thus also
$$
{\rm proj}_{G_1\times \ldots\times G_k}(h_n)\Lim{n\til \infty}\infty
$$
for some specific $k\geqslant 1$. Fix a compatible left-invariant metric $d$ on $G_1\times \ldots\times G_k$ witnessing this. Writing $h_n=b_{n,1}b_{n,2}\cdots b_{n,k}$ with $b_{n,i}\in B_i$, we have that 
$$
d(h_n,1)\leqslant d(b_{n,1},1)+d(b_{n,2},1)+\ldots+d(b_{n,k},1),
$$
whereby there must be some $m\leqslant k$ and a subsequence $(b_{n_i,m})_i$ satisfying that  $d(b_{n_i,m},1)\Lim{i\til \infty} \infty$. Since $d$ restricts to a compatible left-invariant metric on $G_m$, this shows that $B_m$ fails property (OB) relative to $G_m$.

Conversely, suppose  $A\subseteq H$ has property (OB) relative to $H$. Then there is no sequence $(h_n)$ in $A$ with $h_n\til \infty$, which by (2) implies that, for every $k$,  there can be no sequence $(f_n)$ in ${\rm proj}_{G_k}(A)$ so that $f_n\til \infty$. In other words, $B_k={\rm proj}_{G_k}(A)$ has property (OB) relative to $G_k$ for every $k$ and $A\subseteq B_1\times B_2\times \ldots$. 
\end{proof}     

\begin{prop}
Let $G_1,G_2,\ldots$ be an infinite sequence of metrisable groups without property (OB) and set $H=G_1\times G_2\times \ldots$.   Then there is a sequence $(h_m)$ of elements in $H$ that eventually leaves every set with property (OB) relative to $H$, but so that $h_m\not \til \infty$.
\end{prop}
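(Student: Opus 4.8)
The plan is to construct the sequence from a two-parameter family, indexed by pairs $(k,t)$ of positive integers, in which the first $k$ coordinates are ``frozen'' at level $k$ while all later coordinates run off to infinity with $t$. Since each $G_i$ fails property (OB) relative to itself, the characterisation of the relative property (OB) in terms of sequences tending to infinity supplies, for every $i$, a sequence $(g_i^{(n)})_{n\ge1}$ in $G_i$ with $g_i^{(n)}\til\infty$. I would then define, for $k,t\ge1$,
\[
\big(h^{(k,t)}\big)_i=
\begin{cases}
g_i^{(k)} & \text{if } i\le k,\\
g_i^{(t)} & \text{if } i> k,
\end{cases}
\]
and let $(h_m)$ be any enumeration of the countable family $\{h^{(k,t)}\del k,t\ge1\}$.

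To see that $(h_m)$ leaves every set with property (OB) relative to $H$, I would invoke part (1) of Proposition \ref{products}: such a set is contained in a product $B_1\times B_2\times\cdots$ with each $B_i$ having property (OB) relative to $G_i$. If $h^{(k,t)}$ lies in this product, then its first coordinate $g_1^{(k)}$ lies in $B_1$; as $g_1^{(n)}\til\infty$ while $B_1$ has finite diameter in the metric witnessing this divergence, only finitely many $k$ qualify. Fixing such a $k$, the $(k+1)$-st coordinate $g_{k+1}^{(t)}$ must lie in $B_{k+1}$, which by the same argument permits only finitely many $t$. Hence only finitely many pairs $(k,t)$, equivalently only finitely many indices $m$, can land in the given (OB) set, so the sequence eventually leaves it.

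To check $h_m\not\til\infty$ I would appeal to part (2) of Proposition \ref{products}: it suffices that for every $L$ the projection of $(h_m)$ to $G_1\times\cdots\times G_L$ does not tend to infinity. But along the subsequence $\{h^{(L,t)}\del t\ge1\}$, i.e.\ with $k=L$ held fixed, the first $L$ coordinates equal the constant vector $(g_1^{(L)},\dots,g_L^{(L)})$, independent of $t$; so this projection has a constant, hence bounded, subsequence and cannot tend to infinity in $G_1\times\cdots\times G_L$.

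The conceptual obstacle, and the step that takes real effort to get right, is the design of the family itself. Any attempt to make the sequence escape the (OB) ideal forces some low coordinate to become unbounded, which by Proposition \ref{products}(2) threatens to drive the sequence to infinity; conversely, controlling the low coordinates tends to produce a coordinatewise bounded, hence (OB), subsequence. The resolution is exactly the asymmetry above: freezing coordinates $1,\dots,k$ at the single level $k$ keeps every fixed finite projection bounded along the fibre $\{h^{(k,t)}\del t\}$ and so defeats convergence to infinity, while the tail coordinates, governed by the independent parameter $t\til\infty$, force escape from every product of (OB) sets. Once the family is chosen this way, the verification rests only on the two bookkeeping facts that the first coordinate controls $k$ and the $(k+1)$-st controls $t$.
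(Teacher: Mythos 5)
Your proposal is correct and is essentially the paper's own proof: your family $h^{(k,t)}$ is exactly the paper's $f_{t,k}$ (first $k$ coordinates frozen at level $k$, tail coordinates running with the second parameter), and both verifications rest on parts (1) and (2) of Proposition \ref{products} in the same way. The only cosmetic difference is that the paper bounds the tail parameter uniformly by a single coordinate $k_0$, whereas you bound $t$ separately for each of the finitely many admissible $k$ using coordinate $k+1$; both are valid.
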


\begin{proof}
Since each $G_n$ fails property (OB), it contains a sequence $(a_{n,m})_{m\in \N}$ tending to $\infty$. We define
$$
f_{n,k}=(a_{1,k}, a_{2,k},\ldots, a_{k-1,k},a_{k,k}, a_{k+1,n}, a_{k+2,n},\ldots)\in H
$$
and well-order the double sequence $(f_{n,k})_{(n,k)\in \N^2}$ in ordertype $\N$. Now, for every $k$, we have 
$$
{\rm proj}_{G_1\times \ldots\times G_k}(f_{n,m})=(a_{1,k}, a_{2,k},\ldots, a_{k-1,k},a_{k,k})
$$
for an infinite number of $(n,m)\in \N^2$, which, by Proposition \ref{products} (2), shows  that $(f_{n,m})_{(n,m)\in \N^2}$ does not tend to infinity in $H$. 

On the other hand, to see that $(f_{n,m})_{(n,m)\in \N^2}$ eventually leaves every set with property (OB) relative to $H$, we have to see that, whenever $B_i\subseteq G_i$ have property (OB) relative to $G_i$, then only finitely many $f_{n,m}$ belong to $B_1\times B_2\times\ldots$. So fix $k_0$ large enough so that $a_{1,k}\notin B_1$ for $k\geqslant k_0$ and $n_0$ so that $a_{k_0,n}\notin B_{k_0}$ for all $n\geqslant n_0$. 
Suppose that $f_{n,k}\in B_1\times B_2\times \ldots$. As the first coordinate of $f_{n,k}$ is $a_{1,k}$, 
this implies that $k<k_0$ and hence also that the $k_0$th coordinate of $f_{n,k}$ is $a_{k_0,n}$. As we must have $a_{k_0,n}\in B_{k_0}$, we furthermore see that  $n<n_0$. So only finitely many $f_{n,m}$ belong to $B_1\times B_2\times \ldots$. 
\end{proof}

The following definition will be important in the next section.
\begin{defi}
A metrisable group $G$ is said to have the {\em local property (OB)} if there is a neighbourhood $U\ni 1$ with property (OB) relative to $G$.
\end{defi}

By Lemma \ref{products} (1), we immediately see that an infinite product $H=G_1\times G_2\times \ldots$ of metrisable groups without property (OB), e.g., $H=\Z^\N$, cannot have the local property (OB).

Though the relative property (OB) to a large extent is analogous to relative compactness in locally compact groups, there is one major difference which should be kept in mind. Namely, if $G$ is locally compact second countable and $H\leqslant G$ is a locally compact subgroup, then $H$ is closed in $G$ and thus a subset $A\subseteq H$ is relatively compact in $H$ if and only if it is relatively compact in $G$. On the other hand, as shown by V. V. Uspenski\u\i{} \cite{uspenski}, every Polish group is $H$ isomorphic to a closed subgroup  of ${\rm Homeo}([0,1]^\N)$ and by \cite{OB} this latter group has property (OB). Thus, every Polish group $H$ has property (OB) with respect to some ambient Polish group.


\subsection{Metrically proper metrics}Our aim is now to define and investigate the metric equivalent of proper metrics on locally compact groups. However, in contradistinction to locally compact metrisable groups, that by Struble's theorem always admit proper metrics, these may not exist on separable metrisable groups.

\begin{defi}
Let $F\colon (X,d_X) \til (Y,d_Y)$ be a (possibly discontinuous) map between metric spaces $(X,d_X)$ and $(Y,d_Y)$. We define the {\em compression modulus} $\kappa_1\colon [0,\infty[\til [0,\infty]$ of $F$ by
$$
\kappa_1(t)=\inf\big(d_Y(Fx_1, Fx_2)\del d_X(x_1,x_2)\geqslant t\big)
$$
and the {\em expansion modulus} $\kappa_2\colon  [0,\infty[\til [0,\infty]$  by
$$
\kappa_2(t)=\sup\big(d_Y(Fx_1, Fx_2)\del d_X(x_1,x_2)\leqslant t\big).
$$
\end{defi}
Remark that $\kappa_1$ and $\kappa_2$ are non-decreasing functions and that $F$ is uniformly continuous exactly when $\lim_{t\til 0_+}\kappa_2(t)=0$. Also, $F$ is a {\em uniform embedding}, i.e., a uniform homeomorphism with its image, if and only if $F$ is uniformly continuous and $\kappa_1(t)>0$ for all $t>0$.

We can now use our compression and expansion moduli to identify several classes of maps between metric spaces by their behaviour on large distances. Unfortunately, there is not general agreement on the terminology in this area, but only on which classes of maps to be studied.

\begin{defi}
A map $F\colon (X,d_X) \til (Y,d_Y)$ between metric spaces is said to be 
\begin{enumerate}
\item {\em metrically proper} if $F[A]$ has infinite $d_Y$-diameter for every set $A\subseteq X$ of infinite $d_X$-diameter,
\item{\em  expanding} if $\lim_{t\til \infty}\kappa_1(t)=\infty$, 
\item {\em bornologous} if $\kappa_2(t)<\infty$ for all $t\in [0,\infty[$,
\item {\em Lipschitz for large distances} if there is a constant $K\geqslant 1$ so that 
$$
 d_Y(Fx_1, Fx_2)\leqslant K\cdot d_X(x_1,x_2),
$$
whenever $d_X(x_1,x_2)\geqslant K$,
\item {\em Lipschitz for short distances} if there are constants $K\geqslant 1$ and $\eps>0$ so that 
$$
 d_Y(Fx_1, Fx_2)\leqslant K\cdot d_X(x_1,x_2),
$$
whenever $d_X(x_1,x_2)\leqslant \eps$,
\item{\em cobounded} if $\sup_{y\in Y}d_Y(y,F[X])<\infty$,
\item a {\em quasi-isometric embedding} if there is a constant $K\geqslant 1$ so that 
$$
\frac 1K d_X(x_1,x_2)\leqslant d_Y(Fx_1, Fx_2)\leqslant K\cdot d_X(x_1,x_2),
$$
whenever $d_X(x_1,x_2)\geqslant K$,
\item a {\em coarse embedding} if $F$ is both expanding and bornologous,
\item a {\em quasi-isometry between $(X,d_X)$ and $(Y,d_Y)$} if $F$ is a cobounded quasi-isometric embedding,
\item a {\em coarse equivalence between $(X,d_X)$ and $(Y,d_Y)$} if $F$ is a cobounded coarse embedding.
\end{enumerate}
\end{defi}
A few words on equivalent formulations of these concepts are in order. 

(i) Note that $F$ is metrically proper if and only if $F\inv(B)$ has finite diameter for every set $B\subseteq Y$ of finite diameter, which happens if and only if $d_Y(Fx_0, Fx_n)\til \infty$ whenever $(x_n)_{n=0}^\infty$ is a sequence in $X$ satisfying $d_X(x_0,x_n)\til \infty$. 

(ii) Expanding is simply a uniform version of being metrically proper. Equivalently, $F$ is expanding if and only if, for all $R>0$, there is $S>0$ so that
$$
d_X(x_1,x_2)>S\saa d_Y(Fx_1,Fx_2)>R.
$$

(iii) Similarly, $F$ is bornologous if and only if, for all $R>0$, there is $S>0$ so that
$$
d_X(x_1,x_2)<R\saa d_Y(Fx_1,Fx_2)<S.
$$
Thus, a bijection $F$ is bornologous if and only if $F\inv$ is expanding. 

(iv) $F$ is Lipschitz for large distances if and only if $\kappa_2$ is bounded by an affine function, i.e., if there are constants $K,C$ so that
$$
 d_Y(Fx_1, Fx_2)\leqslant K\cdot d_X(x_1,x_2)+C.
 $$

(v) $F$ is Lipschitz for short distances if and only if it is uniformly continuous and its modulus of uniform continuity is bounded below in a neighbourhood of $0$ by a positive linear function.

(vi) $F$ is a quasi-isometry between  $(X,d_X)$ and  $(Y,d_Y)$ if and only if $F$ is a quasi-isometric embedding and there is a quasi-isometric embedding $H\colon (Y,d_Y)\til (X,d_X)$ so that $H\circ F$ is close to ${\rm id}_X$ and $F\circ H$ is close to ${\rm id}_Y$. Here two maps $\phi, \psi\colon Z\til (V,d)$ are {\em close} if $\sup_{z\in Z} d(\phi(z),\psi(z))<\infty$. A similar reformulation is valid for coarse equivalence. 

(vii) Defining  metric spaces $(X,d_X)$ and $(Y,d_Y)$ to be {\em quasi-isometric} or {\em coarsely equivalent} if there is a quasi-isometry, respectively a coarse equivalence, from  $(X,d_X)$ to $(Y,d_Y)$, we see using (vi) that these notions are indeed equivalence relations on the class of metric spaces.

\begin{defi}
A compatible left-invariant metric $d$ on a topological group $G$ is said to be {\em metrically proper} if $d(g_n,1)\til \infty$ whenever $g_n\til \infty$.
\end{defi}

We now have the following characterisation of metrically proper left-invariant metrics.

\begin{lemme}\label{metrically proper}
The following are equivalent for a compatible left-invariant metric $d$ on  a  metrisable group $G$,
\begin{enumerate}
\item $d$ is metrically proper, 
\item for all $A\subseteq G$, we have ${\rm diam}_d(A)<\infty$ if and only if $A$ has property (OB) relative to $G$, 
\item for every compatible left-invariant metric $\partial$ on $G$, the mapping 
$$
{\rm id}\colon (G,\partial)\til (G,d)
$$
is {\em metrically proper},
\item for every compatible left-invariant metric $\partial$ on $G$, the mapping 
$$
{\rm id}\colon (G,d)\til (G,\partial)
$$
is {\em bornologous}.
\end{enumerate}
\end{lemme}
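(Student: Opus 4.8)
The plan is to treat condition (2) as the hub to which the other three reduce, the common content in all four being that a subset of $G$ is $d$-bounded if and only if it has property (OB) relative to $G$. First note that one half of (2) is automatic: since $d$ is itself a compatible left-invariant metric, any set with property (OB) relative to $G$ has finite $d$-diameter. So throughout, the only substantive assertion is the converse, that every $d$-bounded set has property (OB) relative to $G$.

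I would prove (1)$\,\Leftrightarrow\,$(2) using the sequential characterisation of the relative property (OB) recorded above, namely that $A$ has property (OB) relative to $G$ precisely when no sequence in $A$ tends to infinity. For (1)$\,\Rightarrow\,$(2), suppose $A$ is $d$-bounded but fails property (OB); then some $(g_n)\subseteq A$ has $g_n\to\infty$, while left-invariance together with finiteness of ${\rm diam}_d(A)$ forces $\sup_n d(g_n,1)<\infty$, contradicting the conclusion $d(g_n,1)\to\infty$ supplied by (1). For (2)$\,\Rightarrow\,$(1), given $g_n\to\infty$ with $d(g_n,1)\not\to\infty$, I would extract a $d$-bounded subsequence $(g_{n_k})$; then $\{g_{n_k}\mid k\in\N\}$ is $d$-bounded, hence has property (OB) by (2), yet still contains a sequence tending to infinity, a contradiction.

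The equivalences (2)$\,\Leftrightarrow\,$(3) and (2)$\,\Leftrightarrow\,$(4) I expect to be a direct unwinding of the definitions through left-invariance. By reformulation (i), ${\rm id}\colon(G,\partial)\to(G,d)$ is metrically proper exactly when every $d$-bounded set is $\partial$-bounded; quantifying over all compatible left-invariant $\partial$, condition (3) says that every $d$-bounded set is bounded in every compatible left-invariant metric, i.e. has property (OB), which is (2). For (4), by reformulation (iii) the map ${\rm id}\colon(G,d)\to(G,\partial)$ is bornologous exactly when its expansion modulus $\kappa_2(t)$ is finite for every $t$; by left-invariance $\kappa_2(t)$ is finite precisely when the $d$-ball $\{g\mid d(g,1)\leqslant t\}$ has finite $\partial$-diameter. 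Since every $d$-bounded set lies in a left-translate of such a ball and left-translations are isometries for both metrics, condition (4) again amounts to every $d$-bounded set having property (OB), i.e. (2).

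The only genuinely non-formal step is (1)$\,\Leftrightarrow\,$(2); the rest is translation of definitions, with left-invariance reducing every boundedness question to distances from the identity. The main, and rather modest, obstacle is the subsequence passage in (2)$\,\Rightarrow\,$(1), where I rely on the fact that a subsequence of a sequence tending to infinity still tends to infinity — immediate, since $g_n\to\infty$ is witnessed by a single metric $d'$ with $d'(g_n,1)\to\infty$, and this persists along subsequences.
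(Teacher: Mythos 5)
Your proposal is correct and uses essentially the same ingredients as the paper's proof: left-invariance to reduce all boundedness questions to distances from the identity, and the sequential characterisation of the relative property (OB). The only difference is organisational — you route everything through (2) as a hub, while the paper proves the cycle (1)$\Rightarrow$(2)$\Rightarrow$(3)$\Rightarrow$(4)$\Rightarrow$(1) — but the individual arguments (e.g.\ your subsequence extraction for (2)$\Rightarrow$(1) versus the paper's (4)$\Rightarrow$(1), and your translation of bornologousness into the $\partial$-boundedness of $d$-balls versus the paper's passage from pairs $(g_n,f_n)$ to $f_n\inv g_n$) are the same computation in different clothing.
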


\begin{proof}
(1)$\saa$(2): Suppose $d$ is metrically proper and $A\subseteq G$ is a subset without property (OB) relative to $G$. Then there is some sequence $g_n\in A$ so that $g_n\til \infty$. It follows that also $d(g_n,1)\til \infty$, whereby ${\rm diam}_d(A)=\infty$. 

(2)$\saa$(3): Suppose that (2) holds and $\partial$ is a compatible left-invariant metric on $G$. Then any set $A$ of infinite $\partial$-diameter must fail to have property (OB) relative to $G$ and thus, by (2), must have infinite $d$-diameter.

(3)$\saa$(4):  Assume that (4) fails, i.e.,  that there are a compatible left-invariant metric $\partial$ on $G$, a constant $K>0$ and $g_n, f_n\in G$ so that $d(g_n, f_n)<K$, but that $\partial(g_n,f_n)\til \infty$. 
Then, by left-invariance, we have $d(f_n\inv g_n,1)=d(g_n,f_n)<K$ and $\partial(f_n\inv g_n,1)=\partial(g_n,f_n)\til \infty$, which shows that $\{f_n\inv g_n\}_{n\in \N}$ has infinite $\partial$-diameter, but finite $d$-diameter, which implies the failure of  (3).

(4)$\saa$(1): If $d$ is not metrically proper, there is a sequence $g_n\til \infty$ so that $d(g_n,1)\not\til \infty$. Then, if $\partial$ is a compatible left-invariant metric so that $\partial(g_n,1)\til \infty$, we see that (4) fails for $\partial$. 
\end{proof}

We note that, by condition (4) of Lemma \ref{metrically proper}, if $d$ and $\partial$ are two metrically proper compatible left-invariant metrics on $G$, then the identity map is a coarse equivalence between $(G,d)$ and $(G,\partial)$. It follows that a map into a metric space, $f\colon G\til (X,d_X)$, is bornologous with respect to $d$ if and only if it is bornologous with respect to $\partial$. Similarly with ``metrically proper'' in place of ``bornologous''. Therefore, for groups $G$ admitting such metrics, we can talk of {\em bornologous} and {\em metrically proper} maps without referring to any specific choice of metric on $G$.  

\begin{defi} 
Two compatible left-invariant metrics $d$ and $\partial$ on a group $G$ are said to be {\em coarsely equivalent} or {\em quasi-isometric} if the identity map is a coarse equivalence, respectively a quasi-isometric equivalence, between $(G,d)$ and $(G,\partial)$. 
\end{defi}
Since metrically proper metrics are all coarsely equivalent, we define the {\em coarse equivalence class} of $G$ to be that determined by any metrically proper compatible left-invariant metric, provided such a metric exists.

Though far from being an equivalent reformulation, the following criterion is useful in identifying metrically proper metrics.  
\begin{exa}\label{geodesic proper}
Suppose that $d$ is a compatible  left-invariant {\em geodesic} metric on $G$, i.e., so that any two elements $g,f\in G$ are connected by a continuous path of length $d(g,f)$.  Then $d$ is metrically proper. 

To see this, suppose that $\partial$ is another compatible left-invariant metric on $G$. Since $d$ and $\partial$ are equivalent, there is some $\eps>0$ so that the $d$-ball $B_\eps=\{ g\in G\del d(g,1)\leqslant \eps\}$ has finite $\partial$-diameter. As $d$ is geodesic, we see that, for all $n\geqslant 1$ and $f\in B_{n\eps}$,  there are $g_0=1, g_1, g_2, \ldots, g_{n-1}, g_n=f$ so that $d(1, g_i\inv g_{i+1})=d(g_i,g_{i+1})=\eps$, whence $f= (g_0g_1\inv)\cdot (g_1\inv g_2)\cdots(g_{n-1}\inv g_n)\in \big(B_\eps)^n$.  In other words, $\big(B_\eps)^n=B_{n\eps}$. Since $\big(B_\eps)^n$ has finite $\partial$-diameter for all $n$, it follows that every $d$-bounded set is $\partial$-bounded, which verifies condition (3) of Lemma \ref{metrically proper}.

We mention that, of course, much weaker conditions on $d$ suffice for this argument. Indeed, we only need that, for all $\eps>0$ and $K>0$, there is some $m=m(\eps,K)$ so that every two elements $f,g\in G$ with $d(f,g)\leqslant K$ can be connected by a path $h_0=f, h_1, h_2, \ldots, h_m=g$ where $d(h_i,h_{i+1})\leqslant \eps$ for all $i$.
\end{exa}

\begin{exa}\label{banach1}
Consider the additive group $(X,+)$ of a Banach space $(X,\norm\cdot)$. Since $X$ is clearly geodesic with respect to the metric induced by the norm, by Example \ref{geodesic proper}, we see that this latter is metrically proper on $(X,+)$. 
\end{exa}

It is now time to characterise groups admitting a metrically proper compatible  metric.

\begin{thm}\label{existence of metrically proper}
The following are equivalent for a separable metrisable group $G$, 
\begin{enumerate}
\item $G$ admits a metrically proper compatible left-invariant metric,
\item $G$ has the local property (OB).
\end{enumerate}
\end{thm}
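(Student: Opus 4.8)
The plan is to prove the two implications separately, with essentially all the content concentrated in $(2)\saa(1)$, where Lemma \ref{main OB} does the heavy lifting.

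The direction $(1)\saa(2)$ is immediate. Suppose $d$ is a metrically proper compatible left-invariant metric and consider the open unit ball $U=\{g\in G\del d(g,1)<1\}$. This is a neighbourhood of $1$ of finite $d$-diameter, so by the equivalence $(1)\equi(2)$ of Lemma \ref{metrically proper} it has property (OB) relative to $G$; hence $G$ has the local property (OB).

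For $(2)\saa(1)$, fix a neighbourhood of $1$ witnessing the local property (OB). First I would shrink it to an \emph{open symmetric} neighbourhood $U\ni 1$ still having property (OB) relative to $G$, by intersecting its interior with the inverse of its interior and invoking that the class of (OB) sets is hereditary. I would then apply Lemma \ref{main OB} to this $U$, producing a compatible left-invariant metric $d$ for which a set has finite $d$-diameter exactly when it is contained in some $(FU)^k$ with $F\subseteq G$ finite and $k\geqslant 1$. The remaining task is to verify that $d$ is metrically proper, and for this I would check condition $(2)$ of Lemma \ref{metrically proper}, namely that finite $d$-diameter coincides with property (OB) relative to $G$. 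The forward direction combines Lemma \ref{main OB} with the ideal structure of the (OB) sets recorded after Lemma \ref{triangle ineq}: if $A$ has finite $d$-diameter then $A\subseteq (FU)^k$, and since both the finite set $F$ and $U$ have property (OB) relative to $G$, and this class is closed under products and hereditary, so does $A$. The reverse direction is automatic, since property (OB) relative to $G$ means finite diameter with respect to \emph{every} compatible left-invariant metric, in particular the one $d$ just produced.

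The argument is short precisely because the combinatorial core has already been isolated in Lemma \ref{main OB}; the one genuine point to watch—and what I would flag as the crux rather than a true obstacle—is that the metric supplied by that lemma is calibrated to a single neighbourhood $U$, so it is exactly the hypothesis that \emph{this particular} $U$ has property (OB) relative to $G$ that upgrades ``$d$-bounded'' to ``property (OB)''. I note finally that separability enters only through the appeal to Lemma \ref{main OB}, which needs a dense increasing sequence of finite sets.
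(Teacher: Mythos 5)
Your proposal is correct and follows essentially the same route as the paper: the implication $(2)\Rightarrow(1)$ applies Lemma \ref{main OB} to a symmetric open neighbourhood with the relative property (OB) and then verifies condition (2) of Lemma \ref{metrically proper} via the ideal structure of the relatively (OB) sets, while $(1)\Rightarrow(2)$ simply observes that a ball of finite $d$-diameter must have property (OB) relative to $G$. The only additions are cosmetic — explicitly shrinking the neighbourhood to a symmetric open one and spelling out the trivial reverse direction of condition (2) — both of which the paper leaves implicit.
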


\begin{proof}
(2)$\saa$(1): Suppose $U\ni 1$ is a symmetric open neighbourhood of $1$ with property (OB) relative to $G$ and let $d$ denote the metric on $G$ given by Lemma \ref{main OB}. We claim that $d$ is metrically proper. To see this, we verify condition (2) of Lemma \ref{metrically proper}. So suppose that $A\subseteq G$ has finite $d$-diameter. Then, by Lemma \ref{main OB}, there are a finite set $F\subseteq G$ and  $k\geqslant 1$ so that $A\subseteq (FU)^k$, which, since $U$ has finite diameter with respect to every compatible left-invariant metric, implies that also $A$ has finite diameter with respect to every compatible left-invariant metric. In other words, $A$ has property (OB) relative to $G$, thus verifying condition (2) of Lemma \ref{metrically proper}.

(1)$\saa$(2): Note  that if $d$ is metrically proper, then there is no sequence $g_n$ in $U=\{g\in G\del d(g,1)<1\}$ so that $g_n\til \infty$, i.e., $U$ has property (OB) relative to $G$.
\end{proof}

\begin{exa}
Again, Theorem \ref{existence of metrically proper} fails without the assumption of separability. To see this, consider the free non-abelian group $\F_{\aleph_1}$ on $\aleph_1$ generators $(a_\xi)_{\xi<\aleph_1}$ ($\aleph_1$ is the first uncountable cardinal number). When equipped with the discrete topology, $\F_{\aleph_1}$ is metrisable and has the local property (OB).
On the other hand, if $d$ is any left-invariant metric on $\F_{\aleph_1}$, then some finite-diameter ball $B_k=\{g\in \F_{\aleph_1}\del d(g,1)\leqslant k\}$ must contain a denumerable set of generators $\{a_\xi\}_{\xi \in A}$ ordered as $\N$. Noting that $\F_{\aleph_1}=\F(A)*\F(\aleph_1\setminus A)$, we can then choose some metric $d_1$ on $\F(A)$ for which 
$$
d_1(a_\xi,1)\Lim{\xi \in A}\infty
$$ 
and any other metric $d_2$ on $\F(\aleph_1\setminus A)$ and then define a metric $\partial$ on $\F(\aleph_1)$ by
$$
\partial(g_1f_1\cdots g_nf_n,1)=\sum_{i=1}^n\big(d_1(g_i,1)+d_2(f_i,1)\big)
$$
for $g_i\in \F(A)$ and $f_i\in \F(\aleph_1\setminus A)$. Then $\partial$ witnesses that $a_\xi\Lim{\xi \in A}\infty$ and hence  shows that $d$ is not metrically proper. 

Nonetheless, every  left-invariant metric on the uncountable discrete group ${\rm Sym}(\N)$ is metrically proper, since any such metric is bounded by the main result of \cite{bergman}.
\end{exa}

\begin{defi}
An isometric action $G\curvearrowright (X,d)$ of a metrisable group $G$ on a metric space $(X,d)$ is said to be {\em metrically proper} if, for all $x\in X$, we have
$$
d(g_nx,x)\til \infty 
$$
whenever $g_n\til \infty$ in $G$.
\end{defi}

Setting $x$ to be the identity element $1$ in $G$, one sees that a compatible left-invariant metric $d$ on $G$ is metrically proper if and only if the left-multiplication action $G\curvearrowright (G,d)$ is metrically proper.

\begin{prop}
The following are equivalent for a separable metrisable group $G$,
\begin{enumerate}
\item $G$ admits a metrically proper compatible left-invariant metric,
\item $G$ admits a metrically proper continuous isometric action on a metric space.
\end{enumerate}
\end{prop}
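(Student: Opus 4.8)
The plan is to prove both implications essentially by hand, the forward one being immediate from the remark preceding the proposition and the reverse one by pulling back the metric of $X$ along an orbit map.

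For $(1)\Rightarrow(2)$: if $d$ is a metrically proper compatible left-invariant metric on $G$, take the metric space $(X,d_X)=(G,d)$ and let $G$ act on it by left multiplication. This action is isometric precisely because $d$ is left-invariant, and it is continuous because multiplication in $G$ is continuous and $d$ is compatible. As noted immediately before the proposition, the left-multiplication action $G\curvearrowright(G,d)$ is metrically proper if and only if $d$ is, so $(2)$ holds with this action and no further work is needed.

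For $(2)\Rightarrow(1)$: suppose $G\curvearrowright(X,d_X)$ is a metrically proper continuous isometric action and fix any base point $x_0\in X$. First I would pull the metric back along the orbit map $g\mapsto gx_0$, setting $\rho(g,h)=d_X(gx_0,hx_0)$. Because each group element acts as an isometry, $\rho$ is left-invariant, since $\rho(fg,fh)=d_X(fgx_0,fhx_0)=d_X(gx_0,hx_0)$; and because the action is continuous, $g\mapsto gx_0$ is continuous, so $\rho$ is a continuous left-invariant pseudometric on $G$ (it may fail to separate points if the action is not free). To upgrade this to a genuine compatible metric, I would fix a compatible left-invariant metric $\partial$ on $G$, which exists by the Birkhoff--Kakutani theorem (Lemma \ref{birkhoff-kakutani}), and set $D=\rho+\partial$. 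This $D$ is left-invariant and separates points (because $\partial$ does), hence is a metric.

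The point requiring verification --- and the only place the argument is not completely formal --- is that $D$ induces the topology of $G$. Since $D\geqslant\partial$, the identity map $(G,D)\to(G,\partial)$ is $1$-Lipschitz, hence continuous; conversely $D$ is a sum of two pseudometrics each continuous for the group topology, so $D$ itself is continuous and every $D$-ball is open in $G$. Thus the $D$-topology coincides with the group topology, i.e.\ $D$ is compatible. Finally, $D$ is metrically proper: if $g_n\to\infty$ in $G$, then metric properness of the action applied at the single point $x_0$ gives $d_X(g_nx_0,x_0)\to\infty$, whence $D(g_n,1)\geqslant\rho(g_n,1)=d_X(g_nx_0,x_0)\to\infty$. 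This yields $(1)$. I expect the main (mild) obstacle to be exactly this compatibility check together with the observation that metric properness of the action at one base point is all that is needed; separability of $G$ plays no essential role in either direction, and one could alternatively route $(2)\Rightarrow(1)$ through Theorem \ref{existence of metrically proper} by checking that $\{g\del d_X(gx_0,x_0)<1\}$ has property (OB) relative to $G$.
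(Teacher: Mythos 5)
Your proposal is correct and follows essentially the same route as the paper: the forward direction is the left-multiplication action on $(G,d)$, and the reverse direction is exactly the paper's construction of $\partial(g,f)=d_X(gx_0,fx_0)+D(g,f)$ for an auxiliary compatible left-invariant metric $D$, with compatibility following from continuity plus the inequality against $D$. Your added detail on the compatibility check and on properness at a single base point just makes explicit what the paper leaves as a one-line remark.
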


\begin{proof}
(2)$\saa$(1): Supose $G\curvearrowright (X,d)$ is a metrically proper continuous isometric action  on a metric space and let $D$ be any compatible left-invariant metric on $G$. Fix any $x\in X$ and define $\partial(g,f)=d(gx,fx)+D(g,f)$ and note that $\partial$ is left-invariant, continuous and $\partial \geqslant D$, whereby it is compatible with the topology on $G$. Moreover, since the action is metrically proper, so is the metric $\partial$.
\end{proof}


\subsection{Left and right-invariant geometric structures}
As mentioned earlier, even if $G$ has a compatible two-sided invariant metric and  a compatible metrically proper left-invariant metric, it may not have a compatible metric that is simultaneously metrically proper and two-sided invariant. However, those groups that do are easily characterised by the following theorem.

\begin{thm}\label{SIN metrically proper}
The following are equivalent for a separable metrisable group $G$,
\begin{enumerate}
\item $G$ admits a metrically proper two-sided invariant compatible metric $d$,
\item $G$ is a SIN group, has the local property (OB) and, if a subset $A$ has property (OB) relative to $G$, then so does $A^G=\{gag\inv \del a\in A\;\&\; g\in G\}$.
\end{enumerate}
\end{thm}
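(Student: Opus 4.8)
The plan is to prove Theorem~\ref{SIN metrically proper} by establishing the two implications separately, with the forward direction (1)$\saa$(2) being essentially routine and the reverse direction (2)$\saa$(1) carrying the real content.

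\medskip

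\noindent\textbf{The direction (1)$\saa$(2).}
Assume $d$ is a metrically proper two-sided invariant compatible metric. Two-sided invariance immediately gives that $G$ is a SIN group: the open $d$-balls $B_\eps=\{g\del d(g,1)<\eps\}$ form a neighbourhood basis at $1$, and conjugation invariance of $d$ (i.e.\ $d(hgh\inv,1)=d(g,1)$ for all $h$) shows each $B_\eps$ is conjugacy invariant, so these balls form a basis of invariant neighbourhoods. That $G$ has the local property (OB) follows from Theorem~\ref{existence of metrically proper}, since a metrically proper metric exists; alternatively, the unit ball $B_1$ witnesses it directly via condition (2) of Lemma~\ref{metrically proper}. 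For the conjugacy-closure condition, suppose $A$ has property (OB) relative to $G$. By Lemma~\ref{metrically proper}(2), $A$ has finite $d$-diameter, say $d(a,1)\leqslant C$ for all $a\in A$. Then for any $g\in G$ and $a\in A$, conjugation invariance gives $d(gag\inv,1)=d(a,1)\leqslant C$, so $A^G$ also has finite $d$-diameter and hence, again by Lemma~\ref{metrically proper}(2), has property (OB) relative to $G$.

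\medskip

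\noindent\textbf{The direction (2)$\saa$(1).}
This is where the work lies. Since $G$ has the local property (OB), by Theorem~\ref{existence of metrically proper} there is a metrically proper compatible left-invariant metric, and since $G$ is SIN, by the theorem of Klee there is a compatible two-sided invariant metric. The problem is to produce a single metric that is simultaneously both, and the conjugacy-closure hypothesis is precisely the extra ingredient that obstructs the kind of counterexample exhibited for discrete groups with an infinite conjugacy class in the introduction. The natural strategy is to start from the Birkhoff--Kakutani machinery of Lemma~\ref{birkhoff-kakutani}, but to build the defining neighbourhood basis $(V_n)_{n\in\Z}$ out of \emph{conjugacy-invariant} sets that also have property (OB) relative to $G$. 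Concretely, I would fix a symmetric open $U\ni 1$ with property (OB) relative to $G$ (local property (OB)), and since $G$ is SIN I may arrange a neighbourhood basis of symmetric conjugacy-invariant open sets. The conjugacy-closure hypothesis guarantees that the invariant hull of a set with property (OB) again has property (OB), so one can inductively choose invariant symmetric open sets $V_n$ with $V_n^3\subseteq V_{n+1}$, each of which still has property (OB) relative to $G$, and with $\bigcup_n V_n=G$. Feeding these into Lemma~\ref{birkhoff-kakutani} produces a compatible left-invariant metric $d$. Because each $V_n$ is conjugacy invariant, the gauge $\delta$ and hence $d$ will be conjugation invariant, so $d$ is two-sided invariant. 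Because each $V_n$ has property (OB) relative to $G$, every $d$-bounded set lies in some $V_n$ and hence has property (OB), so by Lemma~\ref{metrically proper}(2) the metric $d$ is metrically proper.

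\medskip

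\noindent\textbf{The main obstacle.}
The delicate point is the simultaneous inductive construction of the sequence $(V_n)$: I need each $V_n$ to be open, symmetric, conjugacy invariant, to have property (OB) relative to $G$, to satisfy the cube condition $V_n^3\subseteq V_{n+1}$, and to exhaust $G$ in the positive direction while forming a neighbourhood basis at $1$ in the negative direction. Openness and conjugacy invariance are compatible because $G$ is SIN (invariant open neighbourhoods exist); the key leverage is that passing to an invariant hull and then to finite products $V^3$ keeps one inside the ideal of property (OB) sets, using both the conjugacy-closure hypothesis and the fact (from Lemma~\ref{triangle ineq} and the remarks following it) that this ideal is closed under products. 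The one subtlety to handle carefully is ensuring $\bigcup_n V_n=G$ while keeping each $V_n$ of property (OB): since a single property (OB) set need not generate $G$, I expect to interleave an increasing sequence of group elements (or a countable dense set, using separability) into the invariant hulls so that the union is all of $G$, while at each finite stage the set still has property (OB). Verifying that this interleaving can be done without destroying any of the other four requirements is the crux of the argument.
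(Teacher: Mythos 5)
Your proposal is correct and follows essentially the same route as the paper: both implications are handled exactly as you describe, with (2)$\saa$(1) proceeding by feeding a conjugacy-invariant, symmetric, open, property-(OB) basis $(V_n)_{n\in\Z}$ with $V_n^3\subseteq V_{n+1}$ into Lemma~\ref{birkhoff-kakutani}. The one step you flag as the crux --- exhausting $G$ while preserving openness, symmetry, conjugacy invariance and the relative property (OB) --- is resolved in the paper without interleaving a dense set: one first fixes a metrically proper compatible left-invariant metric $d$ (available by Theorem~\ref{existence of metrically proper} from the local property (OB)) and defines $V_{n+1}=V_n^3\cup\{g\in G\del d(g,1)<n\}^G$, so that exhaustion comes from the balls of $d$, openness and conjugacy invariance are automatic, and each $V_n$ retains property (OB) because the balls of $d$ have it (metric properness) and the hypothesis guarantees their conjugacy hulls do too.
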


\begin{proof}
The implication (1)$\saa$(2) is trivial since every $d$-ball is conjugacy invariant in $G$.

For the implication (2)$\saa$(1), note that, since $G$ is a SIN group, we can find a neighbourhood basis $V_0\subseteq V_{-1}\supseteq V_{-2}\supseteq \ldots\ni 1$ at the identity consisting of conjugacy invariant symmetric open sets. Moreover, since $G$ has the local property (OB), some $V_{n}$ has property (OB) relative to $G$. Without loss of generality, we can assume that this happens already for $V_0$. 
Now, by iteration, define 
$$
V_{n+1}=V_n^3\cup \{g\in G\del d(g,1)< n\}^G.
$$
Then each $V_n$ is symmetric open, conjugacy invariant, has property (OB) relative to $G$, $G=\bigcup_nV_n$ and $V_n^3\subseteq V_{n+1}$. It follows that the metric defined from $(V_n)_{n\in\Z}$ via Lemma \ref{birkhoff-kakutani} is two-sided invariant, metrically proper and compatible with the topology on $G$. 
\end{proof}

Even for a non-SIN group $G$, the large scale structure may be two-sided invariant, as, for example, is the case with $S_\infty\times \Z$, which is non-SIN, but, as $S_\infty$ has property (OB), is abelian at the large scale. Again, this admits a charaterisation as in Theorem 
\ref{SIN metrically proper}.

\begin{thm}\label{large scale invariant}
The following are equivalent for a metrisable group $G$ and compatible metrically proper left-invariant metric $d$ on $G$,
\begin{enumerate}
\item for every compatible  right-invariant metric $\partial$ on $G$, the mapping 
$$
{\rm id}\colon (G,d)\til (G,\partial)
$$
is bornologous,
\item  if a subset $A$ has property (OB) relative to $G$, then so does $A^G$.
\end{enumerate}
\end{thm}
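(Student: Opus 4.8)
The plan is to collapse both conditions to a single statement about conjugacy closures of $d$-balls. Throughout I would write $B_R=\{g\in G\del d(g,1)\leqslant R\}$ for the $d$-balls. Since $d$ is metrically proper, Lemma \ref{metrically proper} gives that a subset of $G$ has property (OB) relative to $G$ exactly when it has finite $d$-diameter, hence exactly when it is contained in some $B_R$; in particular each $B_R$ itself has property (OB) relative to $G$.

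The key step is to compute the expansion modulus $\kappa_2$ of the map $\text{id}\colon(G,d)\til(G,\partial)$ for a fixed compatible right-invariant metric $\partial$. Given $g,f\in G$ with $d(g,f)\leqslant R$, I would set $a=f\inv g$; by left-invariance of $d$ this is equivalent to $a\in B_R$, and $g=fa$. By right-invariance of $\partial$,
$$
\partial(g,f)=\partial(fa,f)=\partial(faf\inv,1).
$$
As $f$ ranges over $G$ and $a$ over $B_R$, the element $faf\inv$ ranges exactly over $(B_R)^G$, so
$$
\kappa_2(R)=\sup\big(\partial(b,1)\del b\in (B_R)^G\big).
$$
Thus $\text{id}\colon(G,d)\til(G,\partial)$ is bornologous (i.e. $\kappa_2(R)<\infty$ for all $R$) if and only if $(B_R)^G$ has finite $\partial$-diameter for every $R>0$.

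With this formula the two implications are short. For (2)$\saa$(1), I would fix any compatible right-invariant $\partial$ and any $R>0$; since $B_R$ has property (OB) relative to $G$, so does $(B_R)^G$ by (2), so $(B_R)^G$ has finite $\partial$-diameter, and the displayed identity gives $\kappa_2(R)<\infty$ for all $R$. For (1)$\saa$(2), I would take $A$ with property (OB) relative to $G$, so that $A\subseteq B_R$ for some $R$ and hence $A^G\subseteq (B_R)^G$. By (1) every compatible right-invariant $\partial$ yields $\kappa_2(R)<\infty$, that is, $(B_R)^G$ has finite $\partial$-diameter; as this holds for all right-invariant $\partial$, the set $(B_R)^G$ has property (OB) relative to $G$ by the right-invariant characterisation of the relative property (OB). Since property (OB) relative to $G$ is hereditary (Lemma \ref{triangle ineq}), $A^G$ inherits it.

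The only genuine content is the modulus computation, whose point is to mix the left-invariance of $d$ with the right-invariance of $\partial$ so that the conjugation $a\mapsto faf\inv$ appears and $\kappa_2(R)$ is identified with the $\partial$-radius of $(B_R)^G$. Once that identity is in place, the rest is bookkeeping using that the relative property (OB) may be tested against right-invariant metrics and forms an ideal. I expect no serious obstacle beyond keeping the two invariance sides straight.
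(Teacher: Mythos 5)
Your proof is correct and follows essentially the same route as the paper: the heart of both arguments is the identity $\partial(g,f)=\partial(faf^{-1},1)$ with $a=f^{-1}g$, which converts the expansion modulus of ${\rm id}\colon (G,d)\til (G,\partial)$ into the $\partial$-radius of the conjugacy closure of a $d$-ball. The only cosmetic difference is in (1)$\saa$(2), where the paper instantiates (1) at the single right-invariant metric $\partial(g,h)=d(g^{-1},h^{-1})$ to conclude directly that $A^G$ is $d$-bounded, while you quantify over all right-invariant metrics and invoke the right-invariant characterisation of the relative property (OB); both are valid.
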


\begin{proof}
(1)$\saa$(2): Suppose that (1) holds and that $A$ is a subset with property (OB) relative to $G$. Without loss of generality, we may assume that $1\in A$. Let $\partial(g,h)=d(g\inv, h\inv)$, which is a compatible right-invariant metric, whereby  ${\rm id}\colon (G,d)\til (G,\partial)$ is bornologous. There is therefore some $K>0$ so that 
$\partial(g,h)\leqslant K$, whenever $d(g,h)\leqslant {\rm diam}_d(A)$. It follows that, for all $a\in A$ and $g\in G$, we have 
$$
d(g,ga\inv)=d(1,a\inv)=d(a,1)\leqslant {\rm diam}_d(A),
$$
whereby
$$
d(1, gag\inv)=d(g\inv, ag\inv)=\partial(g, ga\inv)\leqslant K.
$$
It follows that ${\rm diam}_d(A^G)\leqslant 2K$, which, as $d$ is metrically proper, implies that $A^G$ has property (OB) relative to $G$. 

(2)$\saa$(1): Assume that (2) holds and that $\partial$ is a compatible right-invariant metric on $G$. Fix $C>0$, whereby $\{g\in G\del d(g,1)\leqslant C\}^G$ has property (OB) relative to $G$. Now, since the relative property (OB) is equivalently defined in terms of boundedness for right-invariant metrics, there is a $K$ such that 
$$
\{g\in G\del d(g,1)\leqslant C\}^G\subseteq \{h\in G\del \partial(h,1)\leqslant K\}.
$$
 Then, if $d(g,h)\leqslant C$, also $d(h\inv g,1)\leqslant C$ and thus $\partial(g,h)=\partial(gh\inv,1)=\partial\big(h(h\inv g)h\inv, 1\big)\leqslant K$, showing that ${\rm id}\colon (G,d)\til (G,\partial)$ is bornologous. 
\end{proof}

Before stating the next corollary, let us observe that if $d$ is a metrically proper left-invariant metric on $G$, then $\partial$ given by $\partial(g,h)=d(g\inv, h\inv)$ is metrically proper and right-invariant. This can be seen by noting that $d$-balls and $\partial$-balls coincide.
\begin{cor}
Suppose that $d_L$ and $d_R$ compatible metrically proper left-, respectively right-invariant, metrics on $G$. Then ${\rm id}\colon (G,d_L)\til (G,d_R)$ is a coarse equivalence if and only if, whenever a subset $A$ has property (OB) relative to $G$, then so does $A^G$.
\end{cor}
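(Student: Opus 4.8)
The plan is to reduce the statement to the distinguished right-invariant metric $\partial_0(g,h)=d_L(g\inv,h\inv)$ and then to exploit the inversion symmetry of $d_L$. First I would record that, since $\mathrm{id}\colon(G,d_L)\to(G,d_R)$ is a bijection, it is automatically cobounded, so that being a coarse equivalence is the same as being a coarse embedding, i.e.\ being simultaneously expanding and bornologous. Next, as noted in the observation preceding the corollary, $\partial_0$ is a metrically proper right-invariant metric. Transporting the remark following Lemma \ref{metrically proper} through the inversion isometry $g\mapsto g\inv$ yields its right-invariant analogue: any two metrically proper compatible right-invariant metrics are coarsely equivalent. Applying this to $d_R$ and $\partial_0$, the map $\mathrm{id}\colon(G,d_R)\to(G,\partial_0)$ is a coarse equivalence, and hence $\mathrm{id}\colon(G,d_L)\to(G,d_R)$ is a coarse equivalence if and only if $\mathrm{id}\colon(G,d_L)\to(G,\partial_0)$ is.

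Having replaced $d_R$ by $\partial_0$, I would observe that for this particular pair the expansion and the bornologousness conditions collapse into one. Indeed, bornologousness of $\mathrm{id}\colon(G,\partial_0)\to(G,d_L)$ reads ``$\partial_0(g,h)<R\saa d_L(g,h)<S$'', and since $\partial_0(g,h)=d_L(g\inv,h\inv)$, the bijective substitution $g\mapsto g\inv$, $h\mapsto h\inv$ turns this into ``$d_L(g,h)<R\saa\partial_0(g,h)<S$'', which is bornologousness of $\mathrm{id}\colon(G,d_L)\to(G,\partial_0)$. Because a bijection is bornologous exactly when its inverse is expanding, this equates the expansion of $\mathrm{id}\colon(G,d_L)\to(G,\partial_0)$ with its own bornologousness. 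Consequently $\mathrm{id}\colon(G,d_L)\to(G,\partial_0)$ is a coarse equivalence precisely when it is bornologous, and the whole corollary reduces to identifying bornologousness of this single map with the property-(OB) condition on $A^G$.

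For that identification I would invoke Theorem \ref{large scale invariant}, whose condition (2) is verbatim the condition appearing in the corollary. If the corollary's hypothesis holds, then condition (1) of that theorem holds, and specialising it to the right-invariant metric $\partial_0$ gives immediately that $\mathrm{id}\colon(G,d_L)\to(G,\partial_0)$ is bornologous. Conversely, assuming $\mathrm{id}\colon(G,d_L)\to(G,\partial_0)$ bornologous, I would rerun the computation from the proof of the implication (1)$\saa$(2) of Theorem \ref{large scale invariant}, which constructs and uses no right-invariant metric other than this very $\partial_0$, to conclude that $A^G$ has property (OB) relative to $G$ whenever $A$ does.

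The main obstacle is precisely this last step: condition (1) of Theorem \ref{large scale invariant} is quantified over \emph{all} compatible right-invariant metrics, whereas the coarse-equivalence hypothesis only delivers control for the single metric $\partial_0$ (equivalently $d_R$). The resolution is the remark that the direction (1)$\saa$(2) of that theorem tests only $\partial_0$, so the single-metric statement for $\partial_0$ is already equivalent to the full condition (1) and hence to condition (2). Once this is in place, the three reductions above assemble directly into the claimed equivalence.
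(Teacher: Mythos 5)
Your proof is correct and follows the route the paper intends: the corollary is stated without proof there precisely because it is meant to be assembled from Theorem \ref{large scale invariant}, the observation that $\partial_0(g,h)=d_L(g^{-1},h^{-1})$ is a metrically proper right-invariant metric, and the mutual coarse equivalence of metrically proper metrics of a given invariance type. You correctly identify and resolve the one genuine subtlety --- that condition (1) of the theorem quantifies over \emph{all} compatible right-invariant metrics while the coarse-equivalence hypothesis supplies control only for $d_R$ --- by noting that the proof of (1)$\Rightarrow$(2) only ever tests the single metric $\partial_0$; your inversion-symmetry observation, which collapses the expanding and bornologous conditions into one for the map ${\rm id}\colon (G,d_L)\to(G,\partial_0)$, is a clean way of handling both halves of the coarse-equivalence condition at once.
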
 
We designate the above situation by saying that the {\em left and right  geometric structures are coarsely equivalent}.

We should remark that, since the relative property (OB) is equivalently characterised by compatible right-invariant metrics, we have that, if $d$ is metrically proper and left-invariant, then ${\rm id}\colon  (G,\partial)\til (G,d)$ is metrically proper for every compatible right-invariant $\partial$. However, by the above, ${\rm id}$ may fail to be so uniformly, i.e., it may not be expanding.





\subsection{Maximal metrics}
Having studied the metric equivalent of proper metrics on locally compact groups, we now focus on those that are quasi-isometric to the word metrics induced by canonical generating sets.

\begin{defi}
A compatible left-invariant metric $d$ on $G$ is said to be {\em maximal} if, for every other compatible left-invariant metric $\partial$ on $G$, the map
$$
{\rm id}\colon (G,d)\til (G,\partial)
$$
is Lipschitz for large distances,
\end{defi}

We remark that, unless $G$ is discrete, this is really the strongest notion of maximality possible for $d$. Indeed, if $G$ is non-discrete and thus $d$ takes arbitrarily small values, then 
$$
{\rm id}\colon (G,d)\til (G, \sqrt d)
$$
fails to be Lipschitz for small distances, while $\sqrt d$ is a compatible left-invariant metric on $G$.

Also, since a map that is Lipschitz for large distances must be bornologous, we see, by Lemma \ref{metrically proper}, that a maximal metric is always metrically proper. Moreover, any two maximal metrics are clearly bi-Lipschitz equivalent for large distances, i.e., are quasi-isometric.

Recall that, if $\Sigma$ is a symmetric generating set for a group $G$, then we can define an associated {\em word metric} $\rho_\Sigma\colon G\til \N$ by
$$
\rho_\Sigma(g,h)=\min \big(k\geqslant 0\del \e s_1, \ldots, s_k\in \Sigma\; \; g=hs_1\cdots s_k\big).
$$
Thus, $\rho_\Sigma$ is a left-invariant metric on $G$, but, since it only takes values in $\N$, it will never be a compatible metric on $G$ unless of course $G$ is discrete. However, in certain cases, this may be remedied.

\begin{lemme}\label{construction maximal metrics}
Suppose $d$ is a compatible left-invariant metric on a topological group $G$ so that some ball $B_\eps=\{g\in G\del d(g,1)\leqslant \eps\}$ generates $G$. Define $\partial$ by
$$
\partial(f,h)=\inf\Big(\sum_{i=1}^n d(g_i,1)\Del g_i\in B_\eps\; \&\; f=hg_1\cdots g_n\Big).
$$
Then $\partial$ is a compatible left-invariant metric,  quasi-isometric to the word metric $\rho_{B_\eps}$. 
\end{lemme}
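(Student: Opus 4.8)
The plan is to establish, in order, that $\partial$ is a genuine metric, that it is left-invariant and compatible, and finally that it is quasi-isometric to $\rho_{B_\eps}$. Throughout I shall use the identity $d(g\inv,1)=d(g,1)$, valid for a left-invariant $d$ (apply left translation by $g$), which shows $B_\eps$ is \emph{symmetric}; together with the generation hypothesis this makes $\rho_{B_\eps}$ a well-defined left-invariant metric and guarantees that the infimum defining $\partial$ is over a nonempty set, so $\partial(f,h)<\infty$.

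First I would verify the metric axioms for $\partial$. Reflexivity is the empty product ($n=0$), and symmetry comes from inverting a representation: if $f=hg_1\cdots g_n$ then $h=fg_n\inv\cdots g_1\inv$ with $g_i\inv\in B_\eps$ and $d(g_i\inv,1)=d(g_i,1)$, so the two infima coincide. The triangle inequality follows by concatenating a representation of $f$ relative to $k$ with one of $k$ relative to $h$. Left-invariance is immediate, since $kf=khg_1\cdots g_n$ if and only if $f=hg_1\cdots g_n$. For non-degeneracy I would prove the fundamental comparison $d(f,h)\leqslant\partial(f,h)$: telescoping along partial products and using left-invariance gives $d(g_1\cdots g_n,1)\leqslant\sum_i d(g_i,1)$, and taking the infimum yields $d\leqslant\partial$; in particular $\partial(f,h)=0$ forces $d(f,h)=0$, i.e.\ $f=h$.

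Second, for compatibility I would observe the local coincidence $\partial(f,h)=d(f,h)$ whenever $d(f,h)\leqslant\eps$: one inequality is $d\leqslant\partial$, and the other follows from the single-factor representation $g_1=h\inv f\in B_\eps$. Hence the $\partial$-balls and $d$-balls of radius $\leqslant\eps$ about any point are equal, so $\partial$ and the compatible metric $d$ generate the same topology, and $\partial$ is a compatible left-invariant metric.

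Finally comes the quasi-isometry with $\rho_{B_\eps}$, which is the crux. The upper bound $\partial\leqslant\eps\cdot\rho_{B_\eps}$ is routine: a geodesic word $f=hs_1\cdots s_n$ with $n=\rho_{B_\eps}(f,h)$ gives $\partial(f,h)\leqslant\sum_i d(s_i,1)\leqslant n\eps$. The main obstacle is the reverse bound $\rho_{B_\eps}(f,h)\leqslant\tfrac2\eps\partial(f,h)+1$, since a representation nearly realizing $\partial$ may use arbitrarily many factors of tiny $d$-length, so one cannot merely count factors. The key is a greedy regrouping: given $f=hg_1\cdots g_n$ with $g_i\in B_\eps$ and total length $S=\sum_i d(g_i,1)$, scan left to right, absorbing consecutive factors into a running product $P_k$ while it remains in $B_\eps$, and close the block $P_k=g_{j_{k-1}+1}\cdots g_{j_k}$ when appending $g_{j_k+1}$ would leave $B_\eps$. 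Writing $\ell_i=d(g_i,1)$ and $S_k$ for the $d$-length of block $k$, the closing condition $d(P_kg_{j_k+1},1)>\eps$ with the triangle inequality (and $d(P_k,1)\leqslant S_k$) forces $S_k+\ell_{j_k+1}>\eps$; since $\ell_{j_k+1}\leqslant S_{k+1}$, this gives $S_k+S_{k+1}>\eps$ for every non-final block. Summing over the blocks, where each $S_k$ is counted at most twice, yields $(m-1)\eps<2S$ for the number $m$ of blocks. As $f=hP_1\cdots P_m$ with each $P_k\in B_\eps$, we conclude $\rho_{B_\eps}(f,h)\leqslant m<\tfrac2\eps S+1$, and letting $S\downarrow\partial(f,h)$ gives the bound. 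Combining the two estimates, the identity map $(G,\rho_{B_\eps})\til(G,\partial)$ is a quasi-isometric embedding, and being onto it is a quasi-isometry, as required.
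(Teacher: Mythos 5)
Your proof is correct, and for the crucial reverse inequality it takes a genuinely different route from the paper's. Both arguments open identically (the easy bound $\partial\leqslant\eps\cdot\rho_{B_\eps}$, then converting a near-optimal $\partial$-representation into a short word), but the mechanisms for the conversion differ. The paper picks, among all representations $f=hg_1\cdots g_n$ with $\sum_i d(g_i,1)\leqslant\partial(f,h)+1$, one of \emph{minimal length} $n$; minimality forces $g_ig_{i+1}\notin B_\eps$ for every consecutive pair (else coalesce), hence no two consecutive factors both lie in $B_{\eps/2}$, so at least $\tfrac{n-1}{2}$ factors have $d$-length exceeding $\tfrac\eps2$, which bounds $n$ by roughly $\tfrac4\eps\partial(f,h)$. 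You instead take an \emph{arbitrary} near-optimal representation and coalesce greedily into maximal blocks staying in $B_\eps$; the closing condition yields $S_k+S_{k+1}>\eps$ for consecutive block-lengths, and the double-counting gives $(m-1)\eps<2S$, i.e.\ roughly $\tfrac2\eps\partial(f,h)$. The two are dual ways of exploiting the same phenomenon (many tiny factors can be merged), but yours avoids the selection of a minimal-length representative and gives a marginally better constant, at the cost of a slightly more elaborate bookkeeping. You also spell out the metric axioms and compatibility (via the local coincidence $\partial=d$ on $d$-balls of radius $\eps$) more explicitly than the paper, which simply invokes continuity of the \'ecart and $\partial\geqslant d$; your version of that step is a clean and correct alternative.
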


\begin{proof}By the continuity of $d$, $\partial$ is a continuous left-invariant {\em \'ecart}, i.e., satisfies the triangle inequality. Also, since $\partial\geqslant d$, it is a metric generating the topology on $G$.

To see that $\partial$ is quasi-isometric to $\rho_{B_\eps}$, note first that 
$$
\partial(f,h)\leqslant \eps\cdot \rho_{B_\eps}(f,h).
$$
For the other direction, fix $f,h\in G$ and  find a shortest sequence $g_1, \ldots, g_n\in B_\eps$ so that $f=hg_1\cdots g_n$
and $\sum_{i=1}^n d(g_i,1)\leqslant \partial(f,h)+1$. Note that, for all $i$, we have  $g_ig_{i+1}\notin B_\eps$, since otherwise we could coalesce $g_i$ and $g_{i+1}$ into a single term $g_ig_{i+1}$ to get a shorter sequence where $d(g_ig_{i+1}, 1)\leqslant d(g_i,1)+d(g_{i+1},1)$. It thus follows that either $g_i\notin B_{\frac \eps2}$ or $g_{i+1}\notin B_\frac\eps2$, whereby there are at least $\frac{n-1}2$ terms $g_i$ so that $g_i\notin B_\frac\eps2$ and hence $d(g_i,1)>\frac\eps2$. In particular, 
$$
\frac{n-1}2\cdot \frac \eps 2< \sum_{i=1}^n d(g_i,1)\leqslant \partial(f,h)+1
$$
and so, as $\rho_{B_\eps} (f,h)\leqslant n$, we have
$$
  \frac \eps4 \cdot \rho_{B_\eps}(f,h)-   \big(1+\frac \eps4\big)\leqslant    \partial(f,h)\leqslant \eps\cdot \rho_{B_\eps}(f,h)
$$
showing that $\partial$ and $\rho_{B_\eps}$ are quasi-isometric.
\end{proof}

Let us also observe that, if $\Sigma$ and $\Delta$ are two symmetric generating sets for $G$ so that $\Sigma\subseteq (F\Delta)^n$ and $\Delta\subseteq (E\Sigma)^m$ for some finite sets $F,E\subseteq G$ and $n,m\geqslant 1$, then the two word metrics $\rho_\Sigma$ and $\rho_\Delta$ are quasi-isometric. Indeed, it suffices to notice that, in this case, $\Sigma$ is $\rho_\Delta$-bounded and vice versa.
This, in particular, applies when $\Sigma$ and $\Delta$ both have property (OB) relative to $G$. 

\begin{defi}
A metric space $(X,d)$ is said to be
 {\em large scale geodesic} if there is $K\geqslant 1$ so that, for all $x,y\in X$, there are $z_0=x,z_1,z_2,\ldots,z_n=y$ so that $d(z_i,z_{i+1})\leqslant K$ and 
$$
\sum_{i=0}^{n-1}d(z_i,z_{i+1})\leqslant K\cdot d(x,y).
$$
\end{defi}
For example, if $\mathbb X$ is a connected graph, then the shortest path metric $\rho$ on $\mathbb X$ makes $(\mathbb X, \rho)$ large scale geodesic with constant $K=1$.

We should note two well-known facts about large scale geodecity that can easily be checked by hand.
\begin{lemme}\label{facts geodecity}
\begin{enumerate}
\item Large scale geodecity is a quasi-isometric invariant of metric spaces.
\item If $F\colon (X,d_X)\til (Y,d_Y)$  is a bornologous map from a large scale geodesic space $(X,d_X)$ to a metric space $(Y,d_Y)$, then $F$ is Lipschitz for large distances.
\end{enumerate}
\end{lemme}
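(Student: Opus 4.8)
The plan is to handle the two assertions separately, noting that both rest on one device---controlling the \emph{number} of links in a large scale geodesic chain, which is exactly the trick already used in the proof of Lemma~\ref{construction maximal metrics}. For (2), suppose $(X,d_X)$ is large scale geodesic with constant $K$ and that $F$ is bornologous, so that $S:=\kappa_2(K)<\infty$ and every pair at $d_X$-distance $\le K$ maps to a pair at $d_Y$-distance $\le S$. Given $x,y\in X$, I would take a witnessing chain and then pass to one with $n$ \emph{minimal} among all chains $z_0=x,\dots,z_n=y$ satisfying $d_X(z_i,z_{i+1})\le K$ and $\sum_i d_X(z_i,z_{i+1})\le K\,d_X(x,y)$. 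Minimality forces $d_X(z_i,z_{i+1})+d_X(z_{i+1},z_{i+2})>K$ for every $i$, since otherwise deleting $z_{i+1}$ would give a shorter admissible chain (the merged step stays $\le K$ and the total only drops). Hence in each consecutive pair one step exceeds $K/2$, so at least $\tfrac{n-1}{2}$ steps have length $>K/2$, and comparing with $\sum_i d_X(z_i,z_{i+1})\le K\,d_X(x,y)$ yields $n\le 4\,d_X(x,y)+1$. Then
\[
d_Y(Fx,Fy)\le\sum_{i=0}^{n-1}d_Y(Fz_i,Fz_{i+1})\le nS\le 4S\,d_X(x,y)+S,
\]
so $\kappa_2$ is bounded by an affine function and $F$ is Lipschitz for large distances by reformulation~(iv).

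For (1), it suffices to prove that if $(X,d_X)$ is large scale geodesic and $F\colon(X,d_X)\til(Y,d_Y)$ is a quasi-isometry then $(Y,d_Y)$ is large scale geodesic, since being a quasi-isometry is a symmetric relation. Write $K$ for the quasi-isometry constant, $A=\sup_y d_Y(y,F[X])<\infty$ for coboundedness, and $\lambda$ for the large scale geodesic constant of $X$; as a quasi-isometric embedding $F$ is in particular Lipschitz for large distances, giving $d_Y(Fa,Fb)\le K'd_X(a,b)+C'$ for all $a,b$. Given $y,y'\in Y$, coboundedness supplies $x,x'$ with $Fx,Fx'$ within $A$ of $y,y'$, and the quasi-isometric lower bound together with the triangle inequality yields $d_X(x,x')\le K\,d_Y(y,y')+C_0$. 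Running the reduction of part (2) on a large scale geodesic chain $x=z_0,\dots,z_n=x'$ gives $n\le 4\,d_X(x,x')+1$, all steps $\le\lambda$, and $\sum_i d_X(z_i,z_{i+1})\le\lambda\,d_X(x,x')$. Pushing this chain forward and prepending $y$, appending $y'$, produces a chain $y,Fz_0,\dots,Fz_n,y'$ in $Y$ whose consecutive gaps are $\le\max(K'\lambda+C',A)$ and whose total length is at most
\[
2A+K'\lambda\,d_X(x,x')+nC'\le P\,d_Y(y,y')+Q
\]
for constants $P,Q$, using the bounds on $n$ and on $d_X(x,x')$. Converting this affine estimate into the multiplicative bound required by the definition is done by a threshold: for $d_Y(y,y')$ below a fixed constant use the one-step chain $y,y'$, and above it absorb $Q$ into $P\,d_Y(y,y')$; choosing the large scale geodesic constant of $Y$ to dominate the gap bound, $P+Q$, and $1$ finishes the argument.

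The main obstacle is the one flagged above: large scale geodesicity controls only the \emph{total length} of a chain, not the number of its links, so the naive inequality $d_Y(Fx,Fy)\le\sum_i d_Y(Fz_i,Fz_{i+1})$ is worthless until $n$ is bounded in terms of $d_X(x,y)$. Deleting any skippable $z_{i+1}$ to force at least half the steps to have length $>K/2$ is exactly what supplies $n\lesssim d_X(x,y)$, and this reduction is the crux of both parts. A secondary, purely bookkeeping subtlety peculiar to (1) is that the estimate naturally produces an affine bound $P\,d_Y+Q$ rather than the multiplicative bound in the definition; this is disposed of by treating small distances with the trivial one-step chain.
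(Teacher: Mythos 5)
Your proof is correct. The paper offers no argument for this lemma (it is dismissed as ``well-known facts\dots easily checked by hand''), so there is nothing to diverge from; your verification is sound, and the key device---passing to a chain with a minimal number of links so that consecutive steps cannot be merged, which bounds the number of links linearly in $d_X(x,y)$---is precisely the trick the paper itself uses in the proof of Lemma~\ref{construction maximal metrics}, as you observe. The handling of the affine-versus-multiplicative bound in part (1) via a threshold on $d_Y(y,y')$ is also the standard and correct way to close that gap.
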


In our current setup, we can now characterise the  maximal metrics among the metrically proper ones.

\begin{prop}\label{char maximal metric}
The following conditions are equivalent for  a metrically proper compatible left-invariant metric $d$ on a metrisable group $G$,
\begin{enumerate}
\item $d$ is maximal,
\item $(G,d)$ is large scale geodesic,
\item there is $\eps>0$ so that $B_\eps=\{g\in G\del d(g,1)\leqslant \eps\}$ generates $G$ and $d$ is quasi-isometric to the word metric $\rho_{B_\eps}$.
\end{enumerate}
\end{prop}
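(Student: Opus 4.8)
The plan is to prove the three-way equivalence by a cycle $(1)\Rightarrow(2)\Rightarrow(3)\Rightarrow(1)$, leaning heavily on the two facts in Lemma~\ref{facts geodecity} and on the construction in Lemma~\ref{construction maximal metrics}. Throughout, I will use freely that a maximal metric is metrically proper (noted in the text) and that, by Lemma~\ref{metrically proper}, $d$-bounded sets are precisely the sets with property (OB) relative to $G$.

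For $(2)\Rightarrow(3)$, assume $(G,d)$ is large scale geodesic with constant $K\geqslant 1$. Set $\eps=K$ and $B_\eps=\{g\del d(g,1)\leqslant \eps\}$. Large scale geodecity says any $g$ can be written as a product $g=g_1\cdots g_n$ of successive increments each lying in $B_\eps$ (after translating the points $z_i$ by left-invariance), so $B_\eps$ generates $G$; this also shows $\rho_{B_\eps}(g,1)\leqslant n$ while $\sum_i d(g_i,1)\leqslant K\cdot d(g,1)$, giving $\rho_{B_\eps}\lesssim d$. Conversely $d\leqslant \eps\cdot\rho_{B_\eps}$ since $B_\eps$ has $d$-diameter at most $2\eps$ and the triangle inequality applies along any word. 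These two inequalities are exactly quasi-isometry of $d$ and $\rho_{B_\eps}$, giving~(3).

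For $(3)\Rightarrow(1)$, suppose $B_\eps$ generates and $d$ is quasi-isometric to $\rho_{B_\eps}$. Since $d$ is metrically proper, $B_\eps$ has property (OB) relative to $G$. Let $\partial$ be any compatible left-invariant metric; I must show $\mathrm{id}\colon(G,d)\til(G,\partial)$ is Lipschitz for large distances. Because $B_\eps$ has property (OB), it is $\partial$-bounded, say of $\partial$-diameter $\leqslant C$; then along any $\rho_{B_\eps}$-geodesic word for $g$ the triangle inequality yields $\partial(g,1)\leqslant C\cdot\rho_{B_\eps}(g,1)$, i.e.\ $\partial\lesssim \rho_{B_\eps}$. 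Composing with $\rho_{B_\eps}\lesssim d$ from~(3) gives $\partial\lesssim d$, which is precisely Lipschitz for large distances; hence $d$ is maximal.

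For $(1)\Rightarrow(2)$, the natural route is to build from $d$ the metric $\partial$ of Lemma~\ref{construction maximal metrics}: since $d$ is metrically proper (hence some ball $B_\eps$ generates $G$, which I will need to justify—see below), $\partial$ is a compatible left-invariant metric quasi-isometric to the word metric $\rho_{B_\eps}$, and by construction $(G,\partial)$ is large scale geodesic. Maximality of $d$ gives $\partial\lesssim d$, and one always has $d\leqslant\partial$, so $d$ and $\partial$ are quasi-isometric; since large scale geodecity is a quasi-isometric invariant by Lemma~\ref{facts geodecity}(1), $(G,d)$ is large scale geodesic, giving~(2). I expect the main obstacle to sit here: I must first secure that some $d$-ball $B_\eps$ actually generates $G$, which is not automatic from metric properness alone. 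The clean way is to argue that if no $d$-ball generated $G$, the subgroups $\langle B_n\rangle$ would form a proper increasing chain, and one could manufacture a compatible left-invariant metric $\partial$ blowing up across this chain (assigning large $\partial$-length to elements requiring many ``jumps'' between successive subgroups), contradicting that $\mathrm{id}\colon(G,d)\til(G,\partial)$ is Lipschitz for large distances. Making this construction genuinely compatible with the topology, rather than merely left-invariant, is the delicate point, and I would handle it by feeding a suitable neighbourhood basis into Lemma~\ref{birkhoff-kakutani}.
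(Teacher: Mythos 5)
Your cycle runs in the opposite direction to the paper's (you do $(1)\Rightarrow(2)\Rightarrow(3)\Rightarrow(1)$, the paper does $(3)\Rightarrow(2)\Rightarrow(1)\Rightarrow(3)$), and two of your three legs are sound. Your $(3)\Rightarrow(1)$ is a clean direct argument that bypasses Lemma~\ref{facts geodecity}(2) entirely: the relative property (OB) of $B_\eps$ bounds $\partial$ along $\rho_{B_\eps}$-geodesic words, and composing with $\rho_{B_\eps}\lesssim d$ gives $\partial\lesssim d$. Your $(1)\Rightarrow(2)$ correctly identifies the one delicate point --- that maximality, not mere metric properness, is what forces some ball $B_k$ to generate $G$ --- and your sketched repair (a strictly increasing chain of proper open subgroups $\langle B_n\rangle$ fed into Lemma~\ref{birkhoff-kakutani} to manufacture a compatible left-invariant metric violating maximality) is exactly the paper's argument; the detail you leave implicit is that the $V_n=\langle B_n\rangle$, being subgroups, automatically satisfy $V_n^3\subseteq V_{n+1}$ and so splice cleanly with a decreasing neighbourhood basis at $1$.

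The genuine gap is in $(2)\Rightarrow(3)$. From large scale geodecity you produce $g=g_1\cdots g_n$ with each $g_i\in B_\eps$ and $\sum_i d(g_i,1)\leqslant K\cdot d(g,1)$, and you conclude $\rho_{B_\eps}(g,1)\leqslant n\lesssim d(g,1)$. But the definition of large scale geodecity imposes no lower bound on the individual increments $d(z_i,z_{i+1})$, so $n$ is not controlled by $\sum_i d(g_i,1)$: a chain of $10^6$ steps each of length $10^{-6}$ has total length $1$ yet $n=10^6$. As written, $\rho_{B_\eps}\lesssim d$ does not follow. The repair is the coalescing trick already present in the proof of Lemma~\ref{construction maximal metrics}: whenever two consecutive increments satisfy $g_ig_{i+1}\in B_\eps$, merge them into a single term (this does not increase $\sum_i d(g_i,1)$, by the triangle inequality); once no further merges are possible, every consecutive pair has $d(g_i,1)+d(g_{i+1},1)>\eps$, so at least $\frac{n-1}2$ of the terms satisfy $d(g_i,1)>\frac\eps2$, whence $n\leqslant 1+\frac4\eps\sum_i d(g_i,1)\leqslant 1+\frac{4K}\eps\, d(g,1)$. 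With that inserted your implication, and hence the whole cycle, goes through. Alternatively, you could sidestep the issue by proving $(2)\Rightarrow(1)$ directly from Lemma~\ref{facts geodecity}(2) (bornologous plus large scale geodesic implies Lipschitz for large distances), as the paper does, and then reach $(3)$ via your $(1)\Rightarrow(2)$ machinery.
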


\begin{proof}(3)$\saa$(2): We note that the word metric $\rho_{B_\eps}$ is simply the shortest path metric on the Cayley graph of $(G,B_\eps)$, i.e., the graph whose vertex set is $G$ and whose  edges are $\{g,gs\}$, for $g\in G$ and $s\in B_\eps$. Thus, $(G,\rho_{B_\eps})$ is large scale geodesic and, since $d$ is quasi-isometric to $\rho_{B_\eps}$, so is  $(G,d)$.

(2)$\saa$(1): Assume that $(G,d)$ is large scale geodesic with constant $K\geqslant 1$. Suppose $\partial$ is any other compatible left-invariant metric on $G$. Since $d$ is metrically proper, the identity map from $(G,d)$ to $(G,\partial)$ is bornologous. By Lemma \ref{facts geodecity}, it follows that it is also Lipschitz for large distances, showing the maximality of $d$.

(1)$\saa$(3): Suppose $d$ is maximal. We claim that $G$ is generated by some closed ball $B_k=\{g\in G\del d(g,1)\leqslant k\}$. Note that, if this fails, then $G$ is the increasing union of the chain of proper open subgroups $V_n=\langle B_n\rangle$, $n\geqslant 1$. However, it is now easy, using Lemma \ref{birkhoff-kakutani}, to construct a new metric from the $H_k$ contradicting the maximality of $d$. First, complementing with a neighbourhood basis $V_0\supseteq V_{-1}\supseteq V_{-2}\supseteq \ldots \ni 1$ of symmetric open sets so that $V_{-n}^3\subseteq V_{-n+1}$, and letting $\partial$ denote the metric obtained via Lemma \ref{birkhoff-kakutani} from $(V_n)_{n\in \Z}$, we see that, for all $g\in B_n\setminus V_{n-1}\subseteq  V_n\setminus V_{n-1}$, we have
$$
\partial(g,1)\geqslant 2^{n-1}\geqslant n\geqslant d(g,1).
$$
Since $B_n\setminus V_{n-1}\neq \tom$ for infinitely many $n\geqslant 1$, this contradicts the maximality of $d$ and therefore  $G=V_k=\langle B_k\rangle$ for some $k\geqslant 1$. 

Let now $\partial$ denote the metric obtained from $B_k$ and $d$ via Lemma \ref{construction maximal metrics}. Then $d\leqslant \partial$ and, since $d$ is maximal, we have $\partial\leqslant K\cdot d+C$ for some constants $K,C$, showing that $d$ and $\partial$ are quasi-isometric. 
\end{proof}
Thus, the maximal metrics on a metrisable group $G$ are simply the compatible left-invariant metrics in the quasi-isometry class of word metrics $\rho_\Sigma$, where $\Sigma$ is any symmetric open generating set with property (OB) relative to $G$.

\begin{thm}
The following are equivalent for a separable metrisable group $G$, 
\begin{enumerate}
\item $G$ admits  a maximal compatible left-invariant metric $d$ ,
\item $G$ is generated by an open set with property (OB) relative to $G$,
\item $G$ has the local property (OB) and is not the union of a chain of proper open subgroups.
\end{enumerate}
\end{thm}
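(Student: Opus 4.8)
The plan is to establish the cycle of equivalences by proving the two biconditionals $(1)\Leftrightarrow(2)$ and $(2)\Leftrightarrow(3)$, leaning on Proposition \ref{char maximal metric}, Theorem \ref{existence of metrically proper}, Lemma \ref{construction maximal metrics} and the fact (recorded after the definition of maximal) that a maximal metric is automatically metrically proper.

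For $(1)\Rightarrow(2)$, I would argue directly: as $d$ is maximal it is metrically proper, so Proposition \ref{char maximal metric} supplies an $\eps>0$ with $B_\eps$ generating $G$. The open ball $U=\{g\in G\del d(g,1)<\eps+1\}$ then contains $B_\eps$, hence generates $G$, is open, and has finite $d$-diameter; by metric properness (Lemma \ref{metrically proper}) it has property (OB) relative to $G$, which is exactly (2). The substantive direction is $(2)\Rightarrow(1)$. Given an open generating $U$ with property (OB), I first note that $G$ has local property (OB), since for any $u_0\in U$ the translate $u_0\inv U\ni 1$ is an open neighbourhood with property (OB) by the ideal property; Theorem \ref{existence of metrically proper} then yields a metrically proper compatible left-invariant metric $d$. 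Since $U$ has property (OB) it has finite $d$-diameter, so $U\subseteq B_\eps$ for some $\eps$ and $B_\eps$ generates $G$. Feeding $B_\eps$ into Lemma \ref{construction maximal metrics} produces $\partial\geqslant d$, compatible, left-invariant, and quasi-isometric to $\rho_{B_\eps}$. To see $\partial$ is maximal I verify the hypotheses of Proposition \ref{char maximal metric}: it is large scale geodesic because $\rho_{B_\eps}$ is a Cayley-graph path metric and large scale geodecity is a quasi-isometry invariant (Lemma \ref{facts geodecity}); and it is metrically proper because property (OB) sets trivially have finite $\partial$-diameter, while any $\partial$-bounded set lies in some $(B_\eps)^k$, which has property (OB) as $B_\eps$ does (a finite $d$-diameter ball) and these sets form an ideal closed under products.

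For $(2)\Leftrightarrow(3)$ I would proceed as follows. In $(2)\Rightarrow(3)$ the local property (OB) is obtained by the same translation of $U$ as above; and if $G=\bigcup_m K_m$ were a union of a chain of proper open subgroups, then Lemma \ref{char of rel OB} applied to the open neighbourhood $K_1\ni1$ gives $U\subseteq(FK_1)^k$ for a finite $F$ and some $k$, and absorbing $F\cup K_1$ into a single $K_M$ (using that the $K_m$ are subgroups) forces $U\subseteq K_M$, hence $G=\langle U\rangle\subseteq K_M$, contradicting properness. For $(3)\Rightarrow(2)$ I would use separability decisively: fix an open $U\ni1$ of property (OB) and a countable dense set $\{g_n\}$, and set $H_n=\langle U\cup\{g_1,\ldots,g_n\}\rangle$. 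Each $H_n$ is an open subgroup, the chain is increasing, and its union is a clopen (open subgroups are closed) dense subgroup, hence all of $G$. Since $G$ is not a union of a chain of proper open subgroups, some $H_n=G$; then $FU$ with $F=\{1,g_1,\ldots,g_n\}$ is an open generating set of property (OB), establishing (2).

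The main obstacle, and the genuinely new content relative to the earlier machinery, is the equivalence $(2)\Leftrightarrow(3)$, and within it the role of separability in $(3)\Rightarrow(2)$: this is what converts the abstract hypothesis ``no exhausting chain of proper open subgroups'' into the concrete existence of a single open property-(OB) generating set. By contrast, $(2)\Rightarrow(1)$ is mostly bookkeeping on top of Lemma \ref{construction maximal metrics} and Proposition \ref{char maximal metric}, the only point requiring care being to verify \emph{both} halves of metric properness for the constructed metric $\partial$ (the easy half from the definition of property (OB), the other from the quasi-isometry with $\rho_{B_\eps}$).
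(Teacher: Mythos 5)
Your proof is correct and follows essentially the same route as the paper: the two substantive implications, (3)$\Rightarrow$(2) via separability and absorption of a finite set, and (2)$\Rightarrow$(1) via Lemma \ref{construction maximal metrics} together with Proposition \ref{char maximal metric}, coincide with the paper's, which closes the cycle instead by (1)$\Rightarrow$(3). Your reorganisation into two biconditionals only adds the easy direct implications (1)$\Rightarrow$(2) and (2)$\Rightarrow$(3), and you are in fact slightly more careful than the paper in explicitly checking that the metric $\partial$ produced by Lemma \ref{construction maximal metrics} is metrically proper before invoking Proposition \ref{char maximal metric}.
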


\begin{proof}
(3)$\saa$(2): Assume (3) and let $U\ni 1$ be an open set with property (OB) relative to $G$. Since $G$ is separable and is not the union of a chain of proper open subgroups, there must be a finite set $1\in F\subseteq G$ so that $G$ is generated by $UF$, which also has property (OB) relative to $G$. 

(2)$\saa$(1): Let $U$ be any open generating set with property (OB) relative to $G$ and put $V=(U\cup \{1\})(U\cup\{1\})\inv$. Then $V$ has property (OB) relative to $G$ and so $G$ has the local property (OB). We can therefore choose some metrically proper compatible left-invariant metric $d$ on $G$ and let $\eps>0$ be so that 
$$
V\subseteq B_\eps=\{g\in G\del d(g,1)\leqslant \eps\}.
$$
Let now $\partial$ denote the compatible left-invariant metric on $G$ given by Lemma \ref{construction maximal metrics}. By Proposition \ref{char maximal metric}, $\partial$ is maximal.

(1)$\saa$(3): If $d$ is maximal, it is metrically proper and thus $G$ has the local property (OB). Moreover, as in the proof of (1)$\saa$(3) in Proposition \ref{char maximal metric}, $G$ cannot be the union of a countable chain of proper open subgroups.
\end{proof}

Thus far, we have been able to one the one hand characterise the maximal metrics and also characterise the groups admitting such metrics. However, oftentimes it will be useful to have other criteria that guarantee existence. In the context of finitely generated groups, the main such criteria is the Milnor--\v{S}varc lemma \cite{milnor, svarc} of which we will have a close analogue. For this, we will need the following definition.
\begin{defi}
An isometric group action $G\curvearrowright (X,d)$ on a metric space is said to be {\em cobounded} if, for all $x\in X$, 
$$
\sup_{y\in X}d(y,G\cdot x)<\infty,
$$
or, equivalently, there is a set $A\subseteq X$ of finite diameter so that $X=G\cdot A$.
\end{defi}

\begin{thm}[First Milnor--\v{S}varc lemma]\label{milnor connected}
Suppose $G$ is a metrisable group with a metrically proper cobounded continuous isometric action $G\curvearrowright (X,d)$ on a connected metric space. Then $G$ admits a maximal compatible left-invariant metric $\partial$.
\end{thm}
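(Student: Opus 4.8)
The plan is to reduce the statement to producing a symmetric open neighbourhood $U\ni 1$ that generates $G$ and has property (OB) relative to $G$, and then to manufacture a maximal metric from it by hand, avoiding any appeal to separability (so that Lemma \ref{main OB} and the separable existence theorem are unavailable). Fix a base point $x_0\in X$ and set $R=\sup_{y\in X}d(y,G\cdot x_0)+1<\infty$ using coboundedness. For $\alpha>0$ write $S_\alpha=\{g\in G\del d(gx_0,x_0)\leqslant\alpha\}$. First I would note that every $S_\alpha$ has property (OB) relative to $G$: since the action is metrically proper, a sequence $g_n\in S_\alpha$ with $g_n\til\infty$ would force $d(g_nx_0,x_0)\til\infty$, contradicting $d(g_nx_0,x_0)\leqslant\alpha$.

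The crux is to show that, for a suitable $\alpha$, $S_\alpha$ generates $G$. Here $X$ is only connected, not geodesic, so I cannot subdivide geodesics as in the classical Milnor--\v{S}varc argument; this is the main obstacle. Instead I would use that a connected metric space is \emph{well-chained}: for every $\eps>0$ and all $x,y\in X$ there is a finite $\eps$-chain $x=z_0,z_1,\ldots,z_n=y$ with $d(z_i,z_{i+1})<\eps$, because the relation ``joined by an $\eps$-chain'' has open, hence clopen, classes. Given $g\in G$, join $x_0$ to $gx_0$ by such a chain and, using coboundedness, pick $h_i\in G$ with $d(z_i,h_ix_0)\leqslant R$, taking $h_0=1$ and $h_n=g$. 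Then $d(x_0,h_i\inv h_{i+1}x_0)=d(h_ix_0,h_{i+1}x_0)< 2R+\eps$, so with $\alpha=2R+\eps$ each $s_i=h_i\inv h_{i+1}$ lies in $S_\alpha$, and the telescoping product $g=h_n=s_0s_1\cdots s_{n-1}$ exhibits $g$ as a word in $S_\alpha$. Thus $S_\alpha$ generates $G$, and $U=\{g\in G\del d(gx_0,x_0)<\alpha+1\}$ is an open (continuity of $g\mapsto gx_0$), symmetric (as $d(g\inv x_0,x_0)=d(x_0,gx_0)$) neighbourhood of $1$ containing $S_\alpha$; it generates $G$ and, being contained in $S_{\alpha+1}$, has property (OB) by heredity.

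Next I would build a compatible left-invariant metric with a generating ball having property (OB), directly from Lemma \ref{birkhoff-kakutani}. Apply it to the basis $(V_n)_{n\in\Z}$ with $V_n=U^{3^n}$ for $n\geqslant 0$ (so $V_n^3=V_{n+1}$, each $V_n$ symmetric, and $\bigcup_{n\geqslant 0}V_n=\langle U\rangle=G$) completed below by any symmetric open $V_0\supseteq V_{-1}\supseteq\cdots\ni 1$ with $V_{-n}^3\subseteq V_{-n+1}$. This yields a compatible left-invariant metric $d_G$ for which the associated $\delta$ satisfies $\delta\leqslant 2d_G\leqslant 2\delta$, whence $U=V_0\subseteq B_1^{d_G}\subseteq V_1=U^3$. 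Therefore the ball $B=B_1^{d_G}$ generates $G$ (it contains $U$) and, lying inside the product $U^3$ of sets with property (OB), itself has property (OB) by Lemma \ref{triangle ineq}.

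Finally I would feed $d_G$ and $\eps=1$ into Lemma \ref{construction maximal metrics} to obtain a compatible left-invariant metric $\partial$ quasi-isometric to the word metric $\rho_B$. Since $\rho_B$ is the shortest-path metric on the Cayley graph of $(G,B)$ it is large scale geodesic, and large scale geodecity is a quasi-isometric invariant (Lemma \ref{facts geodecity}), so $(G,\partial)$ is large scale geodesic. Moreover $\partial$ is metrically proper: if $g_n\til\infty$, then $g_n$ eventually leaves every set with property (OB), in particular each power $B^k$, so $\rho_B(g_n,1)\til\infty$ and hence $\partial(g_n,1)\til\infty$. With $\partial$ metrically proper and large scale geodesic, Proposition \ref{char maximal metric} ((2)$\saa$(1)) shows that $\partial$ is maximal, which completes the argument. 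One checks in addition that the orbit map $g\mapsto gx_0$ is then a quasi-isometry of $(G,\partial)$ onto $(X,d)$, recovering the full strength of the classical statement.
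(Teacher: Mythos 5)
Your proof is correct, but it takes a genuinely different route from the paper's in two respects. First, to show that a bounded set generates $G$, the paper partitions $X$ into the open sets $g\langle S\rangle\cdot A$ indexed by left cosets of $\langle S\rangle$ (where $S=\{g\del gA\cap A\neq\tom\}$ for an open bounded $A$ with $G\cdot A=X$) and invokes connectedness to conclude there is only one coset; you instead use the fact that a connected metric space is well-chained and convert an $\eps$-chain from $x_0$ to $gx_0$ into an explicit word $g=s_0\cdots s_{n-1}$ with $s_i\in S_{2R+\eps}$. Both exploit connectedness, but yours is more quantitative. Second, and more substantively, the paper finishes by appealing to the equivalence ``$G$ is generated by an open set with property (OB) $\saa$ $G$ admits a maximal metric,'' which is stated and proved only for \emph{separable} $G$ (its proof routes through Theorem \ref{existence of metrically proper}, which genuinely requires separability, as the $\F_{\aleph_1}$ example shows). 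Since Theorem \ref{milnor connected} is stated for arbitrary metrisable groups, your direct construction via Lemma \ref{birkhoff-kakutani} with $V_n=U^{3^n}$, followed by Lemma \ref{construction maximal metrics} and the verification that $\partial$ is metrically proper because each $B^k$ has property (OB), is a real gain: it proves the theorem as literally stated, with no separability assumption. All the individual steps check out, including the sandwich $U=V_0\subseteq B_1^{d_G}\subseteq V_1=U^3$ coming from $\delta\leqslant 2d_G\leqslant 2\delta$.

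One caveat: your closing sentence, that the orbit map $g\mapsto gx_0$ is a quasi-isometry onto $(X,d)$, is an overclaim for merely connected $X$ and should be deleted. For example, let $X=\R$ with the snowflaked metric $d(x,y)=\sqrt{|x-y|}$ and let $\Z$ act by translation: the action is isometric, cobounded and metrically proper, and $\partial$ is quasi-isometric to the word metric on $\Z$, yet $d(nx_0,x_0)=\sqrt{n}$ while $\partial(n,0)\asymp n$, so the orbit map fails the lower quasi-isometry bound. The chain argument gives no control on the \emph{length} of an $\eps$-chain in terms of $d(gx_0,fx_0)$; that control is exactly what large scale geodecity provides, which is why the quasi-isometry conclusion is reserved for Theorem \ref{milnor large scale geodesic}. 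This does not affect the validity of your proof of the statement actually at issue.
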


\begin{proof}Since the action is cobounded, there is an open set $A\subseteq X$ of finite diameter so that $G\cdot A=X$.
We let 
$$
S=\{g\in G\del g\cdot A\cap A\neq \tom\}
$$
and observe that $S$ is an open neighbourhood of $1$ in $G$. Note that, since the action is metrically proper and ${\rm diam}_d(S\cdot x)<\infty$ for all $x\in A$, there is no sequence $g_n\til \infty$ in $S$, or, equivalently, $S$ has property (OB) relative to $G$.

To see that $G$ admits a maximal compatible left-invariant metric $\partial$, it now suffices to verify that $G$ is generated by $S$. For this, observe that, if $g,f\in G$, then
\[\begin{split}
\Big(g\langle S\rangle \cdot A\Big)\cap \Big( f\langle S\rangle\cdot A\Big)\neq \tom
&\saa\Big( \langle S\rangle f\inv g \langle S\rangle\cdot A\Big)\cap A\neq \tom\\
&\saa\Big( \langle S\rangle f\inv g \langle S\rangle\Big)\cap S\neq \tom\\
&\saa  f\inv g\in  \langle S\rangle\\
&\saa g\langle S\rangle= f\langle S\rangle.
\end{split}\]
Thus, distinct left cosets $g\langle S\rangle$ and $f\langle S\rangle$ give rise to disjoint open subsets $g\langle S\rangle\cdot A$ and $f\langle S\rangle \cdot A$ of $X$. However, $X=\bigcup_{g\in G}g\langle S\rangle\cdot A$ and $X$ is connected, which implies that there can only be a single left coset of $\langle S\rangle$, i.e., $G=\langle S\rangle$.
\end{proof}

\begin{thm}[Second Milnor--\v{S}varc lemma]\label{milnor large scale geodesic}
Suppose $G$ is a metrisable group with a metrically proper cobounded continuous isometric action $G\curvearrowright (X,d)$ on a large scale geodesic metric space. Then $G$ admits a maximal compatible left-invariant metric $\partial$.
Moreover, for every $x_0\in X$, the map
$$
g\in G\mapsto gx_0\in X
$$
is a quasi-isometry between $(G,\partial)$ and $(X,d)$.
\end{thm}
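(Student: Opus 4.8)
The plan is to build on the First Milnor--\v{S}varc lemma (Theorem~\ref{milnor connected}), which already produces a maximal compatible left-invariant metric $\partial$ once we know the generating set $S=\{g\in G\del g\cdot A\cap A\neq\tom\}$ has property (OB) relative to $G$ and generates $G$. The new content is the final clause: that the orbit map $g\mapsto gx_0$ is a \emph{quasi-isometry} between $(G,\partial)$ and $(X,d)$. The coboundedness hypothesis gives an open set $A$ of finite diameter with $G\cdot A=X$; fixing $x_0\in A$, coboundedness immediately yields that the orbit map is cobounded, so the real work is to show it is a quasi-isometric embedding.

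\textbf{Constructing the generating set and verifying it is (OB).}
First I would reprise the opening of Theorem~\ref{milnor connected}: let $A$ be an open set of finite diameter with $G\cdot A=X$, set $S=\{g\in G\del gA\cap A\neq\tom\}$, and recall from that proof that $S$ is an open symmetric neighbourhood of $1$ with property (OB) relative to $G$ and that $G=\langle S\rangle$ --- here using large scale geodecity only via path-connectedness, which a large scale geodesic space need not literally have, so I must instead argue generation directly from the large scale geodesic structure. This is where I would adapt rather than merely invoke the earlier lemma: given the constant $K$ witnessing large scale geodecity, I would enlarge $S$ to $S'=\{g\in G\del gA\cap B(A,K)\neq\tom\}$ where $B(A,K)$ is the $K$-neighbourhood of $A$; then $S'$ still has property (OB) (since $S'\cdot x$ has finite diameter for $x\in A$ by metric properness of the action), and the large scale geodesic chains let any two orbit points be joined by $S'$-steps, forcing $G=\langle S'\rangle$. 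By Proposition~\ref{char maximal metric}, the metric $\partial$ constructed from $B_\eps\supseteq S'$ via Lemma~\ref{construction maximal metrics} is maximal and is quasi-isometric to the word metric $\rho_{S'}$.

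\textbf{The orbit map is a quasi-isometry.}
The heart is comparing $\rho_{S'}(g,h)$ with $d(gx_0,hx_0)$. For one inequality, if $\rho_{S'}(1,g)=n$, writing $g=s_1\cdots s_n$ with $s_i\in S'$ and using that each $s_i$ moves $x_0$ a bounded amount $d(s_ix_0,x_0)\leqslant R:=\mathrm{diam}_d(S'\cdot x_0)+\mathrm{diam}_d(A)$, the triangle inequality gives $d(gx_0,x_0)\leqslant Rn=R\cdot\rho_{S'}(1,g)$, so the orbit map is Lipschitz for large distances. For the reverse, I would use large scale geodecity in $X$: given $g$, join $x_0$ to $gx_0$ by a chain $z_0=x_0,z_1,\ldots,z_m=gx_0$ with $d(z_i,z_{i+1})\leqslant K$ and $\sum d(z_i,z_{i+1})\leqslant K\,d(x_0,gx_0)$, so $m\leqslant K\,d(x_0,gx_0)/c$ for the minimal step-length $c>0$ (discarding degenerate steps); choosing $g_i\in G$ with $z_i\in g_iA$ (possible as $G\cdot A=X$), each $g_i\inv g_{i+1}$ lies in $S'$ by the definition involving the $K$-neighbourhood, whence $\rho_{S'}(1,g)\leqslant m+O(1)\leqslant K'\,d(x_0,gx_0)+K'$. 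Combining, $\rho_{S'}$ and the pulled-back metric $d(gx_0,hx_0)$ are quasi-isometric, and since $\partial\sim\rho_{S'}$, the orbit map is a quasi-isometric embedding; coboundedness finishes it.

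\textbf{Main obstacle.}
The delicate point --- and the reason one cannot cite Theorem~\ref{milnor connected} verbatim --- is that large scale geodesic spaces need not be topologically connected, so the clean connectedness argument producing $G=\langle S\rangle$ breaks down and must be replaced by the chain argument with $K$-enlarged $S'$. I expect the bookkeeping in converting an $X$-chain of bounded steps into an $S'$-word of comparable length (and bounding $\rho_{S'}$ below in terms of $d$) to be the step requiring the most care, particularly ensuring the chosen coset representatives $g_i$ yield genuine $S'$-increments and that degenerate zero-length steps do not inflate the word length.
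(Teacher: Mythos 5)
Your architecture is the classical Milnor--\v{S}varc argument and it can be made to work, but it is genuinely different from --- and considerably longer than --- the proof in the paper, and it contains one step that fails as written. The paper sidesteps generating sets entirely: it fixes a bounded compatible left-invariant metric $D\leqslant 1$ on $G$ and sets $\partial(g,f)=D(g,f)+d(gx_0,fx_0)$. This is immediately a compatible left-invariant metric; it is metrically proper because the action is; the orbit map is a quasi-isometry onto a cobounded subset because $\partial$ differs from the pulled-back metric by at most $1$; and then $(G,\partial)$ inherits large scale geodecity from $(X,d)$ via Lemma \ref{facts geodecity}, so Proposition \ref{char maximal metric} ((2)$\Rightarrow$(1)) yields maximality for free. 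Note also that your route, which goes through Lemma \ref{construction maximal metrics} and hence needs a metrically proper compatible metric on $G$ to start from, implicitly invokes Theorem \ref{existence of metrically proper}, which the paper only proves for separable $G$; the direct construction avoids that hypothesis.

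The step that fails: you bound the number of steps $m$ in a large scale geodesic chain from $x_0$ to $gx_0$ by $K\,d(x_0,gx_0)/c$ ``for the minimal step-length $c>0$''. Large scale geodecity gives $d(z_i,z_{i+1})\leqslant K$ and $\sum_i d(z_i,z_{i+1})\leqslant K\,d(x_0,gx_0)$, but no positive lower bound on the individual steps, so after discarding only the zero-length steps the minimal step-length can still be arbitrarily small and $m$ is not controlled. The standard repair is to coalesce: as long as some two consecutive steps satisfy $d(z_i,z_{i+1})+d(z_{i+1},z_{i+2})\leqslant K$, delete $z_{i+1}$; this keeps every step $\leqslant K$ and does not increase the total length, and when the process terminates every pair of consecutive steps sums to more than $K$, whence $(m-1)K< 2\sum_i d(z_i,z_{i+1})\leqslant 2K\,d(x_0,gx_0)$ and so $m\leqslant 2\,d(x_0,gx_0)+1$. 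With that patch the rest of your argument (the definition of $S'$, its symmetry and relative property (OB), generation of $G$, and the two quasi-isometry inequalities) goes through.
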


\begin{proof}
Fix some $x_0\in X$ and a compatible left-invariant metric $D\leqslant 1$ on $G$. Then, by the continuity of the action, the following defines a compatible left-invariant metric $\partial$ on $G$
$$
\partial (g,f)=D(g,f)+d(gx_0,fx_0).
$$ 
Moreover, since the action is metrically proper, so is the metric $\partial$. Now, since $D\leqslant 1$ and the action is cobounded, we see that $g\in G\mapsto gx_0\in X$ is a quasi-isometry between $(G,\partial)$ and $(X,d)$. Since $(X,d)$ is large scale geodesic, so is $(G,\partial)$, whence $\partial$ is maximal by Proposition \ref{char maximal metric}. 
\end{proof}

\begin{exa}\label{banach2}
Consider again the additive group $(X,+)$ of a Banach space $(X,\norm\cdot)$. By Example \ref{banach1}, the norm-metric is metrically proper on $(X,+)$ and hence the shift-action of $(X,+)$ on $X$ itself is a metrically proper transitive isometric action. Since also the norm-metric on $X$ is geodesic, it follows from Theorem \ref{milnor large scale geodesic}, that the identity map is a quasi-isometry between the abelian topological group $(X,+)$ and the Banach space $(X,\norm\cdot)$. In other words, the quasi-isometry type of the group $(X,+)$ is none other than the quasi-isometry type of $(X,\norm\cdot)$ itself, showing that, for Banach spaces, our theory coincides with the usual large scale geometry. 
\end{exa}

\begin{exa}\label{banach3}
Assume  $(X,\norm\cdot)$ is a separable Banach space. Then, by the Mazur--Ulam theorem, every surjective isometry $A\colon X\til X$ is affine, meaning that there is a linear isometry $f\colon X\til X$ and a vector $x\in X$ so that $A(y)=f(y)+x$ for $y\in X$. 

Now, let ${\rm Aff}(X,\norm\cdot)$ be the group of affine isometries of $(X,\norm\cdot)$ equipped with the topology of pointwise convergence on $(X,\norm\cdot)$, i.e., $g_i\til g$ if and only if $\norm{g_i(x)- g(x)}\til 0$ for all $x\in X$, and let ${\rm Isom}(X,\norm\cdot)$ be the closed subgroup consisting of linear isometries. Here the induced topology on ${\rm Isom}(X,\norm\cdot)$ is simply the strong operator topology. 
Then ${\rm Aff}(X,\norm\cdot)$ is a metrisable topological group and may be written as a topological semi-direct product 
$$
{\rm Aff}(X,\norm\cdot)={\rm Isom}(X,\norm\cdot)\ltimes (X,+).
$$
That is, ${\rm Isom}(X,\norm\cdot)\ltimes (X,+)$ is the topological product space ${\rm Isom}(X,\norm\cdot)\times (X,+)$ equipped with the group operation $(f,x)*(g,y)=(fg,x+f(y))$. 

Suppose now that ${\rm Isom}(X,\norm\cdot)$ has property (OB). We claim that ${\rm Aff}(X,\norm\cdot)$ is  quasi-isometric to $(X,\norm\cdot)$. To see this, note that ${\rm Isom}(X,\norm\cdot)$ and $(X,+)$ embed into ${\rm Isom}(X,\norm\cdot)\ltimes (X,+)$ via $f\mapsto (f,0)$ and $x\mapsto ({\rm Id}, x)$ respectively. Thus, as the  ball $B_\alpha=\{x\in X\del \norm{x}\leqslant \alpha\}$ has property (OB) relative to $(X,+)$ (cf. Example \ref{banach2}) and ${\rm Isom}(X,\norm\cdot)$ has property (OB),  the subsets $\{{\rm Id}\}\times B_\alpha$ and ${\rm Isom}(X,\norm\cdot)\times \{0\}$ have property (OB) relative to ${\rm Isom}(X,\norm\cdot)\ltimes (X,+)$. Therefore, also
$$
{\rm Isom}(X,\norm\cdot)\times B_\alpha= \Big(\{{\rm Id}\}\times B_\alpha\Big)*\Big({\rm Isom}(X,\norm\cdot)\times \{0\}\Big)
$$
has property (OB) relative to ${\rm Isom}(X,\norm\cdot)\ltimes (X,+)$ for all $\alpha$. Thus, if $(f_n, x_n)$ is a sequence so that $(f_n,x_n)\til \infty$, we see that $(f_n,x_n)$ must eventually leave each set ${\rm Isom}(X,\norm\cdot)\times B_\alpha$, which implies that $\norm{x_n}\til \infty$. In particular, this shows that the canonical isometric action 
$$
{\rm Isom}(X,\norm\cdot)\ltimes (X,+)\curvearrowright (X,\norm\cdot)
$$
given by $(f,x)(y)=f(y)+x$ is metrically proper. Since it is evidently transitive, we conclude by Theorem \ref{milnor large scale geodesic}, that the mapping $(f,x)\mapsto (f,x)(0)=x$ is a quasi-isometry between ${\rm Isom}(X,\norm\cdot)\ltimes (X,+)$ and $(X,\norm\cdot)$. I.e., ${\rm Aff}(X,\norm\cdot)$ is  quasi-isometric to $(X,\norm\cdot)$ as claimed.
\end{exa}

\begin{exa}
Since the unitary group $U(\ku H)$ of separable infinite-dimensional Hilbert space has property (OB) in the discrete topology \cite{ricard} and thus also in the strong operator topology, by Example \ref{banach3}, we see that the group of affine isometries of $\ku H$ is quasi-isometric to $\ku H$ itself.

Also,  S. Banach described the linear isometry groups of $\ell^p$, $1<p<\infty$, as consisting entirely of sign changes and permutations of the basis elements. Thus, the isometry group is the semi-direct product $S_\infty\ltimes \{-1,1\}^\N$ of two groups with property (OB). By Proposition 4.1 \cite{OB}, it follows that ${\rm Isom}(\ell^p)$ has property (OB) and thus, by Example \ref{banach2}, that the affine isometry group ${\rm Aff}(\ell^p)$ is  quasi-isometric to $\ell^p$.

By results due to C. W. Henson \cite{henson},  the $L^p$-lattice $L^p([0,1],\lambda)$, with $\lambda$ being Lebesgue measure and $1<p<\infty$, is $\omega$-categorical in  the sense of model theory for metric structures. This also implies that the Banach space reduct  $L^p([0,1],\lambda)$ is $\omega$-categorical and hence that the action by its isometry group  on the unit ball  is approximately oligomorphic. By Theorem 5.2 \cite{OB}, it follows that the isometry group ${\rm Isom}(L^p)$ has property (OB) and thus, as before, that the affine isometry group ${\rm Aff}(L^p)$ is quasi-isometric to $L^p$.

Now, by results of W. B. Johnson, J. Lindenstrauss and G. Schechtman \cite{johnson} (see also Theorem 10.21 \cite{lindenstrauss}), any Banach space quasi-isometric to $\ell^p$ for $1<p<\infty$ is, in fact,  linearly isomorphic to $\ell^p$. Also, for $1<p<q<\infty$, the spaces $L^p$ and $L^q$ are not coarsely equivalent since they then would be quasi-isometric (being geodesic spaces) and, by taking ultrapowers, would be Lipschitz equivalent, contradicting Corollary 7.8 \cite{lindenstrauss}.

Thus, it follows that all of ${\rm Aff}(\ell^p)$ and ${\rm Aff}(L^p)$ for $1<p<\infty$ have distinct quasi-isometry types and, in particular, cannot be isomorphic as topological groups.
\end{exa}

\begin{exa}\label{semidirect}
Suppose $d$ is a compatible left-invariant metric on a topological group $G$ and $K\normal G$ is closed normal subgroup of finite diameter. Then the Hausdorff distance $d_H$ on the quotient space $G/K$, defined by 
$$
d_H(gK,fK)=\max\big\{\sup_{a\in gK}\inf_{b\in fK}d(a,b),  \sup_{b\in fK}\inf_{a\in gK}d(a,b)\big\},
$$
is a compatible left-invariant metric on the group $G/K$ and, moreover, satisfies
$$
d_H(gK,fK)=\inf_{k\in K}d(g,fk)=\inf_{k\in K}d(gk,f).
$$

Suppose now that $G$ is a metrisable group and $K\normal G$ is a closed normal subgroup with property (OB) relative to $G$ so that the quotient group $G/K$ admits a maximal compatible left-invariant metric $\partial$. Then we claim that the quotient map $g\in G\mapsto gK\in G/K$ is a quasi-isometry  between $G$ and $G/K$.

To see this, note first that the left-shift $G\curvearrowright (G/K, \partial)$ is a transitive continuous isometric action of $G$ on a large scale geodesic metric space. By Theorem \ref{milnor large scale geodesic}, it suffices to show that the action is metrically proper. So suppose $g_n\til \infty$ in $G$ and fix a compatible left-invariant metric on $G$ witnessing this, i.e., $d(g_n,1)\til \infty$. Since $K$ must have finite $d$-diameter, it follows that $d(g_nK,1K)\til \infty$ and so, since $\partial$ is maximal on $G/K$ and $d_H$ is a compatible metric, we have $\partial(g_nK, 1K)\til \infty$. I.e., the action is metrically proper and 
$g\in G\mapsto gK\in G/K$ is a quasi-isometry.

Conversely, suppose that $d$ is a maximal compatible left-invariant metric on $G$ and that $K\normal G$ is a closed normal subgroup with property (OB) relative to $G$. Then we claim that $d_H$ is a maximal compatible left-invariant metric on $G/K$.

For this, let $\partial$ be any other compatible left-invariant metric on $G/K$. Then $D(g,f)=d(g,f)+\partial(gK,fK)$ defines a compatible left-invariant metric on $G$, whereby, using maximality of $d$,  there is a constant $C$ so that
$$
\partial(gK,fK)\leqslant D(g,f)\leqslant C\cdot d(g,f)+C
$$
for all $g,f\in G$. Without loss of generality, we make take $C\geqslant {\rm diam}_d(K)$, whereby $d_H(gK,fK)=\inf_{k\in K}d(gk,f)\geqslant d(g,f)-C$ and hence
$$
\partial(gK,fK)\leqslant C \cdot d(g,f)+C\leqslant C\cdot d_H(gK,fK)+2C,
$$
for all $g,f\in G$. In other words, $d_H$ is maximal on $G/K$.
\end{exa}

\begin{exa}\label{cameron-vershik}
P. J. Cameron and A. M. Vershik \cite{vershik} have shown that that there is an invariant metric $d$ on the group $\Z$ for which  the metric space $(\Z,d)$ is isometric to the rational Urysohn metric space $\Q\U$. 
Since $d$ is two-sided invariant, the topology $\tau$ it induces on $\Z$ is necessarily a group topology, i.e., the group operations are continuous. Thus, $(\Z,\tau)$ is a metrisable topological group and we claim that $(\Z,\tau)$ has a well-defined quasi-isometry type, namely, $\U$ or, equivalently, $\Q\U$.

To see this, we first verify that $d$ is metrically proper on $(\Z, \tau)$. For this, note that, since $(\Z, \tau)$ is isometric to $\Q\U$, we have that, for all $n,m \in \Z$ and $\eps>0$, if $r=\lceil\frac{d(n,m)}\eps\rceil$, then there are $k_0=n, k_1, k_2, \ldots, k_r=m\in\Z$ so that $d(k_{i-1},k_i)\leqslant \eps$. 
Thus, as $r$ is a function only of $\eps$ and of the distance $d(n,m)$, we see that $d$ satisfies the criteria in Example \ref{geodesic proper} and hence is metrically proper on $(\Z,\tau)$. Also, as $\Q\U$ is large scale geodesic, so is $(\Z,d)$. It follows that the shift action of the topological group  $(\Z,\tau)$ on $(\Z,d)$ is a metrically proper transitive action on a large scale geodesic space. So, by Theorem \ref{milnor large scale geodesic}, the identity map is a quasi-isometry between the topological group $(\Z,\tau)$ and the metric space $(\Z,d)$. As the latter is quasi-isometric to $\Q\U$, so is $(\Z,\tau)$. 

By taking the completion of $(\Z,\tau)$, this also provides us with monothetic Polish groups quasi-isometric to the Urysohn space  $\U$.
\end{exa}


\section{Affine actions on Banach spaces}
By the Mazur--Ulam Theorem, every surjective isometry $A$ of a Banach space $X$ is {\em affine}, that is, there are a unique invertible linear isometry $T\colon X\til X$ and a vector $\eta\in X$ so that $A$ is given by $A(\xi)=T(\xi)+\eta$ for all $\xi \in X$. 
It follows that, if $\alpha\colon G\curvearrowright X$ is an isometric action of a group $G$ on a Banach space $X$, there is an isometric  linear representation $\pi\colon G\curvearrowright X$, called the {\em linear part} of $\alpha$, and a corresponding {\em cocycle} $b\colon G\til X$ so that
$$
\alpha(g)\xi=\pi(g)\xi +b(g)
$$
for all $g\in G$ and $\xi \in X$. The cocycle $b$ then satisfies the {\em cocycle equation}
$$
b(gf)=\pi(g)b(f)+b(g)
$$
for $g,f\in G$. In particular, for all $g,f\in G$, it follows that $\alpha(g)0=b(g)$ and $b(gg\inv )=b(1)=0$, whereby
\[\begin{split}
\norm{ b(f)-b(g)}&=\norm{b(f)+b(g g\inv )-b(g)}\\
&=\norm{b(f)+\pi(g)b(g\inv)}\\
&=\norm{\pi(g\inv )b(f)+b(g\inv )}\\
&=\norm{b(g\inv f)}\\
&=\norm{\alpha(g\inv f)0-0}.
\end{split}\]

Using this, one sees that, if $G$ is a metrisable group with a compatible  left-invariant metric $d$ and the action $\alpha\colon G\curvearrowright X$ is continuous, then the cocycle $b\colon (G,d)\til X$ is actually  uniformly continuous. Moreover, if the metric $d$ is metrically proper, then, as $\alpha(B)0$ is norm bounded for every $d$-bounded set $B\subseteq G$, $b\colon (G,d)\til X$ is also bornologous. 
Finally, as$ \norm{ b(f)-b(g)}=\norm{b(g\inv f)}$, the cocycle $b\colon (G,d)\til X$ is a coarse embedding if and only if it is metrically proper, which, as $\norm{b( f)}=\norm{\alpha(f)0-0}$, happens if and only if $\alpha$ is a metrically proper action.

\begin{obs}\label{cocycle}
Assume $G$ is a metrisable group with a compatible  metrically proper left-invariant metric $d$. Suppose  $\alpha\colon G\curvearrowright X$ is a  a continuous affine isometric action on a Banach space $X$ with associated cocycle $b$. Then $b\colon G\til X$ is a coarse embedding if and only if $\alpha$, or equivalently $b$, is metrically proper.
\end{obs}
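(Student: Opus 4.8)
The plan is to run everything through the cocycle homogeneity identity $\norm{b(f)-b(g)}=\norm{b(g\inv f)}$ recorded just above, together with the left-invariance of $d$: jointly these reduce every two-variable statement to a one-variable statement comparing the length function $g\mapsto\norm{b(g)}$ with $g\mapsto d(g,1)$. Before the main equivalence I would settle the auxiliary claim that the map $b$ is metrically proper if and only if the action $\alpha$ is, which is what the phrase ``or equivalently $b$'' in the statement records. By reformulation (i) of metric properness and homogeneity, $b$ is metrically proper as a map exactly when $\norm{b(g_n)}\til\infty$ whenever $d(g_n,1)\til\infty$; since $d$ is assumed metrically proper, the condition $d(g_n,1)\til\infty$ is the same as $g_n\til\infty$; and since the linear part $\pi$ is isometric one has $\big|\norm{\alpha(g)\xi-\xi}-\norm{b(g)}\big|\leqslant 2\norm\xi$ for every $\xi\in X$, so $\norm{b(g_n)}\til\infty$ is equivalent to $\norm{\alpha(g_n)\xi-\xi}\til\infty$ for all $\xi$, i.e.\ to metric properness of $\alpha$.

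The implication ``coarse embedding $\saa$ metrically proper'' is then immediate: a coarse embedding is by definition expanding, and expanding is just the uniform form of being metrically proper (formulation (ii)), so $b$ is metrically proper as a map, hence $\alpha$ is metrically proper by the auxiliary claim.

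The content lies in the converse. Assuming $\alpha$, equivalently $b$, is metrically proper, I would show $b$ is both bornologous and expanding. Bornologousness is essentially free and uses only that $d$ is metrically proper: every $d$-bounded set $B\subseteq G$ has property (OB) relative to $G$ by Lemma~\ref{metrically proper}(2), and any set with property (OB) relative to $G$ has a bounded orbit under the continuous isometric action $\alpha$ on $(X,\norm\cdot)$ (the opening remark on bounded orbits); since $b[B]=\alpha(B)0$, this orbit is exactly $b[B]$, which is therefore norm-bounded, giving $\kappa_2(t)<\infty$ for all $t$.

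The remaining, and only genuinely delicate, step is upgrading pointwise metric properness of $b$ to the uniform condition of being expanding; this is where I expect the one subtlety to sit, since for arbitrary maps ``expanding'' is strictly stronger than ``metrically proper'' and the gap is closed here precisely by homogeneity. I would argue by contradiction: if $b$ is not expanding, there are $R>0$ and sequences $g_n,f_n$ with $d(g_n,f_n)\geqslant n$ yet $\norm{b(g_n)-b(f_n)}\leqslant R$. Setting $h_n=f_n\inv g_n$, left-invariance gives $d(h_n,1)=d(g_n,f_n)\til\infty$, while homogeneity gives $\norm{b(h_n)}=\norm{b(g_n)-b(f_n)}\leqslant R$. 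Then $A=\{1\}\cup\{h_n\}$ has infinite $d$-diameter but $b[A]$ lies in the ball of radius $R$ about $0$, so has diameter at most $2R$, contradicting that the map $b$ is metrically proper. Thus $b$ is expanding, and combined with bornologousness it is a coarse embedding, which completes the equivalence.
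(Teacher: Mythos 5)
Your proof is correct and follows essentially the same route as the paper, whose ``proof'' of this observation is the discussion immediately preceding it: the homogeneity identity $\norm{b(f)-b(g)}=\norm{b(g\inv f)}$ reduces the coarse embedding condition to one-variable metric properness, bornologousness comes from the norm-boundedness of $\alpha(B)0$ for $d$-bounded (hence relatively (OB)) sets $B$, and $\norm{b(f)}=\norm{\alpha(f)0-0}$ links properness of $b$ to properness of $\alpha$. You merely make explicit two steps the paper compresses, namely the subsequence argument upgrading pointwise properness to the expanding condition and the $2\norm{\xi}$ estimate extending properness at the orbit of $0$ to all orbits; both are sound.
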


We shall adopt the standard notation $Z^1(G,\pi)$ for the vector space of continuous cocycles $b\colon G\til X$ associated to $\pi$, i.e., the continuous mappings $b$ satisfying the above cocycle equation.

Our goal in the following is twofold. On the one hand, we aim to offer various converses of the above observation, namely, we aim to show that, if $G$ is a metrisable group with a compatible  metrically proper left-invariant metric admitting a coarse embedding into a Banach space with some geometric property, then $G$ admits a metrically proper affine isometric action on a Banach space with this same property. On the other hand, we also wish to construct such actions without necessarily beginning from a coarse embedding, but rather from some other data on the group.

Our study naturally splits into four different cases depending on the required geometric properties of the Banach space upon which $G$ acts, namely, general, reflexive, super-reflexive and Hilbert spaces.
 

\subsection{The Arens--Eells space}\label{arens-eells}
Suppose $X$ is a non-empty set. The space $\M(X)$ of {\em molecules} over $X$ is the vector space of finitely supported real-valued functions $m\colon X\til \R$ with mean zero, i.e., so that
$$
\sum_{x\in X}m(x)=0.
$$
By induction on the size of its support, we note that every molecule $m\in \M(X)$ can be written as a finite linear combination of {\em atoms},  i.e., the molecules of the form
$$
m_{x,y}=\delta_x-\delta_y,
$$
where $x,y\in X$ and $\delta_x$ is the Dirac measure at $x$.

If, moreover, $(X,d)$ is a metric space,  we can define the {\em Arens--Eells norm} on $\M(X)$, by the formula
$$
{\norm {m}}_{\AE}=\inf\Big( \sum_{i=1}^n |a_i| d(x_i,y_i) \;\Del\; m=\sum_{i=1}^na_im_{x_i,y_i}\Big).
$$
A simple application of the triangle inequality for $d$ shows that, in the computation of the norm, the infimum is attained at some presentation $m=\sum_{i=1}^na_im_{x_i,y_i}$ where $x_i$ and $y_i$ all belong to the support of $m$. This also shows that $\norm{\cdot}_{\AE}$ is strictly positive on non-zero molecules, which verifies that it is indeed a norm. 
Moreover, as is well-known (see, e.g., \cite{weaver}), the norm is equivalently computed by
$$
\norm m_{{\AE}}=\sup\big( \sum_{x\in X}m(x)f(x)\del f\colon X\til \R \text{ is $1$-Lipschitz }\big),
$$
and so, in particular, $\norm{m_{x,y}}_{\AE}=d(x,y)$ for all $x,y\in X$. 
The completion of $\M(X)$ with respect to the norm $\norm\cdot_{\AE}$ will be denoted by ${\AE}(X,d)$, which we call the {\em Arens-Eells} space of $(X,d)$. 
Since the set of molecules that are rational linear combinations of atoms with support in a given dense subset of $X$ is dense in ${\text{\AE}}(X,d)$, one immediately sees that ${\text{\AE}}(X,d)$ is a separable Banach space  provided $(X,d)$ itself is separable. See the book by N. Weaver \cite{weaver} for further information on $\AE(X,d)$.

Now, if $G$ is a group acting by isometries on $(X,d)$, one immediately obtains an isometric linear action  $\pi\colon G\curvearrowright \big(\M(X), \norm{\cdot}_{\AE}\big)$ via 
$$
\pi(g) m=m(g\inv\;\cdot\;),
$$
or, equivalently,
$$
\pi(g)\big( \sum_{i=1}^na_im_{x_i,y_i}\big)=\sum_{i=1}^na_im_{gx_i,gy_i}
$$ 
for any molecule $m=\sum_{i=1}^na_im_{x_i,y_i}\in \M(X)$ and $g\in G$. Again, this action, being isometric, extends automatically and uniquely to the completion $\AE(X,d)$ of $\M(X)$.

The Arens--Eells space can be seen as a linearisation of the metric space $(X,d)$ by the following procedure. Fix any base point $e\in X$ and let $\phi_e\colon X\til \M(X)$ be the injection defined by 
$$
\phi_e(x)=m_{x,e}.
$$
Since 
$$
\norm{\phi_e(x)-\phi_e(y)}_{\AE}=\norm{m_{x,e}-m_{y,e}}_{\AE}=\norm{m_{x,y}}_{\AE}=d(x,y)
$$ 
for all $x,y\in X$, the map $\phi_e$ is an isometric embedding of $(X,d)$ into the Banach space $\AE(X,d)$. 

Similarly, associated to the isometric linear representation $\pi\colon G\curvearrowright \AE(X,d)$, we can construct a cocycle $b_e\colon G\til \AE(X,d)$ via the formula
$$
b_e(g)=m_{ge,e}.
$$
To verify that $b_e\in Z^1(G,\pi)$, i.e., that $b_e$ satisfies the cocycle equation, note that for $g,h\in G$
$$
b_e(gh)=m_{ghe,e}=m_{ge,e}+m_{ghe,ge}=b_e(g)+\pi(g)\big(b_e(h)\big).
$$
We let $\alpha_e\colon G\curvearrowright \AE(X)$ denote the corresponding affine isometric representation with linear part $\pi$ and cocycle $b_e$. We remark that in this case the following diagram commutes for all $g\in G$.
$$
\begin{CD}
X @>g>> X\\
@V\phi_eVV   @VV\phi_eV\\
\M(X)@>>\alpha_e(g)> \M(X)
\end{CD}
$$
Namely, for any $x\in X$,
\[\begin{split}
\big(\alpha_e(g)\circ \phi_e\big)(x)
&=\alpha_e(g)(m_{x,e})\\
&=\pi(g)(m_{x,e})+b_e(g)\\
&=m_{gx,ge}+m_{ge,e}\\
&=m_{gx,e}\\
&=\big(\phi_e\circ g\big)(x).
\end{split}\]
Therefore, for any choice of base point $e\in X$, the map $\phi_e\colon (X,d)\til \AE(X,d)$ is an equivariant isometry between the isometric action $G\curvearrowright (X,d)$ and the affine isometric action $\alpha_e\colon G\curvearrowright \AE(X,d)$.

\begin{thm}\label{thm arens-eells}
Suppose $d$ is a compatible left-invariant metric on a topological group $G$. Then the affine isometric action $\alpha\colon G\curvearrowright \AE(G,d)$ with linear part $\pi\colon G\curvearrowright \AE(G,d)$ and cocycle $b\in Z^1(G,\pi)$ given by $b(g)=\delta_g-\delta_1$ is continuous and satisfies
$$
\norm{b(g)}_{\AE}=d(g,1)
$$
for all $g\in G$. 
In particular, if $d$ is metrically proper, then so is the action $\alpha$.
\end{thm}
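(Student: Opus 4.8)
The plan is to dispatch the norm identity and the metric-properness conclusion quickly, since both follow formally, and to concentrate the real work on continuity of the action. The norm identity is immediate: $b(g)=\delta_g-\delta_1=m_{g,1}$ is an atom, so the defining property $\norm{m_{x,y}}_{\AE}=d(x,y)$ of the Arens--Eells norm gives $\norm{b(g)}_{\AE}=d(g,1)$ directly. Granting continuity, the final clause also follows at once: if $d$ is metrically proper and $g_n\til\infty$, then $\norm{b(g_n)}_{\AE}=d(g_n,1)\til\infty$, so $b$ is metrically proper, and by Observation \ref{cocycle} this is equivalent to $\alpha$ being a metrically proper action. (One may alternatively argue directly that $\norm{\alpha(g_n)\xi-\xi}_{\AE}\geqslant\norm{b(g_n)}_{\AE}-2\norm{\xi}_{\AE}\til\infty$ for each fixed $\xi$.)

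For continuity, I would first reduce joint continuity of $(g,\xi)\mapsto\alpha(g)\xi$ to continuity of the orbit maps $g\mapsto\alpha(g)\xi$ for fixed $\xi$: since every $\alpha(g)$ is an isometry, $\norm{\alpha(g_i)\xi_i-\alpha(g)\xi}_{\AE}\leqslant\norm{\xi_i-\xi}_{\AE}+\norm{\alpha(g_i)\xi-\alpha(g)\xi}_{\AE}$, so joint continuity follows from orbit continuity by a routine estimate. Writing $\alpha(g)\xi=\pi(g)\xi+b(g)$, it then suffices to treat $b$ and $\pi$ separately. Continuity of $b$ is once more just the norm identity, since $\norm{b(g_i)-b(g)}_{\AE}=\norm{m_{g_i,g}}_{\AE}=d(g_i,g)\til 0$ whenever $g_i\til g$. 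For the linear part it is enough to prove continuity of $g\mapsto\pi(g)\xi$ on atoms $\xi=m_{x,y}$, since these span a dense subspace and the operators $\pi(g)$ are isometries, so continuity propagates to finite molecules by linearity and to all of $\AE(G,d)$ by a standard density and equicontinuity argument.

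The main obstacle is this atomic case. From $\pi(g)m_{x,y}=m_{gx,gy}$ one estimates
$$
\norm{\pi(g_i)m_{x,y}-\pi(g)m_{x,y}}_{\AE}\leqslant d(g_ix,gx)+d(g_iy,gy),
$$
and the point is that left-invariance of $d$ cannot be applied directly here, because the variable factor $g_i$ is multiplied on the right by the fixed element $x$. The device I would use is to rewrite, via left-invariance, $d(g_ix,gx)=d(x\inv g\inv g_ix,1)$; then $g_i\til g$ forces $x\inv g\inv g_ix\til 1$ by joint continuity of multiplication and inversion, whence $d(x\inv g\inv g_ix,1)\til 0$ because $d$ induces the topology of $G$. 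This conjugation manoeuvre is the single place where the topological-group structure is used beyond mere left-invariance of the metric, and getting it right is the crux of the argument; the remaining steps are bookkeeping.
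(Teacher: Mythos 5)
Your proposal is correct and follows essentially the same route as the paper's proof: the norm identity comes straight from $\norm{m_{x,y}}_{\AE}=d(x,y)$, continuity is split into the cocycle and the linear part, and strong continuity of $\pi$ is checked on atoms via the estimate $\norm{\pi(h)m_{x,y}-\pi(k)m_{x,y}}_{\AE}\leqslant d(hx,kx)+d(hy,ky)$ together with density and equicontinuity. The only difference is expository: where the paper leaves implicit that $d(hx,kx)\til 0$ because $d$ is compatible and right translation is continuous, you make this explicit via the conjugation rewriting, which is a correct (if slightly roundabout) way of saying the same thing.
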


\begin{proof}
The only point not addressed by the preceding discussion is the continuity of the action, which separates into continuity of $b\colon G\til \AE(G,d)$ and strong continuity of $\pi$. For $b$, note that
$$
\norm{b(g)-b(f)}_{\AE}=\norm{\delta_g-\delta_f}_{\AE}=d(g,f).
$$
For strong continuity of $\pi$, note that, since the linear span $\M(G)$ of the atoms $\delta_g-\delta_f$ is dense in $\AE(G,d)$, it suffices to verify that the map 
$$
h\in G\mapsto \pi(h)\big(\delta_g-\delta_f\big)\in \AE(G,d)
$$ 
is continuous for all $ g,f\in G$. But this follows again from
\[\begin{split}
\norm{\pi(h)\big(\delta_g-\delta_f\big)-\pi(k)\big(\delta_g-\delta_f\big)}_{\AE}
&\leqslant \norm{\delta_{hg}-\delta_{kg}}_{\AE}+\norm{\delta_{hf}-\delta_{kf}}_{\AE}\\
&=d(hg,kg)+d(hf,kf),
\end{split}\]
thus finishing the proof.
\end{proof}

\subsection{Kernels conditionally of negative type and Hilbert spaces}
As the Arens--Eells space in general has very bad geometric properties even starting from a fairly well-behaved metric spaces, we are interested in other constructions that preserve more of the initial metric properties of $(X,d)$.
The most regular case is of course Hilbert spaces, for which we need some background material on kernels conditionally of negative type. The well-known construction presented here originates in work of E. H. Moore \cite{moore}.
\begin{defi}
A {\em kernel conditionally of negative type} on a set $X$ is a  function $\Psi\colon X\times X\til \R$  so that
\begin{enumerate}
\item $\Psi(x,x)=0$ and $\Psi(x,y)=\Psi(y,x)$ for all $x,y\in X$,
\item for all $x_1,\ldots, x_n\in X$ and $r_1,\ldots, r_n\in \R$ with $\sum_{i=1}^nr_i=0$, we have
$$
\sum_{i=1}^n\sum_{j=1}^n r_ir_j\Psi(x_i,x_j)\leqslant 0.
$$
\end{enumerate}
\end{defi} 
For example, if $\sigma\colon X\til \ku H$ is any mapping from $X$ into a Hilbert space $\ku H$, then a simple calculation shows that
$$
\sum_{i=1}^n\sum_{j=1}^n r_ir_j\norm{\sigma(x_i)-\sigma(x_j)}^2=-2\Norm{\sum_{i=1}^nr_i\sigma(x_i)}\leqslant 0,
$$
whenever $\sum_{i=1}^nr_i=0$, which implies that $\Psi(x,y)=\norm{\sigma(x)-\sigma(y)}^2$ is a kernel conditionally of negative type.

Now, if $\Psi$ is a kernel conditionally of negative type on  a set $X$ and $\M(X)$, as before,  denotes the vector space of finitely supported real valued functions $\xi$ on $X$ of mean $0$, i.e., $\sum_{x\in X}\xi(x)=0$, we can define a positive symmetric linear form  $\langle\cdot\del \cdot\rangle_\Psi$ on $\M(X)$ by
$$
\Big\langle\sum_{i=1}^nr_i\delta_{x_i}\Del \sum_{j=1}^ks_j\delta_{y_i}\Big\rangle_\Psi=-\frac 12\sum_{i=1}^n\sum_{j=1}^k r_is_j\Psi(x_i,y_j).
$$
Also, if $N_\Psi$ denotes the null-space
$$
N_\Psi=\{\xi\in \M(X)\del \langle\xi\del \xi\rangle_\Psi=0\},
$$
then $\langle\cdot\del \cdot\rangle_\Psi$ defines an inner product on the quotient $\M(X)/N_\Psi$ and 
we obtain a real Hilbert space $\ku K$ as the completion of $\M(X)/N_\Psi$ with respect to $\langle\cdot\del \cdot\rangle_\Psi$.

We remark that, if $\Psi$ is defined by a map $\sigma\colon X\til \ku H$ as above and $e\in X$ is any choice of base point, the map $\phi_e\colon X\til \ku K$ defined by $\phi_e(x)=\delta_x-\delta_e$ satisfies $\norm{\phi_e(x)-\phi_e(y)}_\ku K=\norm{\sigma(x)-\sigma(y)}_\ku H$. Indeed,
\[\begin{split}
\norm{\phi_e(x)-\phi_e(y)}^2_\ku K
&=\langle \phi_e(x)-\phi_e(y)\del \phi_e(x)-\phi_e(y)\rangle\\
&=\langle \delta_x-\delta_y\del \delta_x-\delta_y\rangle\\
&=-\frac 12\big(\Psi(x,x)+\Psi(y,y)-\Psi(x,y)-\Psi(y,x)\big)\\
&=\Psi(x,y)\\
&=\norm{\sigma(x)-\sigma(y)}_\ku H^2.
\end{split}\]

Also, if $G\curvearrowright X$ is an action of a group $G$ on $X$ and $\Psi$ is $G$-invariant, i.e., $\Psi(gx,gy)=\Psi(x,y)$, this action lifts to an action  $\pi\colon G\curvearrowright\M(X)$ preserving the form $\langle\cdot\del\cdot\rangle_\Psi$  via $\pi(g)\xi=\xi(g\inv\,\cdot\,)$. It follows that $\pi$ factors through to an orthogonal (i.e., isometric linear) representation $G\curvearrowright\ku K$.

Theorem \ref{maurey} below is originally due to I. Aharoni, B. Maurey and B. S. Mityagin \cite{maurey} for the case of abelian groups and is also worked out by Y. de Cornulier, R. Tessera and A. Valette  for compactly generated  locally compact groups in \cite{tessera}. Since more care is needed when dealing with general amenable groups as opposed to locally compact, we include a full proof.

\begin{thm}\label{maurey}
Suppose $d$ is a compatible left-invariant metric on an amenable  topological group $G$ and $\sigma\colon (G,d)\til \ku H$ is a uniformly continuous and bornologous map into a Hilbert space $\ku H$ with compression and expansion moduli $\kappa_1$ and $\kappa_2$. 
Then there is a continuous affine isometric action $\alpha\colon G\curvearrowright \ku K$  on a real Hilbert space  with associated cocycle $b\colon G\til \ku K$ so that 
$$
\kappa_1\big(d(g,1)\big)\leqslant \norm{b(g)}\leqslant \kappa_2\big(d(g,1)\big),
$$
for all $g\in G$.
\end{thm}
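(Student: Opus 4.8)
The plan is to build a left-$G$-invariant kernel conditionally of negative type on $G$ out of $\sigma$ and then feed it into Moore's construction recalled just before the statement. For each fixed $a\in G$ the map $g\mapsto \sigma(ag)$ takes values in $\ku H$, so $(g,h)\mapsto \norm{\sigma(ag)-\sigma(ah)}^2$ is a kernel conditionally of negative type; the point is to average these over $a$, using amenability, so as to eliminate the dependence on the base point and recover invariance.

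Concretely, I would fix, using amenability of $G$, a right-invariant mean $m$ on the space of bounded right-uniformly continuous functions on $G$ (those on which right translation $R_c\colon f(\,\cdot\,)\mapsto f(\,\cdot\, c)$ acts norm-continuously). For $g,h\in G$ put $\phi_{g,h}(a)=\norm{\sigma(ag)-\sigma(ah)}^2$. The first task is to check $\phi_{g,h}$ lies in the domain of $m$. Boundedness is immediate from bornologousness and left-invariance of $d$, since $\norm{\sigma(ag)-\sigma(ah)}\leqslant \kappa_2\big(d(ag,ah)\big)=\kappa_2\big(d(g,h)\big)<\infty$. Right-uniform continuity follows because $d(acg,ag)=d(cg,g)\til 0$ as $c\til 1$, uniformly in $a$, so uniform continuity of $\sigma$ forces $\norm{\sigma(acg)-\sigma(ag)}\til 0$ uniformly in $a$ (and symmetrically for $h$), whence $\norm{R_c\phi_{g,h}-\phi_{g,h}}_\infty\til 0$. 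I then set
$$
\Psi(g,h)=m_a\big(\phi_{g,h}(a)\big)
$$
and verify it is a kernel conditionally of negative type: for each fixed $a$ the relevant quadratic-form inequality holds, a positive normalised functional preserves it, and $\Psi(g,g)=m(0)=0$. Most importantly, right-invariance of $m$ gives $\Psi(cg,ch)=m_a\big(\phi_{g,h}(ac)\big)=\Psi(g,h)$, i.e.\ invariance under the diagonal left action, which is exactly what is needed to lift to an orthogonal representation $\pi(g)\xi=\xi(g\inv\,\cdot\,)$.

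Moore's construction then produces a real Hilbert space $\ku K$, the representation $\pi$, and the cocycle $b(g)=\delta_g-\delta_1$ with $\norm{b(g)}^2=\Psi(g,1)$. The estimates are now automatic: taking $h=1$ and using $d(ag,a)=d(g,1)$ gives $\kappa_1\big(d(g,1)\big)^2\leqslant \phi_{g,1}(a)\leqslant \kappa_2\big(d(g,1)\big)^2$ for every $a$, so applying the monotone normalised mean and taking square roots yields $\kappa_1\big(d(g,1)\big)\leqslant \norm{b(g)}\leqslant \kappa_2\big(d(g,1)\big)$. For continuity of the action I would note that $\norm{b(g)-b(h)}^2=\Psi(g,h)\leqslant \kappa_2\big(d(g,h)\big)^2$, so uniform continuity of $\sigma$ (that is, $\kappa_2(t)\til 0$ as $t\til 0_+$) makes $b$ uniformly continuous; and strong continuity of $\pi$ reduces, as in the proof of Theorem \ref{thm arens-eells}, to the atoms $\delta_h-\delta_{h'}$, where it follows from $d(gh,g_0h)\til 0$ as $g\til g_0$ together with the same estimate.

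The main obstacle is precisely the point the text flags as needing ``more care'' than the locally compact case: manufacturing the invariant mean on the correct function space and verifying that the averaged functions belong to it. In the locally compact setting one may average against Haar measure over F\o lner sets, but in general one must isolate the right class of uniformly continuous functions and exploit left-invariance of $d$ to get the uniform-in-$a$ bounds placing $\phi_{g,h}$ there, while simultaneously arranging the mean to be invariant on the correct (right) side so that $\Psi$ becomes genuinely left-$G$-invariant rather than merely a kernel. Once the kernel is in hand, everything else is formal.
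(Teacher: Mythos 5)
Your proposal is correct and follows essentially the same route as the paper: the same averaged kernel $\Psi(g,h)=m_a\big(\norm{\sigma(ag)-\sigma(ah)}^2\big)$ built from a mean on the bounded functions on which right translation acts norm-continuously (which the paper calls ${\rm LUC}(G)$), invariance of $\Psi$ from invariance of the mean under the right-regular representation, and then Moore's construction with cocycle $b(g)=\delta_g-\delta_1$ and the moduli estimates from positivity of the mean. The only differences are cosmetic (naming of the function space, and your slightly more explicit treatment of continuity of the resulting action).
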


\begin{proof} 
For fixed $g,h\in G$, we define a function $\phi_{g,h}\colon G\til \R$ via
$$
\phi_{g,h}(f)=\norm{\sigma(fg)-\sigma(fh)}^2.
$$
Note first that $\phi_{g,h}\in \ell^\infty(G)$. For $d(fg,fh)=d(g,h)$ for all $f\in G$ and so, since $\sigma$ is bornologous, we have 
$$
\norm{\phi_{g,h}}_\infty=\sup_{f\in G}|\phi_{g,h}(f)|=\sup_{f\in G}\norm{\sigma(fg)-\sigma(fh)}^2\leqslant  \kappa_2\big(d(g,h)\big)^2<\infty.
$$

Secondly, we claim that $\phi_{g,h}$ is {\em left}-uniformly continuous, i.e., that  for all $\eps>0$ there is $W\ni 1$ open so that $|\phi_{g,h}(f)-\phi_{g,h}(fw)|<\eps$, whenever $f\in G$ and $w\in W$. To see this, take some $\eta>0$ so that $4\eta\norm{\phi_{g,h}}_\infty+4\eta^2<\eps$ and find, by uniform continuity of $\sigma$, some open $V\ni1$ so that   $\norm{\sigma(f)-\sigma(fv)}<\eta$ for all $f\in G$ and $v\in V$. 
Pick also $W\ni 1$ open so that $Wg\subseteq gV$ and $Wh\subseteq hV$. Then, if $f\in G$ and $w\in W$, there are $v_1,v_2\in V$ so that $wg=gv_1$ and $wh=hv_2$, whence
\[\begin{split}
\big|\phi_{g,h}(f)-\phi_{g,h}(fw)\big|
&=\Big|  \norm{\sigma(fg)-\sigma(fh)}^2- \norm{\sigma(fwg)-\sigma(fwh)}^2\Big|  \\
&=\Big|  \norm{\sigma(fg)-\sigma(fh)}^2- \norm{\sigma(fgv_1)-\sigma(fhv_2)}^2\Big|\\ 
&<4\eta\norm{\phi_{g,h}}_\infty+4\eta^2\\
&<\eps.
\end{split}\]
Thus, every $\phi_{g,h}$ belongs to the closed linear subspace ${\rm LUC}(G)\subseteq \ell^\infty(G)$ of left-uniformly continuous bounded real-valued functions on $G$ and a similar calculation shows that the map $(g,h)\in G\times G\mapsto \phi_{g,h}\in \ell^\infty(G)$ is continuous. 

Now, since $G$ is amenable, there exists a mean $m$ on ${\rm LUC}(G)$ invariant under the {\em right}-regular representation $\rho\colon G\curvearrowright {\rm LUC}(G)$ given by $\rho(g)\big(\phi\big)=\phi(\,\cdot\, g)$. Using this, we can define a continuous kernel $\Psi\colon G\times G\til \R$ by
$$
\Psi(g,h)=m(\phi_{g,h})
$$
and  note that $\Psi(fg,fh)=m(\phi_{fg,fh})=m\big(\rho(f)\big(\phi_{g,h}\big)\big)=m(\phi_{g,h})=\Psi(g,h)$ for all $g,h,f\in G$. 

We claim that $\Psi$ is a kernel conditionally of negative type. To verify this, let $g_1,\ldots, g_n\in G$ and $r_1,\ldots, r_n\in \R$ with $\sum_{i=1}^nr_i=0$. Then, for all $f\in G$, 
$$
\sum_{i=1}^n\sum_{j=1}^n r_ir_j\phi_{g_i,g_j}(f)=\sum_{i=1}^n\sum_{j=1}^n r_ir_j\norm{\sigma(fg_i)-\sigma(fg_j)}^2\leqslant 0,
$$
since $(g,h)\mapsto \norm{\sigma(fg)-\sigma(fh)}^2$ is a kernel conditionally of negative type. Since $m$ is positive, it follows that also
$$
\sum_{i=1}^n\sum_{j=1}^n r_ir_j\Psi(g_i,g_j)
= m\Big(\sum_{i=1}^n\sum_{j=1}^n r_ir_j\phi_{g_i,g_j}\Big)
\leqslant 0.
$$

As above, we define a positive symmetric form $\langle\cdot\del \cdot\rangle_\Psi$ on $\M(G)$.
Note that, since $\Psi$ is left-invariant, the form $\langle\cdot\del \cdot\rangle_\Psi$ is invariant under the left-regular representation $\lambda\colon G\curvearrowright \M(G)$ given by $\lambda(g)(\xi)=\xi(g\inv\,\cdot\,)$ and so $\lambda$ induces a strongly continuous  orthogonal representation $\pi$ of $G$ on the Hilbert space completion $\ku K$ of $\M(G)/N_\Psi$.

Moreover, as is easily checked, the map $b\colon G\til \ku K$ given by $b(g)=(\delta_g-\delta_1)+N_\Psi$ is a cocyle for $\pi$. Now
$$
\norm{b(g)}^2=\langle \delta_g-\delta_1 \del \delta_g-\delta_1\rangle_\Psi=\Psi(g,1)=m(\phi_{g,1}),
$$
so, since 
$$
\kappa_1\big(d(g,1)\big)^2\leqslant \phi_{g,1}\leqslant \kappa_2\big(d(g,1)\big)^2,
$$
it follows from the positivity of $m$ that
$$
\kappa_1\big(d(g,1)\big)\leqslant \norm{b(g)}\leqslant \kappa_2\big(d(g,1)\big),
$$
which proves the theorem.
\end{proof}

We can now extend the definition of the Haagerup property from locally compact groups to the full category of metrisable groups.
\begin{defi}
A metrisable group $G$ is said to have the {\em Haagerup property} if it admits a metrically proper continuous affine isometric action on a Hilbert space.
\end{defi}

Thus, based on Theorem \ref{maurey}, we have the following reformulation of the Haagerup property for amenable metrisable groups.

\begin{thm}\label{haagerup equiv}
The following are equivalent for an amenable separable metrisable  group with the local property (OB),
\begin{enumerate}
\item $G$ admits a uniformly continuous coarse embedding into a Hilbert space $\eta \colon G\til \ku H$,
\item $G$ has the Haagerup property.
\end{enumerate}
\end{thm}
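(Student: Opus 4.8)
The plan is to treat the two implications separately, in both cases fixing at the outset a metrically proper compatible left-invariant metric $d$ on $G$, which exists by Theorem \ref{existence of metrically proper} since $G$ is separable metrisable with the local property (OB). With such a $d$ in hand, the equivalence becomes a matter of matching the cocycle of an affine isometric action against a coarse embedding, the genuine analytic content having already been isolated in Theorem \ref{maurey}.

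For (2)$\saa$(1), I would start from a metrically proper continuous affine isometric action $\alpha\colon G\curvearrowright \ku H$ on a Hilbert space, furnished by the Haagerup property, and pass to its associated cocycle $b\colon G\til \ku H$. By the discussion preceding Observation \ref{cocycle}, continuity of $\alpha$ forces $b\colon (G,d)\til \ku H$ to be uniformly continuous, and by Observation \ref{cocycle}, since $d$ is metrically proper and $\alpha$ is metrically proper, $b$ is a coarse embedding. Thus $\eta=b$ is the required uniformly continuous coarse embedding.

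For (1)$\saa$(2), I would feed the given uniformly continuous coarse embedding $\eta\colon (G,d)\til \ku H$ directly into Theorem \ref{maurey}, which applies because $G$ is amenable and $\eta$, being a coarse embedding, is bornologous (and uniformly continuous by hypothesis). This produces a continuous affine isometric action $\alpha\colon G\curvearrowright \ku K$ with cocycle $b$ satisfying
\[
\kappa_1\big(d(g,1)\big)\leqslant \norm{b(g)}\leqslant \kappa_2\big(d(g,1)\big)
\]
for all $g$, where $\kappa_1$ is the compression modulus of $\eta$. It then remains to verify that $\alpha$ is metrically proper. Suppose $g_n\til \infty$; since $d$ is metrically proper this means $d(g_n,1)\til \infty$, and since $\eta$ is expanding we have $\lim_{t\til\infty}\kappa_1(t)=\infty$, so the lower bound gives $\norm{b(g_n)}\geqslant \kappa_1\big(d(g_n,1)\big)\til \infty$. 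Hence $b$, and therefore $\alpha$, is metrically proper, which is exactly the Haagerup property.

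The main obstacle is not in the present argument, which is essentially a bookkeeping assembly, but is entirely contained in Theorem \ref{maurey}: the passage from a mere coarse embedding into Hilbert space to an actual affine isometric action crucially exploits amenability via an invariant mean on ${\rm LUC}(G)$ to average the negative-type kernels. The only subtlety remaining here is to confirm that a coarse embedding supplies precisely the hypotheses of that theorem (uniform continuity and bornologousness) and that its expanding property translates, through the metric properness of $d$, into metric properness of the resulting action.
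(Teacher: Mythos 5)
Your proof is correct and follows essentially the same route as the paper: both directions reduce to Observation \ref{cocycle} and Theorem \ref{maurey} respectively, with the metrically proper metric $d$ (guaranteed by the local property (OB)) serving as the bridge between coarse embeddings and metrically proper cocycles. Your additional remark that the expanding property of $\eta$ forces $\kappa_1(t)\til\infty$ and hence $\norm{b(g_n)}\til\infty$ simply spells out the final sentence of the paper's argument.
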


\begin{proof}(2)$\saa$(1): If $\alpha\colon G\curvearrowright \ku H$ is a metrically proper continuous affine isometric action, with corresponding cocycle $b\colon G\til \ku H$, then, by Observation \ref{cocycle}, $b\colon G\til \ku H$ is a uniformly continuous coarse embedding.

(1)$\saa$(2): Fix a metrically proper compatible left-invariant metric $d$ on $G$. By Theorem \ref{maurey}, there is a continuous affine isometric action $\alpha\colon G\curvearrowright \ku K$ on a Hilbert space $\ku K$ with associated cocycle $b\colon G\til \ku K$ so that, for all $g\in G$, 
$$
\kappa_1\big(d(g,1)\big)\leqslant \norm{b(g)}\leqslant\kappa_2\big(d(g,1)\big),
$$
where $\kappa_1(t)=\inf_{d(g,f)\geqslant t}\norm{\eta(g)-\eta(f)}$ and $\kappa_2(t)=\sup_{d(g,f)\leqslant t}\norm{\eta(g)-\eta(f)}$. Since $\eta$ is a coarse embedding, the cocycle $b\colon G\til \ku K$ is metrically proper and so is the action $\alpha$. 
\end{proof}

In the setting of amenable non-Archimedean Polish groups, the condition of uniform continuity of $\eta$ may be dropped. Indeed, fix an open subgroup $V\leqslant G$ with property (OB) relative to $G$ and note that, since $\eta$ is bornologous,  there is a constant $K>0$ so that $\norm{\eta(g)-\eta(f)}\leqslant K$ whenever $f\inv g\in V$.
Letting $X\subseteq G$ denote a set of left-coset representatives for $V$, we define $\sigma(g)=\eta(h)$, where $h\in X$ is the coset representative of $gV$. Then $\norm{\eta(g)-\sigma(g)}\leqslant K$ for all $g\in G$, so $\sigma\colon G\til \ku H$ is bornologous and clearly constant on left-cosets of $V$, whence also uniformly continuous with respect to some chosen metrically proper compatible left-invariant metric $d$ on $G$. So $\sigma$ is a uniformly continuous coarse embedding.

U. Haagerup \cite{haagerup} initially showed that finitely generated free groups have the Haagerup property. It is also known that amenable locally compact groups  \cite{BCV} (see also  \cite{CCJJV}) have the Haagerup property. However, this is not the case for amenable metrisable groups, as, e.g., the isometry group of the Urysohn metric space $\U$ provides a counter-example. We shall verify this in Section \ref{superrefl}.

There is also a converse to this. Namely, E. Guentner and J. Kaminker \cite{GK} showed that, if a finitely generated discrete group $G$ admits a affine isometric action on a Hilbert space whose cocycle $b$ growths faster than the square root of the word length, then $G$ is amenable (see \cite{tessera} for the generalisation to the locally compact case).


\subsection{Approximate compactness, super-reflexivity and Rademacher type}\label{superrefl}
Weakening the geometric restrictions on the phase space from euclidean to uniformly convex, we still have a result similar to Theorem \ref{maurey}. However, in this case, we must assume that the group in question is  approximately compact and not only amenable.

\begin{defi}
A topological group $G$ is said to be {\em approximately compact} if there is a countable chain $K_0\leqslant K_1\leqslant \ldots \leqslant G$ of compact subgroups whose union $\bigcup_nK_n$ is dense in $G$.
\end{defi}

This turns out to be a fairly common phenomenon among non-locally compact metrisable groups. For example, the unitary group $U(\ku H)$ of separable infinite-dimensional Hilbert space with the strong operator topology  is approximately compact. Indeed, if $\ku H_1\subseteq \ku H_2\subseteq \ldots\subseteq \ku H$ is an increasing exhaustive sequence of finite-dimensional subspaces and $U(n)$ denotes the group of unitaries pointwise fixing the orthogonal complement $\ku H_n^\perp$, then each $U(n)$ is compact and the union $\bigcup_nU(n)$ is dense in $U(\ku H)$.

More generally, as shown by P. de la Harpe \cite{harpe}, if $M$ is an approximately finite-dimensional von Neumann algebra, i.e., there is an increasing sequence $A_1\subseteq A_2\subseteq\ldots\subseteq M$ of finite-dimensional matrix algebras whose union is dense in $M$ with respect the strong operator topology, then the unitary subgroup $U(M)$ is approximately compact with respect to the strong operator topology. 

Similarly, if $G$ contains a locally finite dense subgroup, this will witness approximate compactness. Again this applies to, e.g., ${\rm Aut}([0,1],\lambda)$ with the weak topology, where the dyadic permutations are dense,  and ${\rm Isom}(\U)$ with the pointwise convergence topology (this even holds for the dense subgroup ${\rm Isom}(\Q\U)$ by a result of S. Solecki; see \cite{RZ} for a proof).

Of particular interest to us is the case of non-Archimedean Polish groups. By general techniques, these may be represented as automorphism groups of countable locally finite  (i.e., any finitely generated substructure is finite) ultrahomogeneous structures. And, in this setting, we have the following reformulation of approximate compactness.
\begin{prop}[A.S. Kechris \& C. Rosendal \cite{turbulence}]
Let $\bf M$ be a locally finite, countable, ultrahomogeneous structure.
Then ${\rm Aut}(\bf M)$ is approximately compact if and only if, for every finite substructure $\bf A\subseteq \bf M$ and all {\em partial} automorphisms $\phi_1,\ldots,
\phi_n$ of $\bf
A$, there is a larger finite substructure $\bf B$ with
$$
\bf A\subseteq \bf B\subseteq \bf M
$$
and {\em full} automorphisms $\psi_1,\ldots,\psi_n$ of $\bf B$ extending $\phi_1,\ldots,\phi_n$ respectively.
\end{prop}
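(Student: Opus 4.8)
The plan is to work throughout with the permutation-group topology on $\mathrm{Aut}(\mathbf M)$, for which a neighbourhood basis at the identity consists of the pointwise stabilisers $V_{\mathbf A}=\{g\in\mathrm{Aut}(\mathbf M)\del g\restriction\mathbf A=\mathrm{id}\}$ of finite substructures $\mathbf A\subseteq\mathbf M$; these are clopen subgroups, and agreement of two automorphisms on a finite set is a basic clopen condition. Two standing facts will carry the argument. First, a closed subgroup $K\leqslant\mathrm{Aut}(\mathbf M)$ is compact if and only if all its orbits on $M$ are finite: the forward direction holds because $K\cdot x$ is the image of the compact $K$ under the continuous evaluation $g\mapsto gx$ into the discrete set $M$, and the converse is the standard fact that such a $K$ is a closed subgroup of the compact product $\prod_O\mathrm{Sym}(O)$ over its finite orbits $O$. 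Second, by ultrahomogeneity any partial automorphism of a finite substructure, being an isomorphism between two finite substructures of $\mathbf M$, extends to a full automorphism of $\mathbf M$.

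\emph{From approximate compactness to the extension property.} Suppose $K_0\leqslant K_1\leqslant\cdots$ are compact with $\bigcup_nK_n$ dense. Given $\mathbf A$ and partial automorphisms $\phi_1,\dots,\phi_n$, first extend each $\phi_i$ to some $g_i\in\mathrm{Aut}(\mathbf M)$ by ultrahomogeneity. Since $\bigcup_nK_n$ is dense and agreement on the finite set $\mathbf A$ is an open condition, I can pick $h_i$ lying in a common $K_m$ (take $m$ the maximum of the finitely many indices arising) with $h_i\restriction\mathbf A=g_i\restriction\mathbf A$, so in particular $h_i$ extends $\phi_i$. As $K_m$ is compact its orbits are finite, whence $K_m\cdot\mathbf A$ is finite and, by local finiteness, the substructure $\mathbf B=\langle K_m\cdot\mathbf A\rangle$ it generates is finite; moreover $\mathbf B$ is $K_m$-invariant because $K_m$ permutes $K_m\cdot\mathbf A$ and acts by automorphisms. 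Hence each $h_i$ restricts to a full automorphism $\psi_i=h_i\restriction\mathbf B$ of $\mathbf B$ extending $\phi_i$, as required.

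\emph{From the extension property to approximate compactness.} Fix an exhaustion $\mathbf A_0\subseteq\mathbf A_1\subseteq\cdots$ of $\mathbf M$ by finite substructures. I build a finer exhaustion $\mathbf B_0\subseteq\mathbf B_1\subseteq\cdots$ with the property that \emph{every} partial automorphism of $\mathbf B_k$ extends to a full automorphism of $\mathbf B_{k+1}$: to pass from $\mathbf B_k$ to $\mathbf B_{k+1}$, apply the hypothesis to the finite substructure $\langle\mathbf B_k\cup\mathbf A_{k+1}\rangle$ together with the finitely many partial automorphisms of $\mathbf B_k$, obtaining a single finite $\mathbf B_{k+1}\supseteq\langle\mathbf B_k\cup\mathbf A_{k+1}\rangle$ carrying full automorphisms extending each of them, while the inclusion of $\mathbf A_{k+1}$ keeps $\bigcup_k\mathbf B_k=\mathbf M$. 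Now set
$$
K_k=\{g\in\mathrm{Aut}(\mathbf M)\del g(\mathbf B_j)=\mathbf B_j\ \text{for all}\ j\geqslant k\}.
$$
Each $K_k$ is an intersection of clopen setwise stabilisers, hence closed, and every $K_k$-orbit is finite (any $x$ lies in some $\mathbf B_j$ with $j\geqslant k$, and then $K_k\cdot x\subseteq\mathbf B_j$), so $K_k$ is compact; plainly $K_k\leqslant K_{k+1}$. For density, since the $\mathbf B_k$ are cofinal among finite substructures it suffices, given $g\in\mathrm{Aut}(\mathbf M)$ and $k$, to find $h\in\bigcup_mK_m$ with $h\restriction\mathbf B_k=g\restriction\mathbf B_k$. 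Choosing $j$ with $\mathbf B_k\cup g(\mathbf B_k)\subseteq\mathbf B_j$, the map $g\restriction\mathbf B_k$ is a partial automorphism of $\mathbf B_j$, so it extends to a full automorphism $\psi_0\in\mathrm{Aut}(\mathbf B_{j+1})$. Iterating the extension property up the tower produces full automorphisms $\psi_0\subseteq\psi_1\subseteq\cdots$ with $\psi_i\in\mathrm{Aut}(\mathbf B_{j+1+i})$, whose union $h=\bigcup_i\psi_i$ is an automorphism of $\mathbf M$ satisfying $h(\mathbf B_{j'})=\mathbf B_{j'}$ for every $j'\geqslant j+1$; thus $h\in K_{j+1}$ and $h\restriction\mathbf B_k=g\restriction\mathbf B_k$, witnessing density.

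\emph{Main obstacle.} The forward implication is routine once the compactness criterion is in hand. The substance lies in the converse: the hypothesis only produces finite approximations, and the real work is to climb the tower $(\mathbf B_k)$ so as to manufacture a genuine element of $\mathrm{Aut}(\mathbf M)$ that simultaneously agrees with a prescribed $g$ on a given finite piece and preserves every sufficiently large $\mathbf B_j$ setwise. It is precisely this setwise invariance, obtained by threading the extension property through infinitely many consecutive levels, that forces finite orbits and hence membership in a single compact $K_{j+1}$. The delicate bookkeeping point is to arrange the finer exhaustion $(\mathbf B_k)$ so that it serves both to define the compact groups $K_k$ and to run the cofinal chain of extensions used for density.
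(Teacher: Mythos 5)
The paper itself gives no proof of this proposition --- it is quoted from Kechris--Rosendal \cite{turbulence} --- so there is nothing internal to compare against; judged on its own, your argument is correct and is the standard one. Both directions check out: the equivalence of compactness with finiteness of orbits for closed subgroups of $\mathrm{Aut}(\bf M)$ gives, in one direction, the finite $K_m$-invariant substructure $\langle K_m\cdot {\bf A}\rangle$ (using local finiteness), and in the other the compact groups $K_k$ of simultaneous setwise stabilisers of the tower $({\bf B}_j)_{j\geqslant k}$, whose density follows by threading a given restriction $g\!\upharpoonright\! {\bf B}_k$ up the tower via the extension property.
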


We recall that a Banach space $V$ is {\em super-reflexive} if every other space crudely finitely representable in $V$ is reflexive. That is, if $X$ is a Banach space so that, for some fixed $K\geqslant 1$, every finite-dimensional subspace $F\subseteq X$ is $K$-isomorphic to a subspace of $V$, then $X$ is reflexive. In particular, every super-reflexive space is reflexive. Moreover, super-reflexive spaces are exactly those all of whose ultrapowers are reflexive. By a result of P. Enflo \cite{enflo} (see also G. Pisier \cite{pisier} for an improved result or \cite{fabian} for a general treatment), the super-reflexive spaces can also be characterised as those admitting an equivalent uniformly convex renorming.

For the case of super-reflexive spaces, we have the following result, which is due to V. Pestov \cite{pestov} in the case of a locally finite discrete group $G$. 
\begin{thm}\label{pestov}
Suppose $d$ is a compatible left-invariant metric on an approximately compact topological group $G$.  Assume that $\sigma\colon (G,d)\til E$ is a uniformly continuous and bornologous map into a super-reflexive Banach space $E$ with compression and expansion moduli $\kappa_1$ and $\kappa_2$.
Then there is a continuous affine isometric action of $G$ on a super-reflexive Banach space $V$ with corresponding cocycle $b$ so that 
$$
\kappa_1\big(d(f,1)\big)\leqslant \norm{b(f)}_V\leqslant \kappa_2\big(d(f,1)\big)
$$
for all $f\in G$.
\end{thm}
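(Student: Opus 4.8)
The plan is to follow the architecture of the proof of Theorem \ref{maurey}, but to replace the two Hilbert-specific ingredients. In place of the scalar invariant mean supplied by amenability, I would average genuinely over the compact subgroups witnessing approximate compactness; in place of the kernel of negative type and its associated Hilbert space, I would use an ultraproduct of Bochner spaces $L^2(K_n,E)$. So fix a chain $K_0\leqslant K_1\leqslant \cdots$ of compact subgroups with $\Gamma:=\bigcup_nK_n$ dense in $G$, and let $\mu_n$ be the normalised (hence bi-invariant) Haar measure on $K_n$. Since each $K_n$ is compact it is $d$-bounded, so $\sigma$ being bornologous makes $\sigma\begr K_n$ bounded; being also uniformly continuous, it is continuous, whence $\sigma\begr K_n\in V_n:=L^2(K_n,E,\mu_n)$.

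First I would build, for each $n$, an affine isometric action of $K_n$ on $V_n$. Let $\rho_n$ be the right-regular representation $(\rho_n(k)\xi)(f)=\xi(fk)$, which is isometric on $V_n$ by right-invariance of $\mu_n$, and put $b_n(g)(f)=\sigma(fg)-\sigma(f)$ for $f\in K_n$. Note that $b_n(g)$ is defined for \emph{every} $g\in G$, since only $f$ is required to range over $K_n$. A direct computation gives the mixed identity $b_n(\eta g)=\rho_n(\eta)b_n(g)+b_n(\eta)$ for all $\eta\in K_n$ and $g\in G$, so in particular $(\rho_n,b_n)$ is an affine isometric action of $K_n$. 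Left-invariance of $d$ yields $d(fg,f)=d(g,1)$, whence $\kappa_1(d(g,1))\leqslant \norm{\sigma(fg)-\sigma(f)}_E\leqslant \kappa_2(d(g,1))$ pointwise in $f$; integrating the squares against the probability measure $\mu_n$ gives $\kappa_1(d(g,1))\leqslant \norm{b_n(g)}_{V_n}\leqslant \kappa_2(d(g,1))$ for every $g\in G$.

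Next I would pass to an ultraproduct. Fix a non-principal ultrafilter $\mathcal U$ on $\N$, set $V=\prod_{\mathcal U}V_n$, and define $b(g)=[b_n(g)]_{\mathcal U}\in V$ for $g\in G$ and $\pi(\gamma)=[\rho_n(\gamma)]_{\mathcal U}$ for $\gamma\in\Gamma$ (each such $\gamma$ lies in $K_n$ for $\mathcal U$-almost all $n$). Then $\pi$ is an isometric linear representation of $\Gamma$, the norm bounds pass to the limit to give $\norm{b(g)}_V\in[\kappa_1(d(g,1)),\kappa_2(d(g,1))]$ for all $g\in G$, and the mixed identity becomes $b(\eta g)=\pi(\eta)b(g)+b(\eta)$ for $\eta\in\Gamma$, $g\in G$. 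Since $\norm{b_n(g)-b_n(h)}_{V_n}^2=\int_{K_n}\norm{\sigma(fg)-\sigma(fh)}^2\,d\mu_n$ and $d(fg,fh)=d(g,h)$, we also get $\norm{b(g)-b(h)}_V\leqslant \kappa_2(d(g,h))$, so $b\colon(G,d)\til V$ is uniformly continuous. For super-reflexivity I would note that each $V_n=L^2(K_n,E)$ is almost isometrically finitely representable in the $\ell^2$-sum $\ell^2(E)$ (approximate by simple functions), hence so is $V$; as $\ell^2(E)$ is super-reflexive whenever $E$ is (renorm $E$ uniformly convex by Enflo's theorem and take the corresponding $\ell^2$-norm), transitivity of finite representability makes $V$, and every closed subspace of it, super-reflexive.

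The remaining and most delicate step is to produce a genuinely \emph{continuous} action of all of $G$, since an ultraproduct representation is typically not strongly continuous. I would restrict to the closed $\pi(\Gamma)$-invariant subspace $V_0=\ov{\mathrm{span}}\{\pi(\gamma)b(g)\del \gamma\in\Gamma,\ g\in G\}$, which is super-reflexive and contains the range of $b$. Rewriting $\pi(\gamma\gamma')b(g)=b(\gamma\gamma'g)-b(\gamma\gamma')$ via the mixed identity and invoking uniform continuity of $b$ together with joint continuity of multiplication, one checks that $\gamma\mapsto\pi(\gamma)$ is strongly continuous on the dense spanning set of $V_0$; as the $\pi(\gamma)$ are isometries, $\pi\begr\Gamma$ extends uniquely to a strongly continuous isometric representation $\pi\colon G\til \lin{V_0}$. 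Finally $b$ is already uniformly continuous on all of $G$, the cocycle equation $b(gh)=\pi(g)b(h)+b(g)$ holds on the dense set $\Gamma\times G$ and extends to $G\times G$ by continuity, and the norm bounds hold for every $g\in G$. Thus $\alpha(g)v=\pi(g)v+b(g)$ is the desired continuous affine isometric action on the super-reflexive space $V_0$. I expect the strong-continuity and extension argument to be the principal obstacle, with the verification that super-reflexivity survives the ultraproduct a close second.
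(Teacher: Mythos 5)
Your proposal is correct and follows essentially the same route as the paper: the elements $b_n(g)(f)=\sigma(fg)-\sigma(f)$ of $L^2(K_n,E)$ are exactly the paper's $[g]_{K_n}-[1]_{K_n}$, and the paper likewise passes to the ultraproduct over a non-principal ultrafilter, restricts to the closed span of the cocycle's range, and extends the representation from $\bigcup_n K_n$ to $G$ by uniform continuity. The only (harmless) divergence is in establishing super-reflexivity of the ultraproduct: the paper first renorms $E$ uniformly convex and uses that the $L^2(K_n,E)$ share a common modulus of convexity, whereas you argue via finite representability in $\ell^2(E)$; both are standard.
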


\begin{proof}By renorming $E$, we may suppose that $E$ is uniformly convex.
For a compact subgroup $K\leqslant G$, let $\mu$ denote the Haar measure on $K$ and $L^2(K, E)$ denote the Banach space of square integrable $E$-valued functions $\phi$ on $K$ with norm
$$
\norm{\phi}_{L^2}=\Big(\int_K\norm{\phi(g)}_E^2 \;d\mu(g)\Big)^{\frac 12}.
$$
Since $E$ uniformly convex, so is $L^2(K,E)$. Moreover,  for every other compact subgroup $C\leqslant G$, the probability spaces $K$ and $C$ are isomorphic,  whence the Banach spaces $L^2(K,E)$ and $L^2(C,E)$ are isometric. Thus, the modulus of uniform convexity of $L^2(K,E)$ is independent of the choice of $K$.
For every $f\in G$, we define an element $[f]_K\in L^2(K,E)$ by $[f]_K(g)=\sigma(gf)$ for all $g\in K$. Note that since $\sigma$ is continuous, so is $[f]_K$ and hence $[f]_K$ is automatically square integrable on the compact group $K$.

Let also $\rho\colon K\curvearrowright L^2(K,E)$ denote the right-regular representation given by $\big(\rho(g)\phi\big)(h)=\phi(hg)$ and note that, in particular, 
$$
\big(\rho(g)[f]_K\big)(h)=[f]_K(hg)=\sigma(hgf)=[gf]_K(h),
$$
i.e., $\rho(g)[f]_K=[gf]_K$ for all $g\in K$ and $f\in G$.

Now, fix a non-principal ultrafilter $\ku U$ on $\N$ and let $K_0\leqslant K_1\leqslant\ldots\leqslant G$ be a countable chain of compact subgroups whose union is dense in $G$. Consider the ultraproduct
$$
W=\prod_\ku UL^2(K_n,E).
$$
That is, $W$ is the quotient of $\big(\bigoplus_n L^2(K_n,E)\big)_\infty$ by the subspace
$$
N_\ku U=\{(\phi_n)\in \big(\bigoplus_n L^2(K_n,E)\big)_\infty\del \lim_\ku U\norm{\phi_n}_{L^2}=0\}.
$$
For $(\phi_n)\in \big(\bigoplus_n L^2(K_n,E)\big)_\infty$, we denote its image in $W$ by $(\phi_n)_\ku U$. 
Since the spaces $L^2(K_n,E)$ are all uniformly convex with the same modulus of uniform convexity, the ultraproduct remains uniformly convex and hence super-reflexive.

Note that, for all $f,h\in G$ and $n\in \N$, 
\[\begin{split}
\Norm{[f]_{K_n}-[h]_{K_n}}_{L^2}
&=\Big(\int_{K_n}\norm{[f]_{K_n}(g)-[h]_{K_n}(g)}_E^2 \;d\mu(g)\Big)^{\frac 12}\\
&=\Big(\int_{K_n}\norm{\sigma(gf)-\sigma(gh)}_E^2 \;d\mu(g)\Big)^{\frac 12}\\
&\leqslant \Big(\int_{K_n}\kappa_2\big(d(gf,gh)\big)^2 \;d\mu(g)\Big)^{\frac 12}\\
&= \Big(\int_{K_n}\kappa_2\big(d(f,h)\big)^2 \;d\mu(g)\Big)^{\frac 12}\\
&=\kappa_2\big(d(f,h)\big),
\end{split}\]
so the sequence $\big([f]_{K_n}-[h]_{K_n}\big)_n$ is uniformly bounded in the $L^2$-norms and thus belongs to $\big(\bigoplus_n L^2(K_n,E)\big)_\infty$.
By the same reasoning, we also note that  $\Norm{[f]_{K_n}-[h]_{K_n}}_{L^2}\geqslant \kappa_1\big(d(f,h)\big)$. It follows that, for all $f,h\in G$, 
\begin{equation}\label{eq super}
\kappa_1\big(d(f,h)\big)\leqslant \Norm{\big([f]_{K_n}-[h]_{K_n}\big)_\ku U}_{W}\leqslant \kappa_2\big(d(f,h)\big).
\end{equation}

By the above, it follows that we can define a map $b\colon G\til W$ by setting $b(f)=([f]_{K_n}-[1]_{K_n})_\ku U$.  Note that, since $\sigma$ is uniformly continuous, for all $\eps>0$ there is $\delta>0$ so that $\kappa_2(\delta)<\eps$. In particular, 
$$
\Norm{b(f)-b(h)}_W=\Norm{\big([f]_{K_n}-[h]_{K_n}\big)_\ku U}_{W}\leqslant \kappa_2\big(d(f,h)\big)<\eps,
$$
whenever $d(f,h)<\delta$. Thus, $b$ is uniformly continuous.

Also, for $g\in \bigcup_nK_n$, the right-regular representation $\rho(g)$ defines a linear isometry of $L^2(K_n,E)$ for all but finitely many $n\in \N$. Therefore, as the ultrafilter $\ku U$ is non-principal, this means that we can define an isometric linear representation
$$
\tilde\rho \colon \bigcup_nK_n\curvearrowright W
$$
by letting 
$$
\tilde\rho(g)\big((\phi_n)_\ku U\big)=\big(\rho(f)\phi_n\big)_\ku U.
$$

We claim that $b\in Z^1(\bigcup_nK_n,\tilde \rho)$, i.e., that $b$ satisfies the cocycle identity $b(fh)=\tilde\rho(f)b(h)+b(f)$ for $f,h\in \bigcup_nK_n$. To see this, note that
\[\begin{split}
\tilde\rho(f)b(h)+b(f)
&=\tilde\rho(f)\big([h]_{K_n}-[1]_{K_n}\big)_\ku U+\big([f]_{K_n}-[1]_{K_n}\big)_\ku U\\
&= \big([fh]_{K_n}-[f]_{K_n}\big)_\ku U+\big([f]_{K_n}-[1]_{K_n}\big)_\ku U\\
&= \big([fh]_{K_n}-[1]_{K_n}\big)_\ku U\\
&=b(fh).
\end{split}\]
In particular, we see that the linear span of $b[\bigcup_nK_n]$ is $\tilde\rho[\bigcup_nK_n]$-invariant. Moreover, since every $\tilde\rho(f)$ is an isometry and $b$ is continuous, the same holds for the closed linear span $V\subseteq W$ of $b[G]$. 

We claim that, for every $\xi\in V$, the map $f\in \bigcup_nK_n\mapsto \tilde\rho(f)\xi$ is uniformly continuous. Since linear span of $b[\bigcup_nK_n]$ is dense in $V$, it suffices to prove this for $\xi\in b[\bigcup_nK_n]$. So fix some $g\in \bigcup_nK_n$ and note that, for $f,h\in \bigcup_nK_n$, we have
\[\begin{split}
\Norm{\tilde\rho(f)b(g)-\tilde\rho(h)b(g)}_W
&=\Norm{\big([fg]_{K_n}-[f]_{K_n}\big)_\ku U-\big([hg]_{K_n}-[h]_{K_n}\big)_\ku U}_W\\
&\leqslant\Norm{\big([fg]_{K_n}-[hg]_{K_n}\big)_\ku U}_W+\Norm{\big([h]_{K_n}-[f]_{K_n}\big)_\ku U}_W\\
&\leqslant\kappa_2\big( d(fg,hg)  \big)+\kappa_2\big( d(h,f)   \big).
\end{split}\]
Now, given $\eps>0$, there is $\delta>0$ so that $\kappa_2(\delta)<\frac \eps2$ and an $\eta>0$ so that $d(f,h)<\eta$ implies that $d(fg,hg)<\delta$. It follows that, provided $d(f,h)<\min \{\eta,\delta\}$, we have $\Norm{\tilde\rho(f)b(g)-\tilde\rho(h)b(g)}_W<\eps$, hence verifying uniform continuity.

Using our claim and the density of $\bigcup_nK_n$ in $G$, we can now uniquely extend $\tilde\rho$ to a strongly continuous isometric linear presentation $\tilde\rho\colon G\curvearrowright V$ so that the cocycle identity $b(fh)=\tilde\rho(f)b(h)+b(f)$ holds for all $f,h\in G$. 
It follows that we can define a continuous affine isometric action $\alpha\colon G\curvearrowright V $ by setting $\alpha(f)\xi=\tilde\rho(f)\xi+b(f)$ for $\xi\in V$ and $f\in G$. 

Finally, note that 
$$
\kappa_1\big(d(f,1)\big)\leqslant \norm{b(f)}_V\leqslant \kappa_2\big(d(f,1)\big)
$$ 
for all $f\in G$. 
\end{proof}

Repeating the proof of Theorem \ref{haagerup equiv} and noting that approximately compact metrisable groups are separable, we obtain the following equivalence.
\begin{thm}\label{super reflexive equiv}
The following are equivalent for an approximately compact metrisable group with the local property (OB),
\begin{enumerate}
\item $G$ admits a uniformly continuous  coarse embedding into a super-reflexive Banach  space $\eta \colon G\til E$,
\item $G$ admits a metrically proper continuous affine isometric action on a super-reflexive Banach space.
\end{enumerate}
\end{thm}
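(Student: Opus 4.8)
The plan is to mimic the proof of Theorem \ref{haagerup equiv} almost verbatim, substituting Theorem \ref{pestov} for Theorem \ref{maurey} and ``super-reflexive'' for ``Hilbert'' throughout. First I would record the two preliminary facts that let the machinery apply. Since $G$ is approximately compact, it is separable: each compact subgroup in the defining chain is a subspace of the metrisable group $G$, hence metrisable and compact, hence second countable and separable, so the dense union $\bigcup_nK_n$ is separable and witnesses separability of $G$. Combined with the local property (OB), Theorem \ref{existence of metrically proper} then supplies a metrically proper compatible left-invariant metric $d$ on $G$, which I fix once and for all.

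For the implication (2)$\saa$(1), suppose $\alpha\colon G\curvearrowright V$ is a metrically proper continuous affine isometric action on a super-reflexive space $V$, with associated cocycle $b\in Z^1(G,\pi)$. By the discussion preceding Observation \ref{cocycle}, continuity of $\alpha$ makes $b\colon (G,d)\til V$ uniformly continuous, and metric properness of $d$ makes it bornologous; Observation \ref{cocycle} then converts metric properness of $\alpha$ into the statement that $b$ is a coarse embedding. Taking $\eta=b$ and $E=V$ yields the desired uniformly continuous coarse embedding into a super-reflexive space.

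For (1)$\saa$(2), let $\eta\colon G\til E$ be a uniformly continuous coarse embedding into a super-reflexive space $E$, with compression and expansion moduli $\kappa_1,\kappa_2$. As a coarse embedding $\eta$ is bornologous, so it qualifies as the map $\sigma$ in the hypothesis of Theorem \ref{pestov}; since $G$ is approximately compact, that theorem produces a continuous affine isometric action $\alpha\colon G\curvearrowright V$ on a super-reflexive space $V$ whose cocycle $b$ satisfies
$$
\kappa_1\big(d(f,1)\big)\leqslant \norm{b(f)}_V\leqslant \kappa_2\big(d(f,1)\big)
$$
for all $f\in G$. Because $\eta$ is expanding, $\kappa_1(t)\til\infty$ as $t\til\infty$, so $\norm{b(f)}_V\til\infty$ whenever $d(f,1)\til\infty$, i.e., whenever $f\til\infty$, using that $d$ is metrically proper. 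Thus $b$, and hence $\alpha$, is metrically proper, establishing (2).

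There is no serious obstacle once Theorem \ref{pestov} and Observation \ref{cocycle} are in hand: the entire difficulty---averaging over the compact subgroups $K_n$ and passing to an ultraproduct while preserving uniform convexity---has already been absorbed into Theorem \ref{pestov}. The only points requiring a moment's care are the separability remark needed to invoke Theorem \ref{existence of metrically proper}, and the bookkeeping that the expansion of $\kappa_1$ is precisely what promotes the coarse-embedding hypothesis to metric properness of the resulting cocycle.
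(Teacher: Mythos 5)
Your proof is correct and follows exactly the route the paper intends: the paper's own proof of this theorem consists of the single remark that one should repeat the proof of Theorem \ref{haagerup equiv} with Theorem \ref{pestov} in place of Theorem \ref{maurey}, after noting that approximately compact metrisable groups are separable. You have simply written out those details (separability via the dense union of compact metrisable subgroups, Observation \ref{cocycle} for one direction, and the compression modulus estimate for the other), all of which are accurate.
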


The proof of Theorem \ref{pestov} is fairly flexible and allows for several variations preserving different local structure of the Banach space $E$. As a concrete example, we shall study the preservation of Rademacher type in the above construction. For that we fix a {\em Rademacher sequence}, i.e., a sequence $(\eps_n)_{n=1}^\infty$ of mutually independent random variables $\eps_n\colon \Omega\til \{-1,1\}$, where $(\Omega, \mathbb P)$ is some probability space, so that $\mathbb P(\eps_n=-1)=\mathbb P(\eps_n=1)=\frac 12$. E.g., we could take $\Omega=\{-1,1\}^\N$ with the usual coin tossing measure and let $\eps_n(\omega)=\omega(n)$.
\begin{defi}
A Banach space $X$ is  said to have {\em type} $p$ for some $1\leqslant p\leqslant 2$ if there is a constant $C$ so that
$$
\Big(\mathbb E\NORM{\sum_{i=1}^n\eps_ix_i}^p\Big)^\frac 1p\leqslant C\cdot\Big(\sum_{i=1}^n\norm{x_i}^p\Big)^\frac 1p
$$
for every finite sequence $x_1,\ldots, x_n\in X$.

Similarly, $X$ has {\em cotype} $q$ for some $2\leqslant q<\infty$ if there is a constant $K$ so that 
$$
\Big(\sum_{i=1}^n\norm{x_i}^q\Big)^\frac 1q\leqslant K\cdot  \Big(\mathbb E\NORM{\sum_{i=1}^n\eps_ix_i}^q\Big)^\frac 1q
$$
for every finite sequence $x_1,\ldots, x_n\in X$.
\end{defi}
We note that, by the triangle inequality, every Banach space has type $1$. Similarly, by stipulation, every Banach space is said to have cotype $q=\infty$.

Whereas the $p$ in the formula $\Big(\sum_{i=1}^n\norm{x_i}^p\Big)^\frac 1p$ is essential, this is not so with the $p$ in $\Big(\mathbb E\NORM{\sum_{i=1}^n\eps_ix_i}^p\Big)^\frac 1p$. Indeed, the Kahane--Khintchine inequality (see \cite{albiac}) states that, for all $1<p<\infty$, there is a constant $C_p$ so that, for every Banach space $X$ and $x_1,\ldots, x_n\in X$, we have
$$
\mathbb E\NORM{\sum_{i=1}^n\eps_ix_i}
\leqslant 
\Big(\mathbb E\NORM{\sum_{i=1}^n\eps_ix_i}^p\Big)^\frac 1p
\leqslant 
C_p\cdot\mathbb E\NORM{\sum_{i=1}^n\eps_ix_i}.
$$
In particular, for any $p,q\in [1,\infty[$, the two expressions $\Big(\mathbb E\NORM{\sum_{i=1}^n\eps_ix_i}^p\Big)^\frac 1p$ and $\Big(\mathbb E\NORM{\sum_{i=1}^n\eps_ix_i}^q\Big)^\frac 1q$ differ at most by some fixed multiplicative constant independent of the space $X$ and the vectors $x_i\in X$. We refer the reader to \cite{albiac} for more information on Rademacher type and cotype.

\begin{thm}\label{type}
Suppose $G$ is an approximately compact separable metrisable group with the local property (OB) admitting a uniformly continuous coarse embedding into a Banach space $E$ with type $p$ and cotype $q$ for some $1\leqslant p\leqslant 2\leqslant q\leqslant \infty$. Then $G$ admits a metrically proper affine isometric action on a  space with type $p$ and cotype $q$.\end{thm}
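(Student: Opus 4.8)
The plan is to re-run the construction from the proof of Theorem \ref{pestov} verbatim, feeding it the uniformly continuous coarse embedding $\eta\colon G\til E$ in the role of $\sigma$, and then to observe that the resulting Banach space carries type $p$ and cotype $q$ in place of super-reflexivity. First I would invoke Theorem \ref{existence of metrically proper} to fix a metrically proper compatible left-invariant metric $d$ on $G$, using the local property (OB); with respect to $d$ the map $\eta$ remains uniformly continuous and a coarse embedding, hence expanding and bornologous, so its moduli satisfy $\kappa_2(t)<\infty$ for all $t$ and $\kappa_1(t)\til\infty$ as $t\til\infty$. The construction then produces a continuous affine isometric action $\alpha\colon G\curvearrowright V$ with cocycle $b$ obeying $\kappa_1(d(f,1))\leqslant\norm{b(f)}_V\leqslant\kappa_2(d(f,1))$, and since $\kappa_1(t)\til\infty$ while $d$ is metrically proper, $b$ — and therefore $\alpha$ — is metrically proper. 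The key preliminary remark is that the construction of $\alpha$ in Theorem \ref{pestov} never actually used convexity of $E$: uniform convexity entered only to certify that $V$ was super-reflexive, and it is precisely this final certification step that I will replace.

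The heart of the matter is that the construction obtains $V$ from $E$ through only three operations, each of which preserves type $p$ and cotype $q$: (i) forming the vector-valued space $L^2(K_n,E)$, (ii) passing to the ultraproduct $W=\prod_\ku U L^2(K_n,E)$, and (iii) taking the closed linear span $V\subseteq W$ of $b[G]$. For (iii), type and cotype are trivially inherited by subspaces, since the defining inequalities involve only vectors lying in the subspace. For (ii), having type $p$ with a fixed constant, respectively cotype $q$ with a fixed constant, is a finitary condition — each is a family of inequalities in finitely many vectors — so it passes to ultraproducts with the same constant; here this is automatic once (i) is established, because the spaces $L^2(K_n,E)$ satisfy the type and cotype inequalities with constants depending only on those of $E$, hence uniformly in $n$.

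The one genuinely computational point, and thus the main obstacle, is (i): that $L^2(\mu;E)$ has type $p$ and cotype $q$ whenever $E$ does, with constants controlled by those of $E$. The cases $p=1$ and $q=\infty$ are vacuous, so I would assume $1<p\leqslant2\leqslant q<\infty$. For type $p$ the argument runs through Fubini and the Kahane--Khintchine inequality \cite{albiac}: one computes the relevant Rademacher average pointwise over $\mu$, applies the type $p$ inequality of $E$ inside the integral, and then uses that $2/p\geqslant 1$ together with the triangle inequality in $L^{2/p}(\mu)$ — a Minkowski-type step — to interchange the sum and the integral. The cotype estimate is dual, relying on Minkowski's integral inequality in the opposite configuration. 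Since every constant produced depends only on the type and cotype constants of $E$ and on the Kahane--Khintchine constant, it is independent of $\mu$, which is exactly the uniformity required by step (ii). Assembling (i)--(iii) shows that $V$ has type $p$ and cotype $q$, completing the proof.
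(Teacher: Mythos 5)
Your proposal is correct and follows essentially the same route as the paper: rerun the construction of Theorem \ref{pestov} and observe that type $p$ and cotype $q$ survive the passage $E\mapsto L^2(K_n,E)\mapsto W\mapsto V$. The only cosmetic difference is at the ultraproduct step, where the paper notes that the $L^2(K_n,E)$ are mutually isometric, so $W$ is an ultrapower and hence finitely representable in $L^2(K_1,E)$, whereas you argue directly that the finitary type/cotype inequalities with uniform constants pass to ultraproducts --- two standard phrasings of the same local-property argument.
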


\begin{proof}Suppose $\sigma\colon G\til E$ is a uniformly continuous coarse embedding into a space $E$ with type $p$ and cotype $q$. We then repeat the proof of Thorem \ref{pestov} using the fact that, as $E$ has type $p$ and cotype $q$, so do the spaces $L^2(K_n,E)$. Since also $L^2(K_n,E)$ and $L^2(K_m,E)$ are isometric for all $n,m$, the ultraproduct $W$ of the proof is isometric to the ultrapower of a single space $L^2(K_1,E)$. However, the ultrapower $X^\ku U$ of a Banach space $X$ is {\em finitely representable} in $X$, meaning that every finite-dimensional subspace $F\subseteq X^\ku U$ almost isometrically embeds into $X$. Thus, in particular, $W$ is finitely representable in $L^2(K_1,E)$ and therefore has type $p$ and cotype $q$. This similarly holds for the  subspace $V=\overline{\rm span}\big(b[G]\big)\subseteq W$, which finishes  the proof. 
\end{proof}

We mention that, by results of W. Orlicz and G. Nordlander (see \cite{albiac}),  the space $L^p$ has type $p$ and cotype $2$, whenever $1\leqslant p\leqslant 2$, and type $2$ and cotype $p$, whenever $2\leqslant p<\infty$. So Theorem \ref{type} applies, in particular, when $G$ is a separable approximately compact metrisable group with the local property (OB) admitting a uniformly continuous coarse embedding into an $L^p$ space.

In the interval $1\leqslant p\leqslant 2$,  Theorem \ref{maurey} gives us a somewhat better result, since it follows from results of J. Bretagnolle, D. Dacunha-Castelle and J.-L. Krivine \cite{bretagnolle} that $L^p$ coarsely embeds into $L^2$ for all $p\in [1,2]$.


\subsection{Stable metrics and reflexive spaces}
The case of reflexive spaces trivialises for locally compact second countable groups, since N. Brown and E. Guentner \cite{BG} showed that every countable discrete group admits a proper affine isometric action on a reflexive Banach space and U. Haagerup and  A. Przybyszewska \cite{haagerup-affine} generalised this to locally compact second countable groups.

However, for general metrisable groups, the situation is significantly more complicated. Indeed, results of A. Shtern \cite{shtern} and M. Megrelishvili \cite{megrelishvili2}, show that a topological group $G$ admits a topologically faithful isometric linear representation on a reflexive Banach space, i.e., $G$ is isomorphic to a subgroup of the linear isometry group of a reflexive Banach space with the strong operator topology, if and only if the continuous weakly almost periodic functions on $G$ separate points and closed sets. 
Moreover, there are examples, such as the group ${\rm Homeo}_+[0,1]$ of increasing homeomorphisms of the unit interval \cite{megrelishvili1}, that admit no non-trivial continuous linear actions on a reflexive space.

We recall that a bounded  function $\phi\colon G\til \R$ is said to be {\em weakly almost periodic} provided that its orbit $\lambda(G)\phi=\{\phi(g\inv\,\cdot\, )\del g\in G\}$ under the left regular representation is a relatively weakly compact subset of $\ell^\infty(G)$. Also, by a result of A. Grothendieck \cite{grothendieck}, the weakly almost periodic functions on $G$ may be characterised by the following double limit criterion.

\begin{thm}[A. Grothendieck \cite{grothendieck}]
A a bounded  function $\phi\colon G\til \R$ on a group $G$ is weakly almost periodic if and only if, for all sequences $(g_n)$ and $(f_m)$ in $G$ and ultrafilters $\ku U$ and $\ku V$ on $\N$, we have
$$
\lim_{n\til \ku U}\lim_{m\til \ku V}\phi(g_nf_m)=\lim_{m\til \ku V}\lim_{n\til \ku U}\phi(g_nf_m).
$$
\end{thm}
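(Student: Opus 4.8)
The plan is to derive this group-theoretic statement from Grothendieck's classical functional-analytic characterisation of relative weak compactness in a space of continuous functions. First I would identify $\ell^\infty(G)$ isometrically with $C(\beta G)$, where $\beta G$ is the Stone--\v{C}ech compactification of the discrete group $G$; under this identification the points of $G$ form a dense subset, every $\psi\in\ell^\infty(G)$ extends to a continuous function on $\beta G$, and for a sequence $(f_m)$ in $G$ and an ultrafilter $\ku V$ on $\N$ the point $\eta=\lim_{m\to\ku V}f_m\in\beta G$ satisfies $\psi(\eta)=\lim_{m\to\ku V}\psi(f_m)$. Writing $h_g=\phi(g\inv\,\cdot\,)$ for a typical orbit element, so that $h_g(x)=\phi(g\inv x)$ and $\lambda(G)\phi=\{h_g\}$, and noting that the criterion quantifies over \emph{all} sequences $(g_n)$ so that $g_n$ may be freely replaced by $g_n\inv$, the displayed double-limit condition becomes exactly the iterated-limit condition for the family $\lambda(G)\phi$ evaluated at points of $G\subseteq\beta G$, with both iterated limits taken along $\ku U,\ku V$.

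For the direction that weak almost periodicity implies the double-limit condition, I would argue directly. Assuming $\lambda(G)\phi$ relatively weakly compact and fixing $(g_n),(f_m),\ku U,\ku V$, the ultrafilter limit $h=\lim_{n\to\ku U}h_{g_n}$ exists in the weak topology, since the weak closure is weakly compact. Weak convergence in $C(\beta G)$ forces pointwise convergence on $\beta G$, because each evaluation $\delta_\xi$ is a weakly continuous functional, so $h(\xi)=\lim_{n\to\ku U}h_{g_n}(\xi)$ for all $\xi$ and $h\in C(\beta G)$. Putting $\eta=\lim_{m\to\ku V}f_m$ and using the continuity of each $h_{g_n}$ and of $h$ at $\eta$, both iterated limits collapse to the single value $h(\eta)$:
$$
\lim_{n\to\ku U}\lim_{m\to\ku V}h_{g_n}(f_m)=\lim_{n\to\ku U}h_{g_n}(\eta)=h(\eta)=\lim_{m\to\ku V}h(f_m)=\lim_{m\to\ku V}\lim_{n\to\ku U}h_{g_n}(f_m).
$$
Translating back through $h_{g_n}(f_m)=\phi(g_n\inv f_m)$ gives the required equality.

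For the converse, I would invoke Grothendieck's theorem in its nontrivial direction: a bounded subset of $C(K)$ is relatively weakly compact if and only if its iterated-limit condition holds for all sequences of functions in the set and all sequences of points in $K$. Applied with $K=\beta G$ this asks for the condition at arbitrary point-sequences in $\beta G$, whereas the hypothesis only supplies it for point-sequences drawn from $G$, and bridging this gap is the crux. I would do so one function-sequence at a time, as permitted by the Eberlein--\v{S}mulian theorem (relative weak compactness is sequentially determined): given $(g_n)$, the bounded map $x\mapsto\big(\phi(g_n\inv x)\big)_n$ extends continuously from $G$ to $\beta G$ with image a \emph{metrisable} compact subspace $K_0$ of the cube $[-\norm{\phi}_\infty,\norm{\phi}_\infty]^{\N}$, and pullback along this surjection is an isometric, weakly bicontinuous embedding carrying the coordinate functions to the $h_{g_n}$; hence weak compactness of $\{h_{g_n}\}$ in $C(\beta G)$ is equivalent to that of the coordinate family in $C(K_0)$. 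Because $K_0$ is metrisable and the image of $G$ is dense in it, any failure of the iterated-limit condition in $C(K_0)$ is witnessed along an honest convergent sequence of points coming from $G$ together with a subsequence of $(g_n)$; refining to ultrafilters $\ku U,\ku V$ then contradicts the hypothesis. Thus $\lambda(G)\phi$ is relatively weakly compact and $\phi$ is weakly almost periodic.

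The main obstacle is exactly this last reduction from $\beta G$ to $G$. Since the functions $h_{g_n}$ are in general not equicontinuous, one cannot naively approximate a test point of $\beta G$ by a point of $G$ while preserving \emph{both} iterated limits at once; a crude perturbation controls the inner limit in one order but not the other. The device that circumvents this is the passage to the metrisable quotient $K_0$, where the discontinuity of a pointwise cluster value of $(h_{g_n})$ is detected along a genuine convergent sequence from the dense image of $G$, so that the ultrafilter double limit taken over $G$ alone already captures the obstruction to weak compactness.
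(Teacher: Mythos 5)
The paper does not actually prove this statement; it is quoted as a classical theorem of Grothendieck, so there is no internal proof to compare yours against. Judged on its own terms, your forward direction is complete and correct: identifying $\ell^\infty(G)$ with $C(\beta G)$, taking the weak ultrafilter limit $h$ of the orbit functions and the point $\eta=\lim_{m\to\mathcal V}f_m\in\beta G$, and collapsing both iterated limits to $h(\eta)$ works exactly as you describe.

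The converse, however, has a genuine gap, and it sits precisely at the point you yourself flag as the crux. Passing to the metrisable quotient $K_0\subseteq[-\norm{\phi}_\infty,\norm{\phi}_\infty]^{\N}$ does not buy what you claim. Density of $\Theta(G)$ in $K_0$ is density for the product topology, which controls only finitely many coordinates $\pi_{n_1},\ldots,\pi_{n_j}$ at a time; but the value of a discontinuous pointwise cluster function $p=\lim_{n\to\mathcal U}\pi_n$ at a point of $K_0$ is an ultrafilter limit of \emph{all} the coordinates. Consequently, a sequence $z_i\to y_0$ in $K_0$ witnessing the discontinuity of $p$ (which metrisability does give you) consists of points of $K_0=\overline{\Theta(G)}$, not of $\Theta(G)$, and there is no reason why points of $\Theta(G)$ close to $z_i$ should still satisfy $|p(\cdot)-p(y_0)|>\eps$: perturbing $z_i$ inside $\Theta(G)$ preserves the first $j$ coordinates but can change the tail, hence the value of $p$, arbitrarily. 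This is exactly the obstruction you identified in $\beta G$, reproduced verbatim in $K_0$; the metrisable quotient helps with extracting convergent sequences, not with evaluating the discontinuous cluster function on the dense set. The standard repair is the interleaved back-and-forth underlying Grothendieck's Th\'eor\`eme 6 (equivalently the Kelley--Namioka iterated limit theorem, or Pt\'ak's combinatorial lemma): one chooses the indices $n_k$ and the test points $x_j\in G$ alternately, arranging that every condition imposed on a newly chosen point of $G$ is a finite conjunction of \emph{open} conditions involving only the finitely many continuous functions already fixed, while all conditions referring to $p$ are imposed on the \emph{functions}, evaluated at the finitely many points already chosen. Without this construction --- or an explicit appeal to the dense-subset form of Grothendieck's double-limit criterion, which is itself the nontrivial content here --- the converse direction is not established.
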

With a bit of effort, one may verify that, equivalently,  $\phi$ is weakly almost periodic if and only if, for all sequences $(g_n)$ and $(f_m)$ in $G$, we have
$$
\lim_{n\til \infty}\lim_{m\til \infty}\phi(g_nf_m)=\lim_{m\til \infty}\lim_{n\til \infty}\phi(g_nf_m).
$$
whenever the two limits exist.

Motivated by the notion of stability in model theory, J.-L. Krivine and B. Maurey \cite{KM} isolated the concept of a {\em stable norm} on a Banach space, which equivalently can be defined in terms of stability of the metric.

\begin{defi}
A metric $d$ on a set $X$ is said to be {\em stable} if, for all $d$-bounded sequences $(x_n)$ and $(y_m)$ in $X$ and ultrafilters $\ku U$ and $\ku V$ on $\N$, we have
$$
\lim_{n\til \ku U}\lim_{m\til \ku V}d(x_n,y_m)=\lim_{m\til \ku V}\lim_{n\til \ku U}d(x_n,y_m).
$$
\end{defi}
We remark that a simple, but tedious, inspection shows that, if $d$ is stable, then so is the uniformly equivalent bounded metric $D(x,y)=\max\{d(x,y),1\}$. 

Now, N. Kalton \cite{kalton} showed that every stable metric space may be coarsely embedded into a reflexive Banach space. Moreover, he also showed that, e.g., the Banach space $c_0$ does not admit a coarse embedding into a reflexive Banach space.

For the case of groups, I. Ben Yaacov, A. Berenstein and S. Ferri \cite{ben yaacov} showed that a metrisable group admits a compatible left-invariant and stable metric if and only if it admits a topologically faithful  isometric linear  representation on a reflexive space.

Our goal here is to provide a group theoretical counter-part of Kalton's theorem, that is, we wish to construct metrically proper continuous affine isometric actions on Banach spaces of topological groups admitting metrically proper stable compatible left-invariant metrics.

\begin{thm}\label{stable metric refl}
Suppose a topological group $G$ carries a compatible left-invariant metrically proper stable metric. Then $G$ admits a metrically proper continuous affine isometric action on a reflexive Banach space.
\end{thm}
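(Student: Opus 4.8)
The plan is to reduce the statement to the sufficient criterion established just above, namely the result asserting that $G$ acts metrically properly by affine isometries on a reflexive space as soon as, for every $\alpha>0$, there is a continuous weakly almost periodic $\phi\in\ell^\infty(G)$ with $d$-bounded support satisfying $\phi\equiv 1$ on $D_\alpha=\{g\in G\del d(g,1)\leqslant\alpha\}$. Fixing the given stable, metrically proper, compatible left-invariant metric $d$, all I must do is manufacture such functions $\phi$ for each $\alpha$.

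My candidate is a radial bump. Choose a continuous $\theta\colon[0,\infty[\til[0,1]$ with $\theta\equiv 1$ on $[0,\alpha]$ and $\theta\equiv 0$ on $[\alpha+1,\infty[$, and set $\phi(g)=\theta\big(d(g,1)\big)$. Since $d$ is compatible, $g\mapsto d(g,1)$ is continuous, so $\phi$ is continuous; it is bounded, equals $1$ on $D_\alpha$, and is supported inside $D_{\alpha+1}$, which is $d$-bounded. The only substantive point is weak almost periodicity, which I would verify through Grothendieck's double limit criterion.

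So fix sequences $(g_n),(f_m)$ in $G$ and ultrafilters $\ku U,\ku V$ on $\N$. Left-invariance gives $\phi(g_nf_m)=\theta\big(d(f_m,g_n\inv)\big)$, and I split on the ultrafilter limits $A=\lim_{n\to\ku U}d(g_n,1)$ and $B=\lim_{m\to\ku V}d(f_m,1)$ in $[0,\infty]$. If both are finite, then after discarding sets outside $\ku U$, respectively $\ku V$, the sequences $(g_n\inv)$ and $(f_m)$ are genuinely $d$-bounded; stability of $d$ equates the two iterated limits of $d(f_m,g_n\inv)$, and continuity of $\theta$ (ultrafilter limits commuting with continuous functions) transports this equality to $\phi$. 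If instead $A=\infty$ or $B=\infty$, the reverse triangle inequality $d(f_m,g_n\inv)\geqslant |d(f_m,1)-d(g_n,1)|$ pushes $d(f_m,g_n\inv)$ out of the support $[0,\alpha+1]$ of $\theta$ along the relevant ultrafilter, so both iterated limits of $\phi(g_nf_m)$ vanish. In every case the two iterated limits coincide, so $\phi$ is weakly almost periodic.

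The crux, and the step I expect to be the main obstacle, is exactly this weak-almost-periodicity check: stability only constrains $d$-bounded sequences, whereas Grothendieck's criterion ranges over arbitrary ones, so the verification must be arranged so that the unbounded regimes are neutralised by the bounded support of $\theta$ while the genuinely bounded regime is where stability is spent. Once the functions $\phi$ are produced for all $\alpha$, the quoted criterion immediately yields the metrically proper continuous affine isometric action on a reflexive Banach space, finishing the proof.
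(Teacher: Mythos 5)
Your proposal is correct and follows essentially the same route as the paper: the paper likewise reduces to the weakly-almost-periodic criterion (Theorem \ref{wap refl}) by taking the radial bump $\phi_\alpha(g)=\theta\big(d(g,1)\big)$ for a piecewise-linear cutoff $\theta$, merely asserting its weak almost periodicity where you carry out the Grothendieck double-limit verification in full (correctly, including the neutralisation of the unbounded regimes by the reverse triangle inequality and the bounded support of $\theta$). The only difference is that you make explicit the one step the paper leaves to the reader.
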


We should mention that this theorem is far from establishing an equivalence. Indeed, the Tsirelson space $T$ \cite{tsirelson} is a separable reflexive Banach space not containing isomorphic copies of any $\ell^p$, $1\leqslant p<\infty$, nor of $c_0$. Now, as noted in Exampe \ref{banach2},  the norm-metric on the additive group $(X,+)$ of a Banach space $(X,\norm\cdot)$ is maximal.  It follows that the translation action of $T$ on itself is a metrically proper affine action on a reflexive space. However, by a result of Y. Raynaud (Thm. 4.1 \cite {raynaud}), if the additive group of an infinite-dimensional Banach space $E$ admits a compatible invariant stable metric, then $E$ must contain an isomorphic copy of some $\ell^p$, $1\leqslant p<\infty$. In other words, $T$ has no equivalent invariant stable metric, but has a metrically proper affine isometric action on a reflexive space.

In fact, the proof of Theorem \ref{stable metric refl} will also require something less than a stable metric, namely, the existence of a  sufficiently separating family of continuous weakly almost periodic functions.

\begin{thm}\label{wap refl}
Suppose $d$ is a compatible left-invariant metric on a topological group $G$ and assume  that, for all $\alpha>0$, there is a continuous weakly almost periodic function $\phi\in \ell^\infty(G)$  with $d$-bounded support so that $\phi\equiv 1$ on $D_\alpha=\{g\in G\del d(g,1)\leqslant \alpha\}$.

Then $G$ admits a continuous  isometric action $\pi\colon G\curvearrowright X$ on a reflexive Banach space $X$ with a corresponding continuous and metrically proper cocycle $b\colon (G,d)\til X$.
\end{thm}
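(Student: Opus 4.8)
The plan is to build the reflexive space as an $\ell^2$-sum of Davis--Figiel--Johnson--Pe\l czy\'nski (DFJP) interpolation spaces, one for each of a carefully chosen sequence of weakly almost periodic functions, and to read the cocycle off from the right-regular representation. The decisive structural point is that for a \emph{left}-invariant metric $d$ the right-regular representation $\rho(g)\phi=\phi(\,\cdot\,g)$ displaces every point by a \emph{constant} amount, $d(xg,x)=d(g,1)$ for all $x$. Since Grothendieck's double-limit criterion is symmetric in its two sequences, $\phi$ is weakly almost periodic if and only if the right orbit $\rho(G)\phi$ is relatively weakly compact in $\ell^\infty(G)$, so I would work with $\rho$ throughout. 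Given a continuous WAP $\phi$ with $0\leqslant\phi\leqslant1$, let $W_\phi$ be the closed absolutely convex hull of $\rho(G)\phi$; it is weakly compact by Krein's theorem, is $\rho(G)$-invariant, and lies in $B_{\ell^\infty(G)}$. The DFJP construction applied to $W_\phi$ yields a reflexive $X_\phi$ with a contractive injection $j_\phi\colon X_\phi\til\ell^\infty(G)$ and $W_\phi\subseteq j_\phi(B_{X_\phi})$; as each $\rho(g)$ is an $\ell^\infty$-isometry preserving $W_\phi$, it preserves every DFJP gauge and so restricts to an isometry $\pi_\phi(g)$ of $X_\phi$. Since $\phi,\rho(g)\phi\in W_\phi\subseteq X_\phi$, the map $b_\phi(g)=\rho(g)\phi-\phi$ lies in $Z^1(G,\pi_\phi)$.

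Two DFJP estimates drive the argument. First, contractivity of $j_\phi$ gives $\|b_\phi(g)\|_{X_\phi}\geqslant\|b_\phi(g)\|_\infty\geqslant|\phi(1)-\phi(g)|$, which is $\geqslant1$ whenever $\phi(1)=1$ and $g$ lies outside the support of $\phi$. Second, writing $b_\phi(g)\in 2W_\phi$ and comparing the two DFJP layers $2^kW_\phi$ and $2^{-k}B_{\ell^\infty(G)}$ at their crossover yields the interpolation bound $\|b_\phi(g)\|_{X_\phi}^2\leqslant C\|b_\phi(g)\|_\infty$ once $\|b_\phi(g)\|_\infty\leqslant1$, with $C$ absolute. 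Constancy of displacement then gives $\|b_\phi(g)\|_\infty=\sup_x|\phi(xg)-\phi(x)|\leqslant\omega_\phi(d(g,1))$, where $\omega_\phi$ is the modulus of uniform continuity of $\phi$. Finally, the identity $b_\phi(g)-b_\phi(h)=\pi_\phi(h)\,b_\phi(h^{-1}g)$ combined with these bounds (and $d(h^{-1}g,1)=d(g,h)$) shows that $\pi_\phi$ is strongly continuous and $b_\phi$ is uniformly continuous.

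To assemble the action I would produce uniformly continuous WAP functions $\phi_n$ with $\phi_n(1)=1$, supports contained in balls $D_{\beta_n}$ with $\beta_n\uparrow\infty$, and Lipschitz constants $L_n$ with $\sum_nL_n<\infty$; then set $X=\big(\bigoplus_nX_{\phi_n}\big)_{\ell^2}$, which is reflexive, together with $\pi=\bigoplus_n\pi_{\phi_n}$ and $b(g)=\big(b_{\phi_n}(g)\big)_n$. For fixed $g$ the interpolation bound gives $\sum_n\|b_{\phi_n}(g)\|_{X_{\phi_n}}^2\leqslant\sum_nC\min(1,L_nd(g,1))\leqslant Cd(g,1)\sum_nL_n<\infty$, so $b(g)\in X$ and $b\in Z^1(G,\pi)$, and the uniform form of the same estimate makes $b$ continuous. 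For properness, every $n$ with $\beta_n<d(g,1)$ contributes $\|b_{\phi_n}(g)\|_{X_{\phi_n}}^2\geqslant1$ by the first estimate, while $|\{n:\beta_n<d(g,1)\}|\til\infty$ as $d(g,1)\til\infty$; hence $\|b(g)\|_X\til\infty$, and since $d$ is metrically proper this is precisely metric properness of $b$.

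\textbf{The main obstacle.} Properness forces infinitely many components to be active at large distances, whereas membership of $b(g)$ in the $\ell^2$-sum forces the tail $\|b_{\phi_n}(g)\|$ to be summable for each fixed $g$; these pull against each other unless the $\phi_n$ carry summable Lipschitz constants, and the merely continuous bumps given by hypothesis need not be Lipschitz at all. I would resolve this by building each $\phi_n$ as a normalized sum $\frac1{m_n}\sum_{j=1}^{m_n}\eta_j$ of the hypothesized functions, where $\eta_j\equiv1$ on $D_{a_j}$ is supported in $D_{b_j}$ and the radii are chosen successively with $a_{j+1}$ far beyond $b_j$, so that the transition annuli $[a_j,b_j]$ are disjoint and separated by gaps of width at least $s_n$. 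Such a sum is again WAP, equals $1$ at the identity, is supported in $D_{b_{m_n}}$, and — because each summand alters the total by at most $\frac1{m_n}$ while a ball of radius $R$ meets at most $R/s_n+3$ of the separated annuli — obeys $\omega_{\phi_n}(R)\leqslant\frac1{m_n}(R/s_n+3)$, \emph{independently} of the uncontrolled steepness of the individual $\eta_j$. Choosing $m_n,s_n$ so that $1/m_n$ and $1/(m_ns_n)$ are summable then yields the required summable Lipschitz constants. Establishing that this staircase genuinely controls the modulus is the crux; the DFJP interpolation inequality and the remaining bookkeeping are routine.
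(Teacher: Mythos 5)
Your proposal is correct and follows essentially the same route as the paper: normalized sums of nested WAP bumps with separated transition annuli to get a staircase with controlled modulus, the Davis--Figiel--Johnson--Pe\l czy\'nski interpolation spaces built from the absolutely convex hull of the orbit, and an $\ell^2$-sum in which the upper interpolation estimate gives convergence of the cocycle while the lower estimate outside the supports gives properness. The only differences are cosmetic — you use the right-regular representation (so displacement is exactly $d(g,1)$) where the paper uses the left-regular one with a radial-annulus argument, and your claimed exact contractivity of $j_\phi$ should be a uniform lower bound $\|\cdot\|_{X_\phi}\geqslant c\|\cdot\|_\infty$ with $c$ an absolute constant, which suffices.
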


\begin{proof}Under the given assumptions, we claim that, for every integer $n\geqslant 1$,  there is a  continuous weakly almost periodic function $0\leqslant \phi_n\leqslant 1$ on $G$ so that
\begin{enumerate}
\item $\norm{\phi_n}_\infty=\phi(1)=1$,
\item$\norm{\phi_n-\lambda(g)\phi_n}_\infty\leqslant \frac 1{4^n} \text{ for all } g\in D_n$ and 
\item ${\rm supp}(\phi_n)$ is $d$-bounded. 
\end{enumerate}

To see this, we pick inductively sequences of  continuous weakly almost periodic functions $(\psi_i)_{i=1}^{4^n}$ and radii $(r_i)_{i=0}^{4^n}$ so that
\begin{enumerate}
\item[(i)] $0=r_0<2n<r_1<r_1+2n<r_2<r_2+2n<r_3<\ldots<r_{4^n}$,
\item[(ii)] $0\leqslant \psi_i\leqslant 1$,
\item[(iii)] ${\psi}_{i}\equiv 1$ on $D_{r_{i-1}+n}$,
\item[(iv)] ${\rm supp}(\psi_i)\subseteq D_{r_i}$.
\end{enumerate}
Note first that, by the choice of $r_i$, the sequence
$$
D_{r_0+n}\setminus D_{r_0},\; D_{r_1}\setminus D_{r_0+n}, \;D_{r_1+n}\setminus D_{r_1},\; D_{r_2}\setminus D_{r_1+n}, \; \ldots\;, D_{r_{4^n}}\setminus D_{r_{4^n-1}+n}, \; G\setminus D_{r_{4^n}}
$$
partitions $G$. Also, for all $1\leqslant i\leqslant 4^n$, 
$$
\psi_1\equiv \ldots\equiv  \psi_i\equiv 0, \text{ while } \psi_{i+1}\equiv \ldots\equiv \psi_{4^n}\equiv 1\text{ on }D_{r_i+n}\setminus D_{r_i}
$$
and 
$$
\psi_1\equiv \ldots\equiv \psi_{i-1}\equiv 0, \text{ while } \psi_{i+1}\equiv \ldots\equiv  \psi_{4^n}\equiv 1\text{ on }D_{r_i}\setminus D_{r_{i-1}+n}.
$$
Setting  $\phi_n=\frac 1{4^n}\sum_{i=1}^{4^n}\psi_i$, we note that, for  all $1\leqslant i\leqslant 4^n$, 
$$
\phi_n\equiv \frac {4^n-i}{4^n} \quad \text{ on }D_{r_i+n}\setminus D_{r_i}
$$
and
$$
 \frac {4^n-i}{4^n}\leqslant \phi_n\leqslant  \frac {4^n-i+1}{4^n} \quad \text{ on }D_{r_{i}}\setminus D_{r_{i-1}+n}.
$$

Now, if $g\in D_n$ and $f\in G$, then $|d(g\inv f,1)-d(f,1)|=|d(f,g)-d(f,1)|\leqslant d(g,1)\leqslant n$. So, if $f$ belongs to some term in the above partition, then $g\inv f$ either belongs to the immediately preceding, the same or the immediately following term of the partition. By the above estimates on $\phi_n$, it follows that $|\phi_n(f)-\phi_n(g\inv f)|\leqslant \frac1{4^n}$.
In other words, for $g\in D_n$, we have
\[\begin{split}
\norm{\phi_n-\lambda(g)\phi_n}_\infty= \sup_{f\in G}|\phi_n(f)-\phi_n(g\inv f)|\leqslant \frac 1{4^n},
\end{split}\]
which verifies condition (2). Conditions (1) and (3) easily follow from the construction.

Consider now a specific $\phi_n$ as above and define 
$$
W_n=\ov{\rm conv}\big(\lambda(G)\phi_n\cup -\lambda(G)\phi_n\big)\subseteq \ell^\infty(G)
$$
and, for every $k\geqslant 1$, 
$$
U_{n,k}=2^kW_n+2^{-k}B_{\ell^\infty},
$$
where $B_{\ell^\infty}$ denotes the unit ball in ${\ell^\infty}(G)$. Let $\norm{\cdot}_{n,k}$ denote the gauge on ${\ell^\infty}(G)$ defined by $U_{n,k}$, i.e., 
$$
\norm{\psi}_{n,k}=\inf(\alpha>0\del \psi\in \alpha\cdot U_{n,k}).
$$

If $g\in D_n$, then $\norm{\phi_n-\lambda(g)\phi_n}_\infty\leqslant \frac 1{4^n}$ and so, for $k\leqslant n$, 
$$
\phi_n-\lambda(g)\phi_n\in \frac 1{2^n}\cdot 2^{-k}B_{\ell^\infty}\subseteq \frac 1{2^n}\cdot U_{n,k}.
$$
In particular, 
\begin{equation}\label{a}
\norm{\phi_n-\lambda(g)\phi_n}_{n,k}\leqslant \frac 1{2^n},\;\;\text{ for all } k\leqslant n \text{ and }g\in D_n.
\end{equation}

On the other hand,  for all $g\in G$ and $k$, we have $\phi_n-\lambda(g)\phi_n\in 2W_n\subseteq \frac1{2^{k-1}}U_{n,k}$. Therefore,
\begin{equation}\label{b}
\norm{\phi_n-\lambda(g)\phi_n}_{n,k}\leqslant \frac 1{2^{k-1} },\;\;\text{ for all } k\text{ and }g.
\end{equation}

Finally, if $g\notin ({\rm supp}\;\phi_n)\inv$, then  $\norm{\phi_n-\lambda(g)\phi_n}_{n,1}\geqslant \frac 12\norm{\phi_n-\lambda(g)\phi_n}_\infty\geqslant \frac 12$. So 
\begin{equation}\label{c}
\norm{\phi_n-\lambda(g)\phi_n}_{n,1}\geqslant \frac 12,\;\;\text{ for all } g\notin ({\rm supp}\;\phi_n)\inv.
\end{equation}

It follows from (\ref{a}) and (\ref{b}) that, for $g\in D_n$, we have 
\[\begin{split}
\sum_k\norm{\phi_n-\lambda(g)\phi_n}^2_{n,k}
&\leqslant \underbrace{\Big(\frac 1{2^n}\Big)^2+\ldots+\Big(\frac 1{2^n}\Big)^2}_{n\text{ times}}
+ \Big(\frac 1{2^{(n+1)-1}}\Big)^2+ \Big(\frac 1{2^{(n+2)-1}}\Big)^2+\ldots\\
&\leqslant\frac 1{2^{n-1}},
\end{split}\]
while using (\ref{c}) we have, for $g\notin ({\rm supp}\;\phi_n)\inv$, 
$$
\sum_k\norm{\phi_n-\lambda(g)\phi_n}^2_{n,k}\geqslant \frac1{4}.
$$

Define $\triple{\cdot}_n$ on $\ell^\infty(G)$ by $\triple{\psi}_n=\big(\sum_k\norm{\psi}_{n,k}^2\big)^{\frac 12}$ and set
$$
X_n=\{\psi\in \ov{\rm span}(\lambda(G)\phi_n)\subseteq \ell^\infty(G)\del \triple{\psi}_n<\infty\}\subseteq \ell^\infty(G).
$$
By the main result of  W. J. Davis, T. Figiel, W. B. Johnson and A. Pe\l czy\'nski \cite{dfjp}, the interpolation space $(X_n,\triple{\cdot}_n)$ is a  reflexive Banach space.  Moreover, since $W_n$ and $U_{n,k}$ are $\lambda(G)$-invariant subsets of $\ell^\infty(G)$, one sees that $\norm{\cdot}_{n,k}$ and $\triple{\cdot}_n$ are $\lambda(G)$-invariant and hence we have an isometric linear representation $\lambda\colon G\curvearrowright (X_n, \triple{\cdot}_n)$.

Note that, since $\phi_n\in W_n$, we have $\phi_n\in X_n$ and can therefore define a cocycle $b_n\colon G\til X_n$ associated to $\lambda$ by $b_n(g)=\phi_n-\lambda(g)\phi_n$. 
By the estimates above, we have
$$
\triple{b_n(g)}_n=\triple{\phi_n-\lambda(g)\phi_n}_n\leqslant\big( \frac 1{{\sqrt 2}}\big)^{n-1}
$$ 
for $g\in D_n$, while
$$
\triple{b_n(g)}_n=\triple{\phi_n-\lambda(g)\phi_n}_n\geqslant\frac1{2}
$$
for $g\notin ({\rm supp}\;\phi_n)\inv$.

Let now $Y=\big(\bigoplus_n(X_n,\triple{\cdot}_n)\big)_{\ell^2}$ denote the  $\ell^2$-sum of the spaces $(X_n,\triple{\cdot}_n)$. Let also $\pi\colon G\curvearrowright Y$ be the diagonal action and $b=\bigoplus b_n$ the corresponding cocycle. To see that $b$ is well-defined, note that, for $g\in D_n$, we have
\[\begin{split}
\norm{b(g)}_Y
&=\Big(\sum_{m=1}^\infty\triple{b_m(g)}_m^2\Big)^\frac 12\\
&=\Big(\text{finite}+\sum_{m=n}^\infty\triple{b_m(g)}_m^2\Big)^\frac 12\\
&\leqslant\Big(\text{finite}+\sum_{m=n}^\infty\frac 1{2^{m-1}}\Big)^\frac 12\\
&<\infty,
\end{split}\]
so $b(g)\in Y$.

Remark that, whenever $g\notin ({\rm supp}\;\phi_n)\inv$, we have
$$
\norm{b(g)}_Y\geqslant \Big(  \underbrace{\big(\frac 12\big)^2+\ldots+\big(\frac 12\big)^2}_{n\text{ times}}  \Big)^\frac 12=\frac {\sqrt n}2.
$$  
As $({\rm supp}\;\phi_n)\inv$ is $d$-bounded, this 
shows that the cocycle $b\colon (G,d)\til Y$ is metrically proper. We leave the verification that the action is continuous to the reader.
\end{proof}

Let us now see how to deduce Theorem \ref{stable metric refl} from Theorem \ref{wap refl}. So fix a compatible metrically proper stable left-invariant stable metric $d$ on $G$ with corresponding balls $D_\alpha$. Then, for every $\alpha>0$, we can define a continuous bounded weakly almost periodic function $\phi_\alpha\colon G\til \R$ by
$$
\phi_\alpha(g)=2-\min\Big\{1, \max\big\{\frac{d(g,1)}\alpha, 2\big\}\Big\}.
$$
We note that $\phi_\alpha$ has $d$-bounded support, while $\phi_\alpha\equiv 1$ on $D_\alpha$, thus verifying the conditions of Theorem \ref{wap refl}.

\begin{exa}
Since, by Example \ref{banach2},  the additive group $(X,+)$ of a Banach space $(X,\norm\cdot)$ is quasi-isometric to $(X,\norm\cdot)$ itself, these provide examples of metrisable groups admitting metrically proper affine isometric actions on Banach spaces with various types of geometry.

For example, $c_0$ does not admit a coarse embedding into a reflexive Banach space \cite{kalton}  and thus cannot have a metrically proper affine isometric action on a reflexive Banach space. 

Also, by results of M. Mendel and A. Naor \cite{naor}, $L^q$ does not embed coarsely into $L^p$, whenever $\max \{2,p\}<q<\infty$. Thus, $L^q$ cannot have a metrically proper affine isometric action on $L^p$ either. But, being super-reflexive, its shift-action on itself is a metrically proper affine isometric action on a super-reflexive space.
\end{exa}

\section{Open problems}

\begin{prob}
Suppose $M$ is a compact surface of genus $g\geqslant 1$. Does the group  of orientation preserving homeomorphisms, ${\rm Homeo}^+(M)$, have the local property (OB)? Does the identity component ${\rm Homeo}^+_0(M)$ have property (OB) relative to ${\rm Homeo}^+(M)$? We note that, if the latter holds, then ${\rm Homeo}^+(M)$ would be quasi-isometric to the mapping class group $\ku M(M)={\rm Homeo}^+(M)/{\rm Homeo}^+_0(M)$.
\end{prob}

\begin{prob}
Do diffeomorphism groups of smooth manifolds have well-defined quasi-isometry type? Cf. the results of \cite{brandenburgsky}, in which certain right-invariant metrics on groups of measure-preserving diffeomorphisms are shown to be unbounded.
\end{prob}

\begin{prob}
Is there an analogue of the Guentner--Kaminker result \cite{GK} valid for general metrisable groups? I.e., if a metrisable group $G$ admits a maximal metric and an affine isometric action on a Hilbert space with a cocycle growing faster that the square root of the distance, does it follow that $G$ is amenable? 
\end{prob}

\begin{prob}
Let $M$ be an approximately finite-dimensional von Neumann algebra, whence its unitary subgroup $U(M)$ is approximately compact \cite{harpe}. Does $U(M)$ have property (OB) or the local property (OB)? If so, on what kinds of Banach spaces does $U(M)$ act  metrically properly by affine isometric transformations?
\end{prob}

\begin{prob}
Find necessary and sufficient conditions for a metrisable group to have a metrically proper continuous affine isometric action on a reflexive Banach space. In particular, if $G$ admits such an action, what can be said about the weakly almost periodic functions on $G$?
\end{prob}

\begin{prob}
Does the isometry group ${\rm Isom}(\Q\U)$ of the rational Urysohn metric space have a continuous affine isometric action on a reflexive space without a fixed point?
\end{prob}

\begin{prob}
More generally, does a non-Archimedean Polish group have property (OB) if and only if all of its affine isometric actions on reflexive spaces fix a point? If $G$ has property (OB), the existence of such a fixed point follows immediately from the fixed point theorem of C. Ryll-Nardzewski \cite{ryll}.
\end{prob}

\begin{prob}
Suppose $\bf M$ is the countable atomic model of an $\omega$-stable theory $T$. Does ${\rm Aut}(\bf M)$ have the local property (OB)? We note that, if this is  the case, then ${\rm Aut}(\bf M)$ admits a metrically proper continuous affine isometric action on a reflexive Banach space.
\end{prob}




\begin{thebibliography}{99}

\bibitem{maurey}I. Aharoni, B. Maurey, B.S. Mityagin, {\em Uniform embeddings of metric spaces and of Banach spaces into Hilbert spaces}, Israel J. Math. 52 (1985), no. 3,  251--265.

\bibitem{albiac}F. Albiac and N. Kalton, {\em Topics in Banach space theory}, Springer Verlag, NY, 2006.

\bibitem{arens}R. F. Arens and J. Eells, Jr., {\em On embedding uniform and topological spaces}, Pacific J. Math. 6 (1956), 397--403.

\bibitem{furman}U. Bader, A. Furman, T. Gelander and N. Monod, {\em Property (T) and rigidity for actions on Banach spaces}, Acta Math. 198 (2007), no. 1, 57--105.
  
\bibitem{BCV} M. E. B. Bekka, P.-A. Ch\'erix and A. Valette, {\em Proper affine isometric actions of amenable groups}, Novikov conjectures, index theorems and rigidity, Vol. 2 (Oberwolfach 1993), 1--4, London Math. Soc. Lecture Note Ser., 227, Cambridge Univ. Press, Cambridge, 1995.


\bibitem{henson}I. Ben Yaacov, A. Berenstein, C. W. Henson, A. Usvyatsov, {\em  Model theory for metric structures},  Model theory with applications to algebra and analysis. Vol. 2, 315--427, London Math. Soc. Lecture Note Ser., 350, Cambridge Univ. Press, Cambridge, 2008.

\bibitem{ben yaacov}I. Ben Yaacov, A. Berenstein and S. Ferri, {\em Reflexive representability and stable metrics}, Math. Z. 267 (2011), 129-138.

\bibitem{lindenstrauss} I. Benyamini and J. Lindenstrauss, {\em Geometric nonlinear functional analysis. Volume 1}, American Mathematical Society, 2000. 


\bibitem{bergman}G. M. Bergman, {\em Generating infinite symmetric groups}, Bull. London Math. Soc. 38 (2006), 429--440.

\bibitem{birkhoff}G. Birkhoff, {\em A note on topological groups}, Compositio Math. 3 (1936), 427--430.


\bibitem{brandenburgsky}M. Brandenbursky and E. Shelukhin, {\em On the large-scale geometry of the $L^p$-metric on the symplectomorphism group of the two-sphere},  Arxiv:1304.7037.


\bibitem{bretagnolle}J. Bretagnolle, D. Dacunha-Castelle, and J.-L. Krivine, {\em Lois stables et espaces $L^p$}, Ann. Inst. H. Poincar\'e Sect. B (N.S.) 2 (1965/1966), 231--259 (French).



\bibitem{BG}N. Brown and E. Guentner, {\em Uniform embedding of bounded geometry spoaces into reflexive Banach space}, Proc. Amer. Math. Soc. 133 (2005), no. 7, 2045--2050.




\bibitem{vershik}P. J. Cameron and A. M.  Vershik, {\em Some isometry groups of the Urysohn space}, Ann. Pure Appl. Logic 143 (2006), no. 1-3, 70--78. 


\bibitem{CCJJV} P.-A. Ch\'erix, M. Cowling, P. Jolissant, P. Julg and A. Valette, {\em Groups with the Haagerup property: Gromov's a-T-menability}, Basel, Birkh\"auser, 2001.

\bibitem{tessera}Yves de Cornulier, Romain Tessera and Alain Valette, {\em Isometric group actions on Hilbert spaces: growth of cocycles},  GAFA, Geom. Funct. Anal. 17 (2007), 770--792.

\bibitem{harpe}Y. de Cornulier and P. de la Harpe, {\em Metric geometry of locally compact groups}, book in preparation.


\bibitem{dfjp} W. J. Davis, T. Figiel, W. B. Johnson and A. Pe\l czy\'nski, {\em Factoring weakly compact operators}, J. Funct. Anal. 17 (1974), 311--327.


\bibitem{enflo} P. Enflo, {\em Banach spaces which can be given an equivalent uniformly convex norm}, Israel J. Math. 13 (1972), 281--288.



\bibitem{fabian}M. Fabian, P. Habala, P. H\'ajek, V. Montesinos and V.  Zizler, {\em  Banach space theory. The basis for linear and nonlinear analysis},  CMS Books in Mathematics/Ouvrages de Math\'ematiques de la SMC. Springer, New York, 2011.


\bibitem{glasner} E. Glasner, {\em The group $Aut(m)$ is Roelcke precompact},  Canad. Math. Bull. 55 (2012), no. 2, 297--302.



\bibitem{grothendieck}A. Grothendieck, {\em Crit\`eres de compacit\'e dans les espaces fonctionnels g\'en\'eraux}, Amer. J. Math.  74 (1952), 168--186.


\bibitem{GK}E. Guentner and J. Kaminker, {\em Exactness and uniform embeddability of discrete groups}, J. London Math. Soc. 70 (2004), 703--718.


 
\bibitem{haagerup} U. Haagerup, {\em An example of a nonnuclear $C^*$-algebra, which has the metric approximation property}, Invent. Math. 50 (1978), 279--293.

\bibitem{haagerup-affine} U. Haagerup and A. Przybyszewska, {\em Proper metrics on locally compact groups, and proper affine isometric actions on Banach spaces}, arXiv:0606794.

\bibitem{harpe} Harpe, P. de la, {\em Moyennabilit\'e du groupe unitaire et propri\'et\'e P de Schwartz des alg\`ebres de von Neumann}, (French) Alg\`ebres d'op\'erateurs (S\'em., Les Plans-sur-Bex, 1978), pp. 220--227, Lecture Notes in Math., 725, Springer, Berlin, 1979.

\bibitem{johnson}W. B. Johnson, J. Lindenstrauss and G. Schechtman, {\em Banach spaces determined by their uniform structure}, Geom. Funct. Anal. 6 (1996), 430--470. 

\bibitem{kakutani}S. Kakutani, {\em \"Uber die Metrisation der topologischen Gruppen}, Proc. Imp. Acad. Tokyo 12 (1936), 82--84. Selected Papers, Vol. 1, Robert R. Kallman Editor, Birkh\"{a}user (1986), 60--62.

\bibitem{kodaira}S. Kakutani and K. Kodaira, \"Uber das Haarsche Mass in der lokal bikompakten Gruppe, Proc. Imp. Acad. Tokyo 20 (1944), 444--450. Selected Papers, Vol. 1, Robert R. Kallman Editor, Birkh\"auser (1986), 68--74.



\bibitem{kalton} N. J. Kalton, {\em Coarse and uniform embeddings into reflexive spaces}, Quart. J. Math. 58 (2007), no. 3, 393--414.

\bibitem{kalton2} N. J. Kalton, {\em The non-linear geometry of Banach spaces}, Rev. Mat. Complut.
21 (2008), no. 1, 7--60.

\bibitem{kpt} A. S. Kechris, V. G. Pestov and S. Todorcevic, {\em Fra\"iss\'e
limits, Ramsey theory, and topological dynamics of automorphism
groups},   Geom. Funct. Anal. 15  (2005),  no. 1, 106--189.



\bibitem{turbulence}A. S. Kechris and C. Rosendal, {\em Turbulence, amalgamation and generic automorphisms of homogeneous structures}, Proc. Lond. Math. Soc. (3) 94 (2007), no. 2, 302--350.


\bibitem{klee}V. L. Klee Jr., {\em Invariant metrics in groups (solution of a problem of Banach)} 
Proc. Amer. Math. Soc. 3 (1952) 484--487. 


\bibitem{KM}J.-L. Krivine and B. Maurey, {\em Espaces de Banach stables}, Israel J. Math.   39 (1981), no. 4, 273--295.



\bibitem{megrelishvili1}M. G. Megrelishvili, {\em Every semitopological semigroup compactification of the group $H_+[0,1]$ is trivial}, Semigroup Forum 63 (2001), no. 3, 357--370.

\bibitem{megrelishvili2}M. G. Megrelishvili, {\em Operator topologies and reflexive representability}, Nuclear groups and Lie groups (Madrid, 1999), Res. Exp. Math., vol. 24, Heldermann, Lemgo, 2001, 197--208.

\bibitem{naor}M. Mendel and A. Naor, {\em Metric cotype}, Ann. Math. 168 (2008), 247--298.


\bibitem{milnor}J. Milnor, {\em A note on curvature and fundamental group},  J. Differential Geometry 2 1968 1--7.

\bibitem{moore}E. H. Moore, {\em  On properly positive Hermitian matrices}, Bull. Am. Math. Soc. 23 (1916), no.  59, 66--67.


\bibitem{nowak}P. W. Nowak and G. Yu, {\em  Large scale geometry},  EMS Textbooks in Mathematics. European Mathematical Society (EMS), ZŸrich, 2012.

\bibitem{pestov}V. Pestov, {\em A theorem of Hrushovski--Solecki--Vershik applied to uniform and coarse embeddings of the Urysohn metric space}, Topology and its Applications 155 (2008), no. 14, 1561--1575.



\bibitem{pisier}G. Pisier, {\em Martingales with values in uniformly convex spaces}, Israel J. Math.20 (1975), 326--350.


\bibitem{raynaud}Y. Raynaud, {\em Espaces de Banach superstables, distances stables et hom\'eomorphismes uniformes}, (French. English summary) [Superstable Banach spaces, stable distances and uniform homeomorphisms] 
Israel J. Math. 44 (1983), no. 1, 33--52.

\bibitem{ricard}E. Ricard and C. Rosendal, {\em On the algebraic structure of the unitary group}, Collect. Math. 58 (2007), no. 2, 181--192.

\bibitem{roe}J. Roe, {\em  Lectures on coarse geometry},  University Lecture Series, 31. American Mathematical Society, Providence, RI, 2003.




\bibitem{OB}C. Rosendal, {\em A topological version of the Bergman property}, Forum Mathematicum 21 (2009), no. 2, 299--332.

\bibitem{RZ}C. Rosendal, {\em  Finitely approximable groups and actions part I: The Ribes-Zalesski\u\i{} property},  J. Symbolic Logic 76 (2011), no. 4, 1297--1306.

\bibitem{global-local}C. Rosendal, {\em Global and local boundedness of Polish groups}, to appear in Indiana Math. J.

\bibitem{large scale}C. Rosendal, {\em Large scale geometry of automorphism groups}, preprint.



\bibitem{ryll}C.  Ryll-Nardzewski, {\em Generalized random ergodic theorems and weakly almost periodic functions}, 
Bull. Acad. Polon. Sci. S\'er. Sci. Math. Astronom. Phys. 10 (1962), 271--275.

\bibitem{shtern} A. I. Shtern, {\em Compact semitopological semigroups and reflexive representability of topological groups}, Russian Journal of Mathematical Physics 2 (1994), no. 1, 131--132.


\bibitem{struble}R. A. Struble, {\em Metrics in locally compact groups}, Compositio
Mathematica 28 (1974), 217--222.


\bibitem{svarc}A. S. \v{S}varc, {\em A volume invariant of coverings}, (Russian) 
Dokl. Akad. Nauk SSSR (N.S.) 105 (1955), 32--34. 

\bibitem{tsankov}T. Tsankov, {\em Unitary representations of oligomorphic groups}, Geom. Funct. Anal. 22 (2012), no. 2, 528--555.



\bibitem{tsirelson} B. S. Tsirelson, {\em  It is impossible to imbed $\ell_p$ or $c_0$ into an arbitrary Banach space},  (Russian) Funkcional. Anal. i Prilo\v{z}en 8 (1974), no. 2, 57--60.


\bibitem{uspenski}V. V. Uspenski\u\i, {\em A universal topological group with a countable base}, Funct. Anal. and its Appl. 20 (1986), 160--161.

\bibitem{weaver} N. Weaver, {\em Lipschitz algebras}, World Scientific Publishing Co., Inc., River Edge, NJ, 1999. 



\end{thebibliography}
\end{document}